\author{Micah J. Leamer}  
\title{Torsion and tensor products over domains and specializations to semigroup rings}
\begin{document}

\theoremstyle{plain}
\newtheorem{prop}{Proposition}[section]
\newtheorem{thm}[prop]{Theorem}
\newtheorem{lem}[prop]{Lemma}
\newtheorem{cor}[prop]{Corollary}

\theoremstyle{definition}
\newtheorem{remark}[prop]{Remark}
\newtheorem{notation}[prop]{Notation}
\newtheorem{defn}[prop]{Definition}
\newtheorem{alg}[prop]{Algorithm}
\newtheorem{question}[prop]{Question}
\newtheorem{ex}[prop]{Example}
\newtheorem{exs}[prop]{Examples}
\newtheorem{conj}[prop]{Conjecture}
\newtheorem{HWconj}[prop]{Conjecture \cite[473--474]{Huneke}}

\renewcommand{\geq}{\geqslant}
\renewcommand{\leq}{\leqslant}

\newcommand{\B}[1]{\mathcal{B}(#1)}
\newcommand{\BH}[1]{\mathcal{B}_X(H(#1))}
\newcommand{\MB}[1]{\operatorname{Max\mathcal{B}}(#1)}
\newcommand{\Pk}[2]{\operatorname{Peak}({#1},{#2})}
\newcommand{\GZ}[1]{\operatorname{Gen}(#1)}
\newcommand{\G}[1]{\operatorname{Gen}(#1)}
\newcommand{\len}[1][R]{\lambda_{#1}}
\newcommand{\Ext}[4][R]{\operatorname{Ext}_{#1}^{#2}(#3,#4)}	
\newcommand{\Hom}[3][R]{\operatorname{Hom}_{#1}(#2,#3)}	
\newcommand{\Tor}[4][R]{\operatorname{Tor}^{#1}_{#2}(#3,#4)}
\newcommand{\LC}{\operatorname{H}}
\newcommand{\T}{\operatorname{T}}
\newcommand{\E}{\operatorname{E}}

\newcommand{\coker}{\operatorname{coker}}
\newcommand{\comp}[1]{\widehat{#1}}
\newcommand{\m}{\mathfrak{m}}
\newcommand{\n}{\mathfrak{n}}
\newcommand{\kspan}{k\operatorname{-span}}
\newcommand{\cmp}{\operatorname{comp}}
\newcommand{\supp}{\operatorname{supp}}
\newcommand{\mspec}{\operatorname{m-spec}}
\newcommand{\im}{\operatorname{Im}}
\newcommand{\depth}{\operatorname{depth}}
\newcommand{\Ap}{\operatorname{Ap}}
\newcommand{\Hilb}{\operatorname{H}}

%\keywords{torsion, tensors, numerical semigroup,}
%\subjclass[2000]{Primary 13D07, 13E10; Secondary 13B35, 13E05}

\begin{abstract} Let $R$ be a commutative Noetherian domain, and let $M$ and $N$ be finitely generated $R$-modules.  We give new criteria for determining when $M\otimes_RN$ has torsion. We also give constructive formulas for producing a module in the isomorphism class of $\T(M\otimes_RN)$, where $\T(-)$ gives the torsion submodule of a module. In some cases we determine bounds on the length and minimal number of generators of $\T(M\otimes_RN)$. We focus on the case where $R$ is a numerical semigroup ring with the goal of making progress on the Huneke-Wiegand Conjecture.
\end{abstract}

\maketitle

 Throughout $R$ will be a commutative domain. Additionally $M$ and $N$ will denote non-zero finitely generated $R$-modules. The torsion submodule of $M$ is the set 
$\T(M)=\{x\in M|\ rx=0 \textrm{ for some non-zero }r\in R\}$. 
We say that $M$ is torsion-free provided that $\T(M)=0$. Otherwise we say that $M$ has torsion. In this paper we will use the convention that local rings are Noetherian.

It is often the case that the tensor product of two modules has torsion. Over some classes of rings it has been shown that the only cases where $M\otimes_RN$ is torsion free are trivial.  In particular, when $R$ is either a regular local ring or a one-dimensional local hypersurface domain, then $M\otimes_RN$ is torsion-free if and only if one of $M$ or $N$ is free and the other is torsion-free; see  \cite{Au}, \cite{Li} and \cite{Huneke}. On the other hand simple examples show that this property does not in general extend to complete intersection domains of codimension greater than one.  For instance when $R=k[t^4,t^5,t^6]$  the module $(t^4R+t^5R)\otimes_R(t^4R+t^6R)$ is torsion-free.

We determine criteria on $M$, $N$ and $R$ that often allow one to predict whether $M\otimes_RN$ has torsion.  In some special cases we give an explicit formula for constructing $\T(M\otimes_RN)$ up to isomorphism, and we determine bounds on the length and minimal number of generators of $\T(M\otimes_RN)$.  In other cases we show how to reduce determining the length of $\T(M\otimes_RN)$ to an equivalent number-theoretic problem involving numerical semigroups. 

%It is trivial to show that $M\otimes_RM$ has torsion whenever $M$ is not free.  Similarly 
When $R$ is local and integrally closed  and $M$ is a torsion-free $R$-module, we have that $M\otimes_RM^*$ is reflexive if and only if $M$ is free \cite[3.3]{Au}, where $M^*:=\Hom{M}{R}$.  
C. Huneke and R. Wiegand have conjectured \cite[473--474]{Huneke} that if $R$ is a local domain such that $M$ and  $M\otimes_RM^*$ are Maximal Cohen-Macaulay (MCM), then $M$ is free. Furthermore O. Celikbas and R. Takahashi have shown that if the Huneke-Wiegand conjecture holds when $R$ is one-dimensional and Gorenstein, then it would hold for all Gorentein domains, and the Auslander-Reiten conjecture would also hold over Gorenstein domains \cite[Proposition 5.6]{Celikbas}.  The Auslander-Reiten Conjecture claims that, if $\Ext{i}{M}{R\oplus M}=0$ for all $i>0$, then $M$ is projective. Over a one-dimensional Gorenstein domain a module is reflexive if and only if it is torsion-free, and also if and only if it is MCM. In particular we would like to show that if $R$ is a one-dimensional local Gorenstein domain, and $M$ is torsion-free but not free, then $M\otimes_RM^*$ has torsion.

%Arguably the simplest class of rings which are not integrally closed are numerical semigroup rings.  

Trying to make progress on the Huneke-Wiegand conjecture was the original motivation for this research.
When $R$ is any commutative domain and $I$ is a two generated ideal of $R$, we obtain the isomorphism $\T(I\otimes I^*)\cong (I^2)^{-1}/(I^{-1})^2$, where $I^{-1}:=(R:_KI)$ and $K$ is the field of fractions; see Lemma \ref{lem:iso2gen} for details. 
In particular given a numerical 
semigroup $S$ and a field $k$ the following statements are equivalent; see Proposition \ref{prop:arithmetic} for details.
\begin{enumerate}
\item  The Huneke-Wiegand Conjecture holds for two-generated monomial ideals over $k[S]$.
\item For every $n$ in $\mathbb{N}\setminus S$ there exists a set of the form $\{x,x+n,x+2n\}\subset S$, which does not factor as the sum of two sets of the form $\{y,y+n\}\subset S$ and $\{z,z+n\}\subset S$.
\end{enumerate}
In \cite{Sanchez} P. Garc\'ia-S\'anchez and the author use this equivalence to show that two-generated monomial ideals over complete intersection numerical semigroup rings satisfy the Huneke-Wiegand Conjecture.

Much of the previous work related to torsion and tensor products has been focused on trying to determine when 
$\T(M\otimes N)\neq 0$. However, we develop tools that allow us to produce bounds on the size of $\T(M\otimes_RN)$ in some special cases.  For instance when $R$ is a hypersurface numerical semigroup ring with non-principal monomial ideals $I$ and $J$, we show that $\len(\T(I\otimes_R J))\geq \frac{1}{2}\mu(I)\mu(J)$. Here $\len(-)$ gives the length of a module.  Lastly, when $R$ is a hypersurface numerical semigroup ring with monomial ideal $J$, we show that $J\otimes_RJ^*$ has a minimal generating set such that $2\mu(J)-2$ of the generators are torsion elements.  Here $\mu(-)$ gives the minimal number of generators of a module.

\section{Torsion over domains}
Let $I$ be an ideal of $R$, let $M$ be a finitely generated torsion-free $R$-module and let $\pi_{IM}:I\otimes_{R}M\to IM$  be the $R$-module homomorphism defined by setting $\pi_{IM}(r\otimes x)=rx$ for all $r$ in $I$ and $x$ in $M$.  When $I$ and $M$ are unambiguous we will simply write $\pi$ for $\pi_{IM}$. In general $\ker(\pi_{IM})=\T(I\otimes_R M)$. Since $IM$ is torsion-free it follows that  $\T(I\otimes_R M)\subseteq\ker(\pi_{IM})$. Conversely, given $x$ in $I$ and $f=\sum_i x_i\otimes y_i$ in $\ker(\pi_{IM})$ we have $xf=\sum_i xx_i\otimes y_i=x\otimes\sum_i x_iy_i=0$.

A local ring is said to be analytically irreducible if its completion is a domain.  An analytically irreducible local ring is said to be residually rational if it has the same residue field as its integral closure. Note that in this case the integral closure would necessarily be local by \cite[6]{Katz}.

%\begin{defn}\label{def:delta}
%Let $R$ be a domain. Let $M$ be a finitely generated torsion-free $R$-module.   Let $I,P$ and $Q$ be ideals such that $P+Q=I$. We define a the map 
%\[
%\delta:\T(I\otimes_{R} M)\to \frac{(P M)\cap(Q M)}{(P\cap Q) M}
%\]
%such that
%$\sum_i p_i\otimes f_i+\sum_jq_j\otimes g_j\mapsto \sum_ip_i f_i$,
%%\pmod{(P\cap Q)M} 
%%when $\sum_i p_i\otimes f_i+\sum_jq_j\otimes g_j\in\T(I\otimes_{R} M)$
%where $p_i\in P$ and $q_j\in Q$. 
%\end{defn}
%
%In the proof of the next lemma we show that $\delta$ is 
%%the connecting map from the Snake Lemma for an exact diagram and is therefore 
%a well defined homomorphism. 

%The next result was mentioned without proof in \cite{Kunz}. We include a proof here for completeness.

\begin{remark} \label{lem:KrullKatz}
Let $R$ be a one-dimensional Noetherian local domain. Then $R$ is analytically irreducible if and only if its integral closure $\overline{R}$ is finitely generated as an $R$-module and is a discrete valuation ring (DVR).

If $R$ is analytically irreducible then $\comp{R}$ is reduced; hence by \cite{Krull}, $\overline{R}$ is a finite $R$-module.
By \cite[6]{Katz} the number of minimal primes in 
$\comp{R}$ equals the number of maximal ideals in $\overline{R}$. Thus if $R$ is analytically irreducible, then $\overline{R}$ is a one-dimensional local integrally closed domain; hence by \cite[Proposition 9.2]{Atiyah}, $\overline{R}$ is a DVR. Conversely if $R$ is not analytically irreducible, then $\overline{R}$ is not local and therefore not a DVR.  
\end{remark}

\begin{lem} \label{prop:moduleOneWay}
Let $R$ be a domain. Let $M$ be a finitely generated torsion-free $R$-module.   Let $I,P$ and $Q$ be finitely generated  ideals such that $P+Q=I$. Then the sequence
\[
\xymatrix{
\T(P\otimes_{R} M)\oplus\T(Q\otimes_{R} M)
\ar[rr]^{\ \ \ \ \ \ \ [a,b]\mapsto a+b} & & 
\T(I\otimes_{R} M) \ar[r]^{\delta} &
\frac{(PM)\cap(QM)}{(P\cap Q) M}\ar[r] 
& 0}.
\] 
is exact.
% Here $\delta(\sum_i p_i\otimes f_i+\sum_jq_j\otimes g_j)=\sum_ip_i f_i$,
%\pmod{(P\cap Q)M} 
%when $\sum_i p_i\otimes f_i+\sum_jq_j\otimes g_j\in\T(I\otimes_{R} M)$
%where $p_i\in P$ and $q_j\in Q$.
 In particular, if $I=(f,g)$, then $\displaystyle\T(I\otimes_RM)\cong\frac{(fM)\cap(gM)}{(fR\cap gR)M}$.
\end{lem}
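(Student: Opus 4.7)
The plan is to derive the sequence via the snake lemma applied to a suitable commutative square. Begin with the Mayer--Vietoris short exact sequence of ideals
\[
0 \longrightarrow P \cap Q \xrightarrow{\,x \mapsto (x,-x)\,} P \oplus Q \xrightarrow{\,(p,q) \mapsto p+q\,} I \longrightarrow 0,
\]
and tensor with $M$ to obtain a right exact top row $(P \cap Q) \otimes_R M \to (P \otimes_R M) \oplus (Q \otimes_R M) \to I \otimes_R M \to 0$. Alongside, consider the short exact sequence
\[
0 \longrightarrow PM \cap QM \longrightarrow PM \oplus QM \longrightarrow IM \longrightarrow 0,
\]
which is exact because $IM = PM + QM$. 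Connect the two rows by the multiplication maps $\pi$: the middle vertical is $\pi_{PM} \oplus \pi_{QM}$, the right vertical is $\pi_{IM}$, and the left vertical $\alpha$ factors as $\pi_{(P \cap Q)M}$ followed by the inclusion $(P \cap Q)M \hookrightarrow PM \cap QM$. Commutativity is immediate on elementary tensors.

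The snake lemma then produces the six-term exact sequence
\[
\ker \alpha \to \ker(\pi_{PM} \oplus \pi_{QM}) \to \ker \pi_{IM} \xrightarrow{\delta} \coker \alpha \to \coker(\pi_{PM} \oplus \pi_{QM}) \to \coker \pi_{IM}.
\]
Using the identification $\ker \pi = \T(-\otimes_R M)$ from the paragraph preceding the lemma, the middle and third kernels become $\T(P \otimes_R M) \oplus \T(Q \otimes_R M)$ and $\T(I \otimes_R M)$ respectively. Surjectivity of every $\pi$ onto its image kills both right-hand cokernels, while the factorization of $\alpha$ gives $\coker \alpha = (PM \cap QM)/(P \cap Q)M$. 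Truncating the six-term sequence from $\ker(\pi_{PM} \oplus \pi_{QM})$ onward recovers exactly the claimed sequence; the first map, being the restriction of the top horizontal map, acts by $[a, b] \mapsto a + b$ after identifying each tensor product with its image in $I \otimes_R M$.

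For the ``in particular'' statement, set $P = (f)$ and $Q = (g)$. Since $f, g \neq 0$ and $M$ is torsion-free, multiplication by $f$ is injective on $M$, so $P \otimes_R M \cong M$ is torsion-free, and likewise for $Q$. The first term of the sequence therefore vanishes, and $\delta$ becomes an isomorphism, yielding $\T(I \otimes_R M) \cong (fM \cap gM)/(fR \cap gR)M$.

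The argument is essentially a diagram chase, so I anticipate no conceptual obstacle. The one delicate point is the factorization of $\alpha$ through $(P \cap Q)M$ rather than directly into $PM \cap QM$; without this factorization the snake lemma would not yield the specific cokernel term $(PM \cap QM)/(P \cap Q)M$ that appears in the statement.
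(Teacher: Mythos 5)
Your proof is correct and follows essentially the same route as the paper: the Mayer--Vietoris sequence $0\to P\cap Q\to P\oplus Q\to I\to 0$ tensored with $M$, mapped by the multiplication maps onto $0\to PM\cap QM\to PM\oplus QM\to IM\to 0$, with the snake lemma producing the stated sequence and the two-generator case following from torsion-freeness of $fR\otimes_RM\cong M$. No gaps.
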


\begin{proof} 
We have the following exact sequence:
\[
\xymatrix{0\ar[r] & P\cap Q\ar[rr]^{f\mapsto [f,-f]} & &
P\oplus Q\ar[rr]^{[p,q]\mapsto p+q} & & I \ar[r] & 0}
\]
Applying $(-)\otimes_{R}M$ we get the second row of the following commutative exact diagram:
\[
\xymatrix@=1.8em{ 
& &\T(P\otimes_{R} M)\oplus \T(Q\otimes_{R} M)\ar @{^{(}->}[d] &\T(I\otimes_{R} M)\ar @{^{(}->}[d]\\
& (P\cap Q)\otimes_{R} M\ar[d]\ar[r] & (P\otimes_{R} M)\oplus (Q\otimes_{R}M)\ar[d]^{\pi_{P,M}\oplus \pi_{Q,M}} \ar[r] & I\otimes_{R} M \ar[d]^{\pi_{IM}} \ar[r] & 0\\
0 \ar[r] & (P M)\cap(Q M) \ar[r]^{f\mapsto [f,-f]} \ar @{->>}[d] & P M\oplus Q M \ar[d] \ar[r]^{[p,q]\mapsto p+q} & IM \ar[r] \ar[d] & 0\\
& \frac{(PM)\cap(QM)}{(P\cap Q)M} & 0 & 0}
\]
%An easy diagram chase shows that the connecting homomorphism from the Snake lemma is the map $\delta$. 
Thus by the Snake Lemma we get the desired exact sequence. Note that the map $\delta$ in the lemma is the connecting map from the Snake Lemma. 
\end{proof}

Let $R$ be a local domain with maximal ideal $\m$. Suppose there exists a fixed $t$ in $\overline{R}$ such that $\m=(t^{n_1},\hdots ,t^{n_e})$. Then $I$ is said to be a fractional \emph{monomial} ideal whenever $I=(t^{z_1},\hdots,t^{z_h})$ for some integers $z_1,\hdots ,z_h$.  

\begin{thm} \label{prop:moduleOneWay2}
Suppose that $R$ is either a $\mathbb{Z}^n$ standard graded $k$-subalgebra of $k[x_1,\hdots,x_n]$ or a one-dimensional analytically irreducible residually rational domain with maximal ideal $\m=(t^{n_1},\hdots ,t^{n_e})$, for some $t$ in $\overline{R}$. Let $I=(a_1,\hdots,a_m)$ and $J$ be finitely generated monomial fractional ideals, where each $a_i$ is a monomial.  Let $\mathbf{e}_1,\hdots ,\mathbf{e}_m$ be a basis for $R^m$.  Then 
$\displaystyle \T(I\otimes_RJ)\cong\frac{\sum_{i<j}(a_iJ\cap a_jJ)(\mathbf{e}_i-\mathbf{e}_j)}{\sum_{i<j}(a_iR\cap a_jR)J(\mathbf{e}_i-\mathbf{e}_j)}$.
\end{thm}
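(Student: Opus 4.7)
The plan is to compute $\T(I \otimes_R J)$ as a $\operatorname{Tor}$ module and then identify it with the claimed quotient. Applying $-\otimes_R J$ to $0 \to I \to R \to R/I \to 0$ gives $\Tor{1}{R/I}{J} \hookrightarrow I \otimes_R J \to J$, and since $J$ is torsion-free the image of $\Tor{1}{R/I}{J}$ equals $\ker(\pi_{IJ}) = \T(I \otimes_R J)$. Using the partial free resolution $K \to R^m \xrightarrow{\phi} R \to R/I \to 0$ with $\phi(\mathbf{e}_i) = a_i$ and $K = \ker\phi$, tensoring with $J$ presents
\[
\T(I \otimes_R J) \;\cong\; \frac{\{(b_1,\ldots,b_m) \in J^m : \sum_i a_i b_i = 0\}}{\im(K \otimes_R J \to J^m)}.
\]

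The core technical step is a binomial-syzygy analysis valid in both ring settings. In each case monomial ideals have a $k$-basis of monomials, and working degree-by-degree (via the $\mathbb{Z}^n$-multigrading in the graded case, or via the $t$-adic valuation coming from $\overline R$ in the one-dimensional residually rational case), a relation $\sum_i a_i b_i = 0$ among monomial terms reduces in each fixed degree to a scalar relation $\sum \lambda_{i,\alpha} = 0$, whose $k$-kernel is spanned by pairwise differences. I expect this to show that the numerator $\{(b_1,\ldots,b_m) \in J^m : \sum a_ib_i=0\}$ is generated as an $R$-module by the ``binomial'' elements $b_i\mathbf{e}_i - b_j\mathbf{e}_j$ with $i<j$, $b_i,b_j \in J$, and $a_ib_i = a_jb_j$, and that $K$ is generated by $r\mathbf{e}_i - s\mathbf{e}_j$ with $ra_i = sa_j \in a_iR \cap a_jR$. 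Consequently $\im(K \otimes_R J \to J^m)$ is generated by the elements $rb\mathbf{e}_i - sb\mathbf{e}_j$ for such $r,s$ and $b \in J$.

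To match with the claimed quotient, I would use that, since $R$ is a domain, each $z \in a_iJ \cap a_jJ$ (respectively $w \in (a_iR \cap a_jR)J$) uniquely determines elements $b_i := z/a_i$ and $b_j := z/a_j$ in $J$ with $a_ib_i = a_jb_j = z$. The $R$-linear map $z(\mathbf{e}_i - \mathbf{e}_j) \mapsto (z/a_i)\mathbf{e}_i - (z/a_j)\mathbf{e}_j$ is well-defined on the $R$-submodule $(a_iJ \cap a_jJ)(\mathbf{e}_i - \mathbf{e}_j)$ of $R^m$ and embeds it isomorphically into $J^m$. Summing over $i<j$ yields $R$-module isomorphisms between $\sum_{i<j}(a_iJ \cap a_jJ)(\mathbf{e}_i - \mathbf{e}_j)$ and the numerator above, and between $\sum_{i<j}(a_iR \cap a_jR)J(\mathbf{e}_i - \mathbf{e}_j)$ and the denominator above, which induce the desired isomorphism on the quotient. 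The main obstacle is the binomial-syzygy claim itself: one must verify that in these rings every homogeneous relation on a generating set of monomials is a combination of two-term relations, which uses the $\mathbb{Z}^n$-multigrading in the graded case and the DVR structure of $\overline R$ together with the residually rational hypothesis in the one-dimensional case.
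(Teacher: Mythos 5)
Your proposal is correct and follows essentially the same route as the paper: the heart of both arguments is the binomial-syzygy claim that every relation $\sum a_ib_i=0$ on monomial generators decomposes into two-term relations, proved degree-by-degree via the $\mathbb{Z}^n$-grading in the graded case and by an inductive valuation-raising argument (using residual rationality and the conductor) in the local case. The only difference is cosmetic packaging — you compute $\T(I\otimes_RJ)$ as $\Tor[R]{1}{R/I}{J}$ from a presentation of $I$, while the paper runs the Snake Lemma on the tensored exact sequence $0\to\sum_{i<j}(a_iR\cap a_jR)(\mathbf{e}_i-\mathbf{e}_j)\to\bigoplus a_iR\mathbf{e}_i\to I\to 0$ — and these are the same computation under the identification $b_i\mapsto a_ib_i$ you describe at the end.
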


\begin{proof} 
We claim that the sequence
\[
\xymatrix{0\ar[r] & \sum_{i<j}(a_iR\cap a_jR)(\mathbf{e}_i-\mathbf{e}_j) \ar[r]^{\ \ \ \ \ \ \ \ \ \subseteq} &
\sum_{i=1}^m a_iRe_i \ar[r]^{\ \ \ \ \  \gamma} &  I \ar[r] & 0}
\]
is exact, where $\gamma$ is the map that replaces direct sums with addition.
It suffices to show that 
$\ker(\gamma)\subseteq \sum_{i<j}(a_iR\cap a_jR) (\mathbf{e}_i-\mathbf{e}_j)$.

Case 1: Let $R$ be a $\mathbb{Z}^n$ standard graded $k$-subalgebra of $k[x_1,\hdots,x_n]$.  Let $f$ be a non-zero homogeneous element of $\ker(\gamma)$ of degree $w=[w_1,\hdots,w_n]$ in $\mathbb{Z}^n$.  Let $x^w$ denote $\prod_{i=1}^nx_i^{w_i}$. Since $f$ is homogeneous and in $\sum_{i=1}^m a_iRe_i$ there exist $\alpha_i$ in $k$ such that $f=x^w\sum_{i=1}^m \alpha_i\mathbf{e}_i$ where $x^w$ in $a_iR$ whenever $\alpha_i\neq 0$. Also since $f$ is in $\ker(\gamma)$ it follows that $\sum_{i=1}^m\alpha_i=0$. Choose $h$ such that $\alpha_h\neq 0$.  Let $S=\{i|\ \alpha_i\neq 0,\ i\neq h\}$. Then
$f=\sum_{i\in S} \alpha_ix^v(\mathbf{e}_i-\mathbf{e}_h)$ is in $\sum_{i<j}(a_iR\cap a_jR)(\mathbf{e}_i-\mathbf{e}_j)$.

Case 2: Let $R$ be a one-dimensional analytically irreducible residually rational domain with maximal ideal $(t^{n_1},\hdots ,t^{n_e})$.  Let $f$ be a non-zero element of $\ker(\gamma)$. Then $f=\sum_{i=1}^mf_i\mathbf{e}_i$ for some $f_i$ in $a_iR$ with $\sum_{i=1}^mf_i=0$.  Let $v$ be the valuation associated to the valuation ring $\overline{R}$ and let $\n$ be the maximal ideal for $\overline{R}$. There exists $N\gg 0$ such that $\n^N\subset a_iR$ for all $i$. When $v(f_i)>N$ for all $i$, it follows that  
$f$ is in $\sum_{i<j}(a_iR\cap a_jR)(\mathbf{e}_i-\mathbf{e}_j)$.  Let $d=\min\{v(f_i)|\ i=1,\hdots ,m\}$. By induction it suffices to show that there exists $f'=\sum f'_i\mathbf{e}_i$, such that $v(f'_i)>d$ for all $i$ and such that we can write $f'$ as $f-g$ for some $g$ in $\sum_{i<j}(a_iR\cap a_jR) (\mathbf{e}_i-\mathbf{e}_j)$. Let $S=\{i|\ v(f_i)=d\}$. Choose $h$ in $S$. For all $j$ in $S\setminus\{h\}$ there exists a unit $u_j$ in $R$ such that 
$v(f_j-u_jt^d)>d$. We claim that $f'=f-\sum_{j\in S\setminus\{h\}}u_jt^d(\mathbf{e}_h-\mathbf{e}_j)$ has the desired properties. By construction $v(f'_j)>d$ for all $j\neq h$.  As $\sum_{i=1}^mf'_i=0$, we have 
$v(f'_h)\geq\min\{v(f'_j)|\ j\neq h\}>d$, and the claim follows.     

Applying $(-)\otimes_{R}J$ to the exact sequence above we get the second row of the following commutative exact diagram:
\[
\xymatrix@=1.6em{ 
& & 0 \ar[d] &\T(I\otimes_{R} J)\ar @{^{(}->}[d]\\
& \sum_{i<j}(a_iR\cap a_jR)\otimes_{R} J (\mathbf{e}_i-\mathbf{e}_j)\ar[d]\ar[r] & \bigoplus_{i=1}^m a_iR\otimes_{R} J \ar[d] \ar[r] & I\otimes_{R} J \ar[d] \ar[r] & 0\\
0 \ar[r] & \sum_{i=1}^m(a_iJ\cap a_jJ)(\mathbf{e}_i-\mathbf{e}_j)\ar[r] \ar[d] & \bigoplus_{i=1}^ma_iJ \ar @{->>}[d] \ar[r] & IJ \ar[r] \ar[d] & 0\\
& \frac{\sum_{i<j}(a_iJ\cap a_jJ)(\mathbf{e}_i-\mathbf{e}_j)}{\sum_{i<j}(a_iR\cap a_jR)J(\mathbf{e}_i-\mathbf{e}_j)} & 0 & 0}
\]
The desired isomorphism follows from the Snake Lemma.
\end{proof}

\begin{thm} \label{thm:equiv}
Suppose either that $R$ is a $\mathbb{Z}^n$ standard graded $k$-subalgebra of $k[x_1,\hdots,x_n]$ or that $(R,\m)$ is an analytically irreducible residually rational ring with maximal ideal $\m=(t^{n_1},\hdots ,t^{n_e})$ for some $t$ in $\overline{R}$. Let $I$ and $J$ be finitely generated monomial fractional ideals of $R$ and let $I=(a_1,\hdots ,a_h)$ such that each $a_i$ is a monomial. Then the following conditions are equivalent:
\begin{enumerate}
\item \label{equiv1} $\T(I\otimes_RJ)=0;$
\item \label{equiv2} $(P\cap Q)J=PJ\cap QJ$ for all ideals $P$ and $Q$ (not necessarily monomial) such that $P+Q=I$; and 
\item \label{equiv3} $((a_i|\ i\in \mathcal{S})\cap(a_j|\ j\notin \mathcal{S}))J=(a_i|\ i\in \mathcal{S})J\cap(a_j|\ j\notin \mathcal{S})J$\\ for all $\mathcal{S}\subset\{1,\hdots ,h\}$.
\end{enumerate}
\end{thm}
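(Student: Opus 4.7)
The plan is to establish the cycle $(1) \Rightarrow (2) \Rightarrow (3) \Rightarrow (1)$. The first two implications are immediate: $(1) \Rightarrow (2)$ follows from Lemma~\ref{prop:moduleOneWay} applied with $M = J$ (torsion-free as a fractional ideal of a domain), whose exact sequence yields a surjection $\T(I \otimes_R J) \twoheadrightarrow (PJ \cap QJ)/((P \cap Q)J)$ and so forces the right-hand quotient to vanish whenever (1) holds; and $(2) \Rightarrow (3)$ is just specialization of $P$ and $Q$ to $(a_i : i \in \mathcal{S})$ and $(a_j : j \notin \mathcal{S})$, for which $P + Q = I$.

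The substantive direction is $(3) \Rightarrow (1)$. By Theorem~\ref{prop:moduleOneWay2}, $\T(I \otimes_R J) \cong N/D$ with $N = \sum_{i<j}(a_iJ \cap a_jJ)(\mathbf{e}_i - \mathbf{e}_j)$ and $D = \sum_{i<j}(a_iR \cap a_jR)J\,(\mathbf{e}_i - \mathbf{e}_j)$, and it remains to prove $N \subseteq D$. In case 1, both modules are $\mathbb{Z}^n$-graded submodules of $K^h$ (with $K$ the fraction field of $R$), so equality may be checked degree by degree. For each multidegree $w$, put $V_w = \{k : t^w \in a_k J\}$ and form the graph $G_w$ on $V_w$ whose edges $\{i,j\}$ record that $t^w \in (a_iR \cap a_jR)J$. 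A routine computation identifies $N_w$ with the sum-zero hyperplane of $k^{V_w}\cdot t^w$ and $D_w$ with its $k$-subspace spanned by the edge-vectors $t^w(\mathbf{e}_i - \mathbf{e}_j)$; thus $\dim N_w - \dim D_w = c(G_w) - 1$, where $c(G_w)$ is the number of connected components, and $N_w = D_w$ precisely when $G_w$ is connected.

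To verify connectivity from (3), take any non-empty proper $A \subsetneq V_w$ and apply (3) with $\mathcal{S} = A$ (as a subset of $\{1,\ldots,h\}$). Since $A \subseteq \mathcal{S}\cap V_w$ and $V_w\setminus A \subseteq \mathcal{S}^c \cap V_w$ are both non-empty, $t^w$ lies in the right-hand side $(a_i|i\in\mathcal{S})J \cap (a_j|j\notin\mathcal{S})J$; by (3) it also lies in the left-hand side $((a_i|i\in\mathcal{S}) \cap (a_j|j\notin\mathcal{S}))J$, producing a factorization $t^w = t^u t^v$ with $t^u \in a_iR \cap a_jR$ for some $i \in A$ and $j \in \mathcal{S}^c$, and $t^v \in J$. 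Then $\{i,j\}$ is an edge of $G_w$ between $A$ and $V_w \setminus A$, so $G_w$ is connected.

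Case 2 I would handle by reducing to case 1 via the associated graded. Residual rationality makes $\operatorname{Gr}(R)$ with respect to the $\overline{R}$-valuation filtration a $\mathbb{Z}$-graded $k$-subalgebra of $k[t]$ (isomorphic to $k[\Gamma]$ for $\Gamma = v(R\setminus\{0\})$), which fits the case-1 template. Monomial ideals of $R$ are in support-preserving bijection with those of $\operatorname{Gr}(R)$, and condition (3) is a statement purely about these supports, so (3) for $R$ is equivalent to (3) for $\operatorname{Gr}(R)$. Applying case 1 to $\operatorname{Gr}(R)$ yields $\operatorname{Gr}(N) = \operatorname{Gr}(D)$, and since the valuation filtration on the finitely generated $R$-module $N/D$ is separated (Krull's intersection theorem), this forces $N = D$. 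The chief obstacle is to set up the $\operatorname{Gr}$ correspondence carefully; this parallels the valuation induction in the proof of Theorem~\ref{prop:moduleOneWay2} case 2.
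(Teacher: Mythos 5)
Your handling of $(1)\Rightarrow(2)$, $(2)\Rightarrow(3)$, and Case 1 of $(3)\Rightarrow(1)$ matches the paper's proof. The paper likewise reduces to Theorem \ref{prop:moduleOneWay2} and, for each multidegree $w$, builds exactly your graph $G_w$ and uses condition (3) on partitions to show it is connected; where you count dimensions ($\dim N_w-\dim D_w=c(G_w)-1$), the paper exhibits an explicit path from $y$ to $z$ and writes $x^w(\mathbf{e}_y-\mathbf{e}_z)$ as a telescoping sum of edge vectors lying in $D$. That difference is cosmetic.

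Where you genuinely diverge is Case 2, and the divergence introduces a gap. The paper runs the same graph argument with $t^w$ in place of $x^w$, and this works because each $a_iJ\cap a_jJ$ is again a monomial ideal (by Remark \ref{lem:quotient} a fractional ideal containing the monomial ideal with the same value set equals it, and $v(a_iJ\cap a_jJ)=v(a_iJ)\cap v(a_jJ)$ since $a_iJ$, $a_jJ$ are monomial); hence $N$ is generated by the elements $t^w(\mathbf{e}_i-\mathbf{e}_j)$, and it suffices to put each such generator into $D$ --- no grading or leading-form analysis is needed. Your reduction via $\operatorname{Gr}(R)$ breaks at the step ``applying case 1 to $\operatorname{Gr}(R)$ yields $\operatorname{Gr}(N)=\operatorname{Gr}(D)$'': Case 1 gives equality of the \emph{formal} graded analogues $N_{\mathrm{gr}}$ and $D_{\mathrm{gr}}$ built from the value sets, but to identify these with the associated graded modules of $N$ and $D$ you must show that passing to leading forms commutes with the sums and intersections defining $N$ and $D$; a priori $\operatorname{Gr}$ of a sum can strictly contain the sum of the $\operatorname{Gr}$'s because of cancellation among leading terms, and it is exactly the containment $\operatorname{Gr}(N)\subseteq N_{\mathrm{gr}}$ that your argument needs. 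Supplying this is essentially the valuation induction from the proof of Theorem \ref{prop:moduleOneWay2}, Case 2, which you have flagged but not carried out. Running your own Case 1 argument directly on the monomial generators $t^w(\mathbf{e}_i-\mathbf{e}_j)$ closes the gap and makes the $\operatorname{Gr}$ detour unnecessary.
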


\begin{proof}
$(1)\implies(2)$: This follows from Lemma \ref{prop:moduleOneWay}.

$(2)\implies(3)$: This is clear, since (3) is a special case of (2).

${(3)\implies(1)}$: 
%Suppose that $R$ is a one-dimensional analytically irreducible residually rational ring  with maximal ideal $\m=(t^{n_1},\hdots ,t^{n_e})$.  The case where 
Let $R$ be a $\mathbb{Z}^n$ standard graded $k$-subalgebra of $k[x_1,\hdots,x_n]$. 
Fix $w$ in $\mathbb{Z}^n$ and let $V=\{i|\ x^w\in a_iJ\}$. Let $G$ be the graph with vertex set $V$ that contains the edge $ij$ if and only if $x^w$ is in $(a_iR\cap a_jR)J$. Let $\{\mathcal{S},\mathcal{S}'\}$ be a partition of $\{1,\hdots ,h\}$.  Assume that $((a_i|\ i\in \mathcal{S})\cap(a_j|\ j\in \mathcal{S}'))J=(a_i|\ i\in \mathcal{S})J\cap(a_j|\ j\in \mathcal{S}')J$. If 
$(a_i|\ i\in \mathcal{S})J\cap(a_j|\ j\in \mathcal{S}')J$ does not contain $x^w$, then either $\mathcal{S}\cap V=\emptyset$ or $\mathcal{S}'\cap V=\emptyset$. If $((a_i|\ i\in \mathcal{S})\cap(a_j|\ j\in \mathcal{S}'))J$ contains $x^w$, then $G$ contains an edge between a vertex in $\mathcal{S}$ and a vertex in $\mathcal{S}'$.  Since this occurs for all $\mathcal{S}\subset\{1,\hdots,h\}$, it follows that $G$ is path connected.

Suppose that $x^w(\mathbf{e}_y-\mathbf{e}_z)$ is in $\sum_{i<j}(a_iJ\cap a_jJ)(\mathbf{e}_i-\mathbf{e}_j)$ for some $y,z$ in $\{1,\hdots,h\}$. It follows that $y$ and $z$ are in $V$.  Since $G$ is path connected, there is a path $i_1i_2\hdots i_h$ from $y$ to $z$ where $i_j$ is in $V$, $i_1=y$ and $i_h=z$. By our definition of $G$, for each edge $i_{j}i_{j+1}$, we have that $x^w(\mathbf{e}_{i_j}-\mathbf{e}_{i_{j+1}})$ is in $\sum_{i<j}(a_iR\cap a_jR)J(\mathbf{e}_i-\mathbf{e}_j)$. Thus 
\[
\textstyle x^w(\mathbf{e}_y-\mathbf{e}_z)=\sum_{j=1}^{h-1}x^w(\mathbf{e}_{i_j}-\mathbf{e}_{i_{j+1}})\in\sum_{i<j}(a_iR\cap a_jR)J(\mathbf{e}_i-\mathbf{e}_j).
\] 
Since this condition holds for all $w$ in $\mathbb{Z}^n$ it follows that 
\[
\textstyle\sum_{i<j}(a_iJ\cap a_jJ)(\mathbf{e}_i-\mathbf{e}_j)=\sum_{i<j}(a_iR\cap a_jR)J(\mathbf{e}_i-\mathbf{e}_j).
\]
Thus by Theorem \ref{prop:moduleOneWay2} we have $\T(I\otimes_RJ)=0$. 

The case where $R$ is a one-dimensional analytically irreducible residually rational ring  with maximal ideal $\m=(t^{n_1},\hdots ,t^{n_e})$ is analogous.  Pick $w$ in $\mathbb{Z}$ and use $t^w$ instead of $x^w$ in the argument above. 
%Suppose $((t^{b_i}|i\in \mathcal{S})\cap(t^{b_j}|j\notin \mathcal{S}))J=(t^{b_i}|i\in \mathcal{S})J\cap(t^{b_j}|j\notin \mathcal{S})J$\\ for all $\mathcal{S}\subset\{1,\hdots ,m\}$. 
%Then it follows that we have the following relations among $v(R)$ relative ideals: 
%\[
%((b_i|i\in \mathcal{S})\cap(b_j|j\notin \mathcal{S}))+v(J)=((b_i|i\in \mathcal{S})+v(J))\cap((b_j|j\notin \mathcal{S})+v(J))
%\]
%for all $\mathcal{S}\subset\{1,\hdots ,m\}$.  Thus by Proposition 
%\ref{prop:tauVanishing}  we have that $\tau(v(I),v(J))=0$ and by Theorem \ref{prop:rank} 
%$\T(I\otimes J)=0$.
\end{proof}

\begin{question}
From Lemma \ref{prop:moduleOneWay} it follows that the implication $\eqref{equiv1}\implies\eqref{equiv2}$ in Theorem \ref{thm:equiv} remains true for general ideals over any commutative domain.  For which ideals and which classes of rings is the reverse implication also true? 
\end{question}
%\begin{remark}
%A diagram chase in the proof of Lemma \ref{prop:moduleOneWay} shows that the map $\delta$ may be defined as follows. Let 
%$\gamma\in\T(I\otimes_{R} M)$, then $\gamma$ may be written in the form $\sum_{i<j}(\sum a_{ijh}\otimes m'_{ijh}-\sum a'_{ijh}\otimes m'_{ijh})$  for some 
%$a_{ijh}\in A_i$, $a'_{ijh}\in A_j$ and $m_{ijh}, m'_{ijh}\in M$ with $\sum a_{ijh}m_{ijh}=\sum a'_{ijh}m'_{ijh}$.  Then 
%\[
%\textstyle\delta(\gamma)=\sum_{i<j,h}a_{ijh}m_{ijh}(\mathbf{e}_i-\mathbf{e}_j) 
%\mod \sum_{i<j}(A_i\cap A_j)M(\mathbf{e}_i-\mathbf{e}_j).
%\]
%\end{remark}
%
%
%
%\begin{lem}
%Let $R$ be a domain. Let $I$ be an ideal and let $M$ be a finitely generated torsion-free $R$-module. Then we have the following:
%\begin{enumerate}
%\item \label{free} If $I=(a_1,\hdots ,a_n)$, then $\T(I\otimes_R M)\cong   \frac{\sum_{i<j}(a_iM\cap a_jM)(\mathbf{e}_i-\mathbf{e}_j)}{\sum_{i<j}(a_iR\cap a_jR)M(\mathbf{e}_i-\mathbf{e}_j)}$;
%\item \label{two} Given ideals $P$ and $Q$ such that $P+Q=I$ the following sequence is exact.\\
%$\T(P\otimes_RM)\oplus T(Q\otimes_RM)\to T(I\otimes_RM)\to \frac{PM\cap QM}{(P\cap Q)M}\to 0$; and
%\item \label{twoFree} If $I=(f,g)$, then $\T(I\otimes_{R} M)\cong \frac{fM\cap gM}{(fR\cap gR)M}$.
%\end{enumerate}
%\end{lem}

%\begin{proof}
%Since $\T(a_iR\otimes M)=0$ for all $i$, \eqref{free} follows from Lemma \ref{prop:moduleOneWay}. \eqref{two} is a special case of \ref{prop:moduleOneWay} and $\eqref{twoFree}$ is a special case of \eqref{free}.
%\end{proof}

\begin{defn}
Let $R$ be a domain with field of fractions $K$.  A \emph{fractional ideal} $I$ is a finitely generated submodule of $K$.  Let $M$ be a finitely generated rank $n$ submodule of $K^n$. The \emph{inverse} of $M$ is $M^{-1}:=\{v\in K^n|\ v\cdot w\in R$ for all $w\in M\}$, where $\cdot$ is the dot product. In particular $I^{-1}:=(R:_KI)$.
\end{defn}

\begin{remark}
Let $R$ be a domain with field of fractions $K$. Let $M$ and $N$ be rank $n$ submodules of $K^n$. Then $(M+N)^{-1}=M^{-1}\cap N^{-1}$.

Let $x$ be an element of $(M+N)^{-1}$. Then $x\cdot M\subseteq R$ and $x\cdot N\subseteq R$, so $x$ is in $M^{-1}\cap N^{-1}$. Let $y$ be an element of $M^{-1}\cap N^{-1}$. Then $y\cdot (M+N)=y\cdot M+y\cdot N\subseteq R$, so $y$ is in $(M+N)^{-1}$.
\end{remark}

\begin{lem}
Let $R$ be a domain with field of fractions $K$. Let $M$ be a finitely generated submodule of $K^n$. Then there is a natural isomorphism $M^{-1}\cong\Hom{M}{R}$ defined by sending $v$ in $M^{-1}$ to the map $[w\mapsto v\cdot w]$.
\end{lem}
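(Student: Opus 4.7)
The plan is to exhibit the stated formula $\phi\colon M^{-1}\to\Hom{M}{R}$, $v\mapsto [w\mapsto v\cdot w]$, as an $R$-module isomorphism by checking it is well defined, $R$-linear, injective, and surjective. First I would verify well-definedness: for fixed $v\in M^{-1}$, bilinearity of the dot product makes $w\mapsto v\cdot w$ an $R$-linear map out of $M$, and its image lies in $R$ by the very definition of $M^{-1}$. The same bilinearity shows that $\phi$ itself is $R$-linear in $v$.

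For injectivity I would invoke the rank $n$ hypothesis that is built into the definition of $M^{-1}$: one can choose elements $w_1,\dots,w_n\in M$ that are $K$-linearly independent in $K^n$, hence form a $K$-basis. If $\phi(v)=0$ then $v\cdot w_i=0$ for each $i$, and orthogonality to a basis forces $v=0$ in $K^n$.

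Surjectivity is the main step. Given $f\in\Hom{M}{R}$, the strategy is to extend $f$ to a $K$-linear functional on all of $K^n$ and then read off $v$ as its coordinate vector. Concretely, I would pass to $f\otimes_R K\colon M\otimes_R K\to R\otimes_R K=K$; since $M$ is torsion-free (a submodule of $K^n$), the natural map $M\otimes_R K\to KM\subseteq K^n$ is an isomorphism, and the rank $n$ hypothesis gives $KM=K^n$. Thus $f$ extends uniquely to a $K$-linear map $K^n\to K$, which on the standard basis of $K^n$ is recorded by a vector $v\in K^n$ satisfying $v\cdot w=f(w)$ for all $w\in K^n$. Restricting to $w\in M$ shows $v\cdot w=f(w)\in R$, so $v\in M^{-1}$ and $\phi(v)=f$.

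The main obstacle is precisely this extension step: one must justify that a map initially defined only on the $R$-module $M$ passes unambiguously to the $K$-vector space $K^n$ and is encoded there by dot product with a single vector. Both points rest on the rank $n$ assumption implicit in the definition of $M^{-1}$, after which the argument reduces to standard linear algebra over $K$ together with the compatibility $v\cdot M\subseteq R$.
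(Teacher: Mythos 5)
Your proof is correct and follows essentially the same route as the paper's: both exploit the rank~$n$ hypothesis to find a $K$-basis of $K^n$ inside $M$ (the paper takes scaled standard basis vectors $\alpha_ie_i\in M$ and writes the inverse explicitly as $f\mapsto\sum_i f(\alpha_ie_i)e_i/\alpha_i$), and both rest on the observation that an $R$-linear map $M\to R$ extends uniquely to a $K$-linear functional on $K^n=KM$, which your base-change to $K$ and the paper's direct computation with $f(\sum\beta_i\alpha_ie_i)$ accomplish in equivalent ways. The only difference is packaging: you argue injectivity and surjectivity separately, while the paper exhibits a two-sided inverse.
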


\begin{proof}
The lemma will follow if we can show that the map $[w\mapsto v\cdot w]$ is invertible. Let $e_1,\hdots e_n$ be standard basis vectors in $K^n$. Since $M$ has rank $n$ there exist 
$\alpha_1,\hdots,\alpha_n$ in $K\setminus\{0\}$ such that $\alpha_1e_1,\hdots ,\alpha_ne_n$ are in $M$. By clearing denominators we may choose the $\alpha_i$ to be in $R$. Our candidate for the inverse map will map $f$ in $\Hom{M}{R}$ to the vector
$\sum_{i=1}^n \frac{f(\alpha_ie_i)e_i}{\alpha_i}$ in $K^n$.  

%However, it remains to show that $\sum_{i=1}^n \frac{f(\alpha_ie_i)e_i}{\alpha_i}$ is in $M^{-1}$. 
Any element of $x$ in $M$ can be written in the form $x=\sum_{i=1}^n\beta_i\alpha_ie_i$ for some $\beta_i$ in $K$. Since $f$ is $R$-linear it must also be $K$-linear. This explains the last step in the next display.
\begin{align*}
\textstyle x\cdot \sum_{i=1}^n \frac{f(\alpha_ie_i)e_i}{\alpha_i}
&\textstyle =\left(\sum_{i=1}^n\beta_i\alpha_ie_i\right)\cdot\left(\sum_{i=1}^n\frac{f(\alpha_ie_i)e_i}{\alpha_i}\right)\\
&\textstyle =\sum\beta_if(\alpha_ie_i)\\
&\textstyle =f\left(\sum_{i=1}^n\beta_i\alpha_ie_i\right)\\
&=f(x)
\end{align*}
Therefore, $[w\mapsto v\cdot w]$ maps the vector $\sum_{i=1}^n \frac{f(\alpha_ie_i)e_i}{\alpha_i}$ back to $f$.
Since $f$ is in $\Hom{M}{R}$ it follows that $f(\sum_{i=1}^n\beta_i\alpha_ie_i)$ is in $R$. Thus $x\cdot \sum_{i=1}^n \frac{f(\alpha_ie_i)e_i}{\alpha_i}$ is in $R$ and $\sum_{i=1}^n \frac{f(\alpha_ie_i)e_i}{\alpha_i}$ is in $M^{-1}$.  Let $v$ be a vector in $M^{-1}$. Then $v=\sum_{i=1}^n\frac{(v\cdot \alpha_ie_i)e_i}{\alpha_i}$, proving that composition in the other direction is also the identity.
\end{proof}

%For fractional ideals $I$ and $J$ we have an isomorphism $(I:_KJ)\cong\Hom{J}{I}$ defined by sending $x\in(I:_KJ)$ to multiplication by $x$.  In particular $I^{-1}\cong I^*$.

\begin{lem}\label{lem:iso2gen}
Let $R$ be a domain with field of fractions $K$. Let $I=(f,g)$ be a two-generated fractional ideal of $R$ and let $M$ be a rank $n$ submodule of $K^n$.  Then
$\displaystyle\T(I\otimes_RM^*)\cong\frac{(IM)^{-1}}{I^{-1}M^{-1}}$.
Specifically $\displaystyle\T(I\otimes_RI^*)\cong \frac{(I^2)^{-1}}{(I^{-1})^2}$.
\end{lem}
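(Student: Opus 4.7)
The plan is to apply Lemma~\ref{prop:moduleOneWay} to the torsion-free module $M^*$, identify $M^*$ with $M^{-1}\subseteq K^n$ via the preceding lemma, and then transport the resulting quotient through multiplication by the scalar $(fg)^{-1}\in K^{\times}$.

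Since $I=(f,g)$ has two generators, Lemma~\ref{prop:moduleOneWay} applied with the torsion-free module $M^*$ gives
\[
\T(I\otimes_R M^*)\;\cong\;\frac{fM^*\cap gM^*}{(fR\cap gR)M^*}.
\]
The first step is the identity $(cN)^{-1}=c^{-1}N^{-1}$ for any $c\in K^{\times}$ and any rank $n$ submodule $N\subseteq K^n$, which is immediate from the definition of the inverse. Combined with the remark $(N_1+N_2)^{-1}=N_1^{-1}\cap N_2^{-1}$ this yields the two identifications
\[
(IM)^{-1}=(fM+gM)^{-1}=f^{-1}M^{-1}\cap g^{-1}M^{-1},\qquad I^{-1}=f^{-1}R\cap g^{-1}R.
\]

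The second step is to observe that multiplication by $(fg)^{-1}$ is an $R$-linear bijection of $K^n$; I would check that it carries the numerator $fM^{-1}\cap gM^{-1}$ onto $g^{-1}M^{-1}\cap f^{-1}M^{-1}=(IM)^{-1}$ and the denominator $(fR\cap gR)M^{-1}$ onto $\bigl((fR\cap gR)(fg)^{-1}\bigr)M^{-1}$. The only small computation is $(fR\cap gR)(fg)^{-1}=f^{-1}R\cap g^{-1}R$: if $r=fa=gb\in fR\cap gR$ then $r(fg)^{-1}=a/g=b/f\in f^{-1}R\cap g^{-1}R$, and conversely any $s\in f^{-1}R\cap g^{-1}R$ satisfies $fgs\in fR\cap gR$. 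So the denominator maps onto $I^{-1}M^{-1}$, and passing to the quotient produces the desired isomorphism.

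The specific statement is simply the case $n=1$, $M=I$, since then $IM=I^2$ and $M^*=I^{-1}$. I do not foresee any serious obstacle; the only point deserving attention is keeping the two meanings of $f^{-1}M^{-1}$ straight, namely the scaled set $\{f^{-1}v:v\in M^{-1}\}\subseteq K^n$ and the inverse module $(fM)^{-1}$, which agree by the identity established in the first step.
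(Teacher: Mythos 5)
Your proposal is correct and follows essentially the same route as the paper: apply Lemma \ref{prop:moduleOneWay} to $M^*\cong M^{-1}$, then use the identities $fR\cap gR=fgI^{-1}$ and $fM^{-1}\cap gM^{-1}=fg(IM)^{-1}$ (which you phrase as multiplication by $(fg)^{-1}$, while the paper phrases it as factoring out $fg$) to rewrite the quotient. The two arguments are the same computation presented in opposite directions.
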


\begin{proof}
By Lemma \ref{prop:moduleOneWay} and the isomorphism $M^*\cong M^{-1}$ we get the first step below.
\begin{align}
\T(I\otimes_RM^*)\cong\frac{fM^{-1}\cap gM^{-1}}{(fR\cap gR)M^{-1}}\cong\frac{fM^{-1}\cap gM^{-1}}{fgI^{-1}M^{-1}}\cong\frac{fg(IM)^{-1}}{fgI^{-1}M^{-1}}\cong\frac{(IM)^{-1}}{I^{-1}M^{-1}}
\end{align} The equivalence
$fR\cap gR=fg(g^{-1}R\cap f^{-1}R)=fg(gR+fR)^{-1}=fgI^{-1}$ justifies the second step in $(1)$. The
equivalence $fM^{-1}\cap gM^{-1}=fg(g^{-1}M^{-1}\cap f^{-1}M^{-1})=fg(gM+fM)^{-1}=fg(IM)^{-1}$ gives the third step in $(1)$, and the result follows.
\end{proof}

\section{Correspondence between rings and numerical semigroups}

Let $\mathbb{N}_0$ denote the non-negative integers. A numerical semigroup $S$ is a submonoid of 
$(\mathbb{N}_0,+)$ with finite complement in $\mathbb{N}_0$. We use the notation $\langle n_1,\hdots ,n_e\rangle$ for the numerical semigroup $n_1\mathbb{N}_0+\hdots +n_e\mathbb{N}_0$. The Frobenius number of a numerical semigroup $S$ is the largest integer not in $S$ and is denoted by $F_S$ or simply $F$, when the underlying semigroup is unambiguous. 
For a detailed introduction to numerical semigroups see \cite{RosalesSanchez}.

Let $S$ be a submonoid of $\mathbb{N}_0^n$. Then a relative ideal $A$ of $S$ is a set of the form 
$(a_1,\hdots , a_m):=\bigcup_{i=1}^h(a_i+S)$ for some $a_1,\hdots,a_h$ in $\mathbb{Z}^n$.

Let $R$ be a one-dimensional analytically irreducible residually rational  domain with field of fractions $K$. Let 
$(\overline R,\mathfrak{n})$ denote the integral closure of $R$ with maximal ideal $\mathfrak{n}$. In this case $\overline R$ is a DVR. Fix a generator $t$ for $\mathfrak{n}$ and let $v:K^{\times}\to\mathbb{Z}$ be the valuation given by $v(f):=\sup\{i\in\mathbb{Z}|\ f\in t^iR\}$. Let $I$ be a fractional ideal of $R$. We define $v(R):=v(R-\{0\})$ and $v(I):=v(I-\{0\})$.  Then $v(R)$ is a numerical semigroup, and $v(I)$ is a relative ideal of $v(R)$.

%\begin{defn}
%Given a monoid $(S,+)$ with identity element $0$, an $S$-act is a set $A$ along with a map $+:S\times A\to A$, which satisfies the following:
%\begin{enumerate}
%\item $0+a=a$ for all $a\in A$; and
%\item for all $x,y\in S$ and $a\in A$ we have $(x+y)+a=x+(y+a)$.
%\end{enumerate}
%We say that an $S$-act $A$ is finite when $A=\bigcup_{i=1}^nS+a_i$.
%
%If $S$ is finitely generated commutative and cancelative, then there is a unique smallest finite abelian group $G$ containing $S$, which is a finite abelian group. A relative ideal $A$ of $S$ is finite $S$-act contained in $G$.
%\end{defn}

\begin{defn}
Choose a monomial ordering on $R=k[x_1,\hdots ,x_n]$. Let $f$ be a non-zero element of $R$.  Then $\deg(f):=d=[d_1,\hdots,d_n]$ in $\mathbb{N}^n_0$ where $x^d:=\prod x_i^{d_i}$ is the leading monomial of $f$. For any non-zero elements $f$ and $g$ in $R$ we define $\deg(f/g):=\deg(f)-\deg(g)$ in $\mathbb{Z}^n$. Let $X$ be a subset of the quotient field $K$. Then 
\[\deg(X):=\{d\in\mathbb{Z}^n|\ \deg(f)=d \textrm{ for some } f\in X\setminus\{0\}\}.\]
\end{defn}

Note that we will often be considering $R$ as a graded $k$-subalgebra of $k[x_1,\hdots,x_n]$.  In such cases there may be many monomial orderings on the monomials of $R$, which are not simply restriction of monomial orderings from $k[x_1,\hdots,x_n]$. However, we will only be considering monomial orderings from $k[x_1,\hdots,x_n]$.

Let $R$ be a $k$-subalgebra of $k[x_1,\hdots,x_n]$ and let $I$ be a fractional ideal of $R$. Then $\deg(R)$ is a submonoid of $\mathbb{N}^n_0$ and $\deg(I)$ is a relative ideal of $\deg(R)$.

%The following Lemma is contained in the proof of \cite[Theorem]{Kunz}.

 \begin{remark} 
 \label{prop:snug} Let $R$ be a one-dimensional analytically irreducible residually rational ring with maximal ideal $\m$. Then we have the following:
\begin{enumerate}
\item \label{snug} The conductor $(R:_K\overline R)$ equals $\mathfrak{n}^{F+1}$, where $F=F_{v(R)}$;
\item \label{unit} If $f$ and $g$ are non-zero elements of $K$ such that 
$v(f)=v(g)$, then there exists a unit $u$ in $R$ such that $v(f-ug)>v(f)$.
\item \label{max} If $\m=(t^{n_1},\hdots ,t^{n_e})$ for some $t$ in $\overline R$,
then $v(R)=\langle n_1,\hdots ,n_e\rangle$.
\end{enumerate}

\eqref{snug}: This is shown in the proof of  \cite[Theorem]{Kunz}. 

\eqref{unit}: Consider the natural map $\gamma:\n^(f)\to\n^{v(f)}/\n^{v(f)+1}$. There exists an element $c$ in $k\setminus\{0\}$ such that $v(f)=cv(g)$. Since  $k=R/\m$ we may choose a unit $u$ in $R$ that maps to $c$ under the natural map $R\to k$.  It follows that $v(f-ug)=v(f)-cv(g)=0$. Thus $f-ug$ is in $\n^{v(f)+1}$ and the result follows.

\eqref{max}: The inclusion $\langle n_1,\hdots ,n_e\rangle\subseteq v(R)$ is straightforward. 
Let $r$ be a non-zero element of $R$. Since $R$ is local, $r=\sum_{i=1}^hu_it^{n\cdot w_i}$ where each $u_i$ is a unit in $R$, $n=[n_1,\hdots ,n_e]$, 
each $w_i$ is in $\mathbb{N}_0^{e}$ and $ n\cdot w_i\leq n\cdot w_{i+1}$. Let $\ell$ be maximal such that $ n\cdot w_\ell=n\cdot w_1$.  Since $ n\cdot w_1\leq v(r)$, we may assume that $ n\cdot w_1$ is maximal among all possible choices for $w_1$. In this case $\textstyle v(\sum_{i=1}^{\ell}u_it^{ n\cdot w_i})
=v((\sum_{i=1}^{\ell}u_i)t^{ n\cdot w_1})= n\cdot w_1$; 
hence $v(r)=n\cdot w_1$ is in
$\langle n_1,\hdots ,n_e\rangle$.
\end{remark}
 
%\begin{proof} Let $\mathfrak{c}$ be the conductor from $R$ to $\overline R$.  Then $\mathfrak{c}$ is a non-zero ideal of $\overline R$.  Hence $\mathfrak{c}=\n^N$ for some $N\in\mathbb{N}$. Since $\mathfrak{c}\subset R$ and $R$ does not contain any elements $x\in K$ such that $v(x)=F$ it follows that $N>F$. Assume $N>F+1$. Then $N-1\in v(R)$, so we may choose $a\in\overline{R}$ and $b\in R$ such that $v(a)=v(b)=N-1$.  By Lemma \ref{lem:unit} there is a unit $u\in R$ such that $v(a-ub)\geq N$.  Therefore $a-ub\in \mathfrak{n}^N\subset R$.  It follows that $a=(a-ub)+ub\in R$ and hence $\mathfrak{n}^{N-1}\subset R$.  This contradicts the minimality of $N$. Hence $N=F+1$.  
%\end{proof}

\begin{remark}\label{lem:quotient}\cite[Proposition II.1.4]{Barucci}
Let $R$ be a one-dimensional analytically irreducible residually rational ring. If $I$ and $J$ are fractional ideals of $R$ with $I\subseteq J$, then 
$\len(J/I)=|v(J)\setminus v(I)|$, where $\len$ denotes length as an $R$-module. 
\end{remark}

%\begin{proof}
%Let $f\in I$  and let $a=F+1+v(f)$. Then by Remark \ref{prop:snug} we have $\mathfrak{n}^{a}=f\mathfrak{n}^{F+1}\subseteq Rf\subseteq I\subseteq J$.
%Choose $a\in\mathbb{N}$ such that $\mathfrak{n}^a\subseteq I\subseteq J$.  Let $x_1,\hdots ,x_n$ be elements of $I$ such that
%$v(x_{i+1})<v(x_i)$ and $\{v(x_1),\hdots, v(x_n)\}=\{z\in v(I):\ z<a\}$. Then 
%\[
%\textstyle\mathfrak{n}^a\subsetneq(\mathfrak{n}^{a}+x_1R)\subsetneq\hdots
%\subsetneq(\mathfrak{n}^{a}+\sum_{i=1}^{n-1}x_iR)
%\subsetneq(\mathfrak{n}^{a}+\sum_{i=1}^{n}x_iR)=I
%\]
%is a maximal chain between $\n^a$ and $I$. Thus
%$\len(I/\mathfrak{n}^{a})=|\{z\in v(I):\ z<a\}|$; hence \\
%\indent\indent $\len(J/I)\ =\len(J/\n^a)-\len(I/\n^a)$\\
%\indent \indent \indent \indent \indent $=|\{z\in v(J):\ z<a\}|-|\{z\in v(I):\ z<a\}|
%=|v(J)\setminus v(I)|$.
%\end{proof}

\begin{remark} \label{rem:quotient}
Let $R$ be a $k$-subalgebra of $k[x_1,\hdots,x_n]$. 
Given fractional ideals $I\subseteq J$ of $R$ we have $\len(J/I)=|\deg(J)\setminus\deg(I)|$. 

%Let $f$ be a non-unit in $R\subseteq k[x]$. Then $k[x]$ is finitely generated as a $k[f]$-module; hence $k[x]$ is a finitely generated $R$-module. By the Artin-Tate Lemma $R$ is Noetherian.  Choose $f_1,\hdots ,f_n$ in $k[x]$, which generate $R$ over $k$.  Since $k(f_1,\hdots, f_n)$ is a finite algebraic extension of $k(f_1)$, it follows that $R$ is one-dimensional.
   
Since $R$ is a $k$-algebra, $J/I$ is a $k$-vector space.  By choosing a monomial order, modulo $I$ each element of $J$ is equivalent to a unique reduced polynomial such that none of its terms are divisible by a leading monomial in $I$. These polynomials form a $|\deg(J)\setminus\deg(I)|$-dimensional $k$-vector space, which is isomorphic to $J/I$. 
\end{remark}

\begin{defn} 
Let $S$ be a submonoid of $\mathbb{N}_0^n$, and let $A$ and $B$ be relative ideals of $S$.  Then the semigroup tensor product $A\otimes_SB$ is the set $A\times B$ modulo the equivalence relation generated by $(s+a,b)\sim (a,s+b)$ for all $a$ in $A$, $b$ in $B$ and $s$ in $S$. Elements of $A\otimes_S B$ will be written in the form 
$a\otimes b$ with $a$ in $A$ and $b$ in $B$. 

We fix a map $\chi:A\otimes_SB\to\mathbb{Z}^n$ defined by 
$\chi(a\otimes b)=a+b$. 
%We will simply write $\chi$ for $\chi_{A,B}$ when the relative ideals are unambiguous. 
For each $z$ in $\mathbb{Z}^n$ let
$\tau_z(A,B):=\textrm{max}\{0,  |\chi^{-1}(z)|-1\}$.
Finally, let $\tau(A,B)=\sum_{z\in\mathbb{Z}^n} \tau_z(A,B)$
be the {\it torsion number} of $A$ and $B$.
\end{defn}

\begin{remark}
Let $I$ and $J$ be monomial ideals of a numerical semigroup ring $R$. Then $\tau_z(\deg(I),\deg(J))=\Hilb(\T(I\otimes_RJ),z)$, where $\Hilb(-,z)$ is the Hilbert function.  

 To see this consider the exact sequence 
$0\to \T(I\otimes_RJ)\to I\otimes_RJ\to IJ\to 0$.
From the additivity of the Hilbert function, we have 
\[
\Hilb(\T(I\otimes_RJ),z)=\Hilb(I\otimes_RJ,z)-\Hilb(IJ,z).
\]
Now 
$\Hilb(I\otimes_RJ,z)$ is the same as the number of elements of $\deg(I)\otimes_{\deg(R)}\deg(J)$ which map to $z$ under the natural map $\chi:\deg(I)\otimes_{\deg(R)}\deg(J)\to\mathbb{Z}$. In other words 
$\Hilb(I\otimes_RJ,z)=|\chi^{-1}(z)|$. Also $\Hilb(IJ,z)$ only has two possible values $0$ and $1$, and it is $0$ if and only if $\Hilb(I\otimes_RJ,z)=0$. Therefore $\Hilb(IJ,z)=0$ if $\Hilb(I\otimes_RJ,z)=0$ and $\Hilb(IJ,z)=1$ otherwise. Thus 
\[
\Hilb(I\otimes_RJ,z)-H(IJ,z)=\textrm{max}\{0,  |\chi^{-1}(z)|-1\}:=\tau_z(\deg(I),\deg(J)).
\]
\end{remark}

\begin{defn}
Let $S$ be a numerical semigroup with relative ideals $A$ and $B$ minimally generated by $a_1,a_2,\hdots ,a_m$ and $b_1,b_2,\hdots ,b_n$ respectively. Given an integer $z$, we define a bipartite graph $\Gamma_z(A,B)$  with respective vertex and edge sets
\[
V_z(A,B)=\{v_i|\ z-a_i\in B\}\cup\{w_j|\ z-b_j\in A\}
\textrm{ and } E_z(A,B)=\{v_iw_j\ |\ z-a_i-b_j\in S\}.
\]
\end{defn}

\begin{ex}
Let $S=\langle 5,11\rangle$, $A=(20,21,22)$ and $B=(0,23,24)$. Let  $a_1=20$, $a_2=21$, $a_3=22$, $b_1=0$, $b_2=23$ and $b_4=24$.  Then 
\[
\Gamma_{44}(A,B)=\begin{array}{c} \xymatrix{\bullet_{v_1}\ar @{-}[r] & \bullet_{w_3}\\
\bullet_{v_2} \ar @{-}[r] & \bullet_{w_2}} \end{array}
\]
The easiest way to see this is that $44-a_1-b_3=0$ is in $S$ and $44-a_2-b_2=0$ is in $S$.  However, none of the other edges appear because there are no other cases where $44-a_i-b_j$ is in $S$. Similarly we have
\[
\Gamma_{45}(A,B)=\begin{array}{c} \xymatrix{\bullet_{v_1}\ar @{-}[r] & \bullet_{w_1}\\
\bullet_{v_2} \ar @{-}[r] & \bullet_{w_3}\\
\bullet_{v_3} \ar @{-}[r] & \bullet_{w_2}} \end{array}\quad
\Gamma_{55}(A,B)=\begin{array}{c} \xymatrix{\bullet_{v_1}\ar @{-}[r] \ar @{-}[ddr]& \bullet_{w_1}\\
\bullet_{v_2} \ar @{-}[r] \ar @{-}[dr] & \bullet_{w_2}\\
\bullet_{v_3} \ar @{-}[uur] \ar @{-}[ur] & \bullet_{w_3}} \end{array}
\]

\end{ex}

\begin{prop} \label{prop:graph}
Let $S$ be a numerical semigroup with relative ideals $A$ and $B$.  There is a one to one correspondence between connected components of $\Gamma_z(A,B)$ and elements $a\otimes b$ in $A\otimes_S B$ such that $a+b=z$. The correspondence is given by identifying the connected component containing $v_i$ with $a_i\otimes_S(z-a_i)$ in $A\otimes_SB$.   
\end{prop}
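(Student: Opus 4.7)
The plan is to build a map $\Phi$ from the vertex set of $\Gamma_z(A,B)$ onto the fiber $\chi^{-1}(z)\subset A\otimes_SB$, verify that $\Phi$ is constant on each connected component, and then check that the induced map on components is bijective. Define $\Phi(v_i):=a_i\otimes(z-a_i)$ and $\Phi(w_j):=(z-b_j)\otimes b_j$; these are well-defined because $z-a_i\in B$ and $z-b_j\in A$ by the definition of the vertex sets. If $v_iw_j$ is an edge, set $s:=z-a_i-b_j\in S$; then in $A\otimes_SB$,
\[
a_i\otimes(z-a_i)=a_i\otimes(b_j+s)=(a_i+s)\otimes b_j=(z-b_j)\otimes b_j,
\]
so $\Phi(v_i)=\Phi(w_j)$. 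Propagating along paths shows $\Phi$ is constant on each component.

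Surjectivity is immediate: any $a\otimes b\in\chi^{-1}(z)$ has $a\in A=\bigcup_i(a_i+S)$, so $a=a_i+s$ for some generator $a_i$ and $s\in S$; then $z-a_i=b+s\in B$ makes $v_i$ a vertex and $a\otimes b=a_i\otimes(b+s)=\Phi(v_i)$. In particular the component of $v_i$ gets identified with the class $a_i\otimes(z-a_i)$.

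The main obstacle is injectivity. Suppose $a_i\otimes(z-a_i)=a_{i'}\otimes(z-a_{i'})$ in $A\otimes_SB$. By definition of the equivalence relation, there is a chain of pairs $(p_0,q_0),\ldots,(p_k,q_k)\in A\times B$ with $p_l+q_l=z$ for every $l$, with $(p_0,q_0)=(a_i,z-a_i)$ and $(p_k,q_k)=(a_{i'},z-a_{i'})$, and with each consecutive step of the form $(p_l,q_l)\leftrightarrow(p_l\pm s_l,q_l\mp s_l)$ for some $s_l\in S$. For each $l$, pick generators $a_{i_l}$ of $A$ and $b_{j_l}$ of $B$ with $p_l-a_{i_l}\in S$ and $q_l-b_{j_l}\in S$, arranging $a_{i_0}=a_i$ and $a_{i_k}=a_{i'}$. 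Then $v_{i_l}w_{j_l}$ is an edge of $\Gamma_z(A,B)$, because
\[
z-a_{i_l}-b_{j_l}=(p_l-a_{i_l})+(q_l-b_{j_l})\in S.
\]

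In the case $p_{l+1}=p_l+s_l$, the choice $a_{i_l}$ still witnesses $p_{l+1}-a_{i_l}=(p_l-a_{i_l})+s_l\in S$, so $v_{i_l}w_{j_{l+1}}$ is also an edge; together with $v_{i_{l+1}}w_{j_{l+1}}$ this gives $v_{i_l}$ and $v_{i_{l+1}}$ a common neighbor in $\Gamma_z(A,B)$. The opposite case $q_{l+1}=q_l+s_l$ uses $w_{j_l}$ as the bridge by the symmetric argument (since $q_{l+1}-b_{j_l}=(q_l-b_{j_l})+s_l\in S$). By transitivity along the chain, $v_i=v_{i_0}$ and $v_{i'}=v_{i_k}$ lie in the same component, which completes the proof. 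The delicate point is recognizing that even though the chain of elementary equivalences in $A\otimes_SB$ visits arbitrary pairs in $A\times B$, each such pair can be anchored to a canonical vertex via a generator-divisor, and the divisibility witness is preserved by a single shift by $s\in S$, so that consecutive anchors always share a neighbor.
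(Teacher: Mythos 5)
Your proof is correct and follows essentially the same route as the paper's: edges of $\Gamma_z(A,B)$ encode the elementary identities $a_i\otimes(b_j+s)=(a_i+s)\otimes b_j$, paths give equality in $A\otimes_SB$, and a chain of elementary equivalences is converted back into a walk by anchoring each intermediate pair $(p_l,q_l)$ to generators. If anything, your injectivity step is spelled out more carefully than the paper's, which merely asserts the existence of the intermediate sequence of representations.
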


\begin{proof}
%Let $a\otimes b$ be an element of $A\otimes_S B$ such that $a+b=z$.  Then $a=a_i+g$ and $b=b_j+g'$ for some $i$ in $\{1,\hdots ,m\}$, $j$ in $\{1,\hdots ,n\}$ and $g,g'$ in $S$.  Furthermore $z-a_i-b_j=g+g'$ is an element of $S$.  
It follows from the definition of $\Gamma_z(A,B)$ that $v_iw_j$ is in $E_z(A,B)$ if and only if there exists $a_i\otimes (b_j+g)$ in $A\otimes_SB$ with $g$ in $S$.

 Suppose that $a_i\otimes(b_j+g)$ and $a_s\otimes(b_t+g')$ are elements of $A\otimes_S B$ with 
$a_i+b_j+g=a_s+b_t+g'=z$.  Then it suffices to show that $a_i\otimes(b_j+g)=a_s\otimes(b_t+g')$ if and only if $v_iw_j$ and $v_sw_t$ are in the same connected component of $\Gamma_z(A,B)$.

Suppose that $v_iw_j$ and $v_sw_t$ are in the same connected component of 
$\Gamma_z(A,B)$. Then there exists a finite sequence of edges $\{v_{i_h}w_{j_h}\}_{h=0}^{\ell}$ such that $v_iw_j=v_{i_0}w_{j_0}$, $v_sw_t=v_{i_{\ell}}w_{j_{\ell}}$ and $v_{i_h}w_{j_h}$ is adjacent to $v_{i_{h+1}}w_{j_{h+1}}$ for all $h$. Since 
$v_{i_h}w_{j_h}$ and $v_{i_{h+1}}w_{j_{h+1}}$ are adjacent either $v_{i_h}=v_{i_{h+1}}$ or $w_{j_h}=w_{j_{h+1}}$.  Suppose that for a given value of $h$ we have $v_{i_h}=v_{i_{h+1}}$. Then $a_{i_h}=a_{i_{h+1}}$, so
\[
a_{i_h}\otimes (b_{j_h}+g_h)=a_{i_h}\otimes (z-a_{i_h})
=a_{i_{h+1}}\otimes(z-a_{i_{h+1}})
=a_{i_{h+1}}\otimes(b_{j_{h+1}}+g_{h+1}).
\] 
Suppose that for a given value of $h$ we have $w_{j_h}=w_{j_{h+1}}$. Then $b_{j_h}=b_{j_{h+1}}$, so
\[
(a_{i_h}+g_h)\otimes b_{j_h}=(z-b_{j_h})\otimes b_{j_h}
=(z-b_{j_{h+1}})\otimes b_{j_{h+1}}
=(a_{i_{h+1}}+g_{h+1})\otimes b_{j_{h+1}}.
\]
It follows by induction on $h$ that $a_i\otimes(b_j+g)=a_s\otimes(b_t+g')$.

Now suppose that $a_i\otimes(b_j+g')=a_s\otimes(b_t+g')$.  Then there exists a sequence of ways to write that element 
\[
 a_{i_0}\otimes(b_{j_0}+g_0),\ a_{i_0}\otimes(b_{j_1}+g'_0),\ (a_{i_0}+g'_0)\otimes b_{j_1},\ 
 (a_{i_1}+g_1)\otimes b_{j_1},\hdots ,\ a_{i_{\ell}}\otimes(b_{j_{\ell}}+g_{\ell})
 \]
 such that $a_i=a_{i_0},\ b_j=b_{j_0},\ a_s=a_{i_{\ell}}$ and $b_t=b_{j_{\ell}}$. It follows that the edges 
 $v_iw_j$ and $v_sb_t$ are connected by the adjacent edges 
 $v_{i_0}w_{j_0},\ v_{i_0}w_{j_1},\ v_{i_1}w_{j_1},\ 
 v_{i_1}w_{j_2},\hdots ,\ v_{i_{\ell}}w_{j_{\ell}}$ and the result follows. 
 \end{proof}

\begin{cor}
Let $S$ be a numerical semigroup with relative ideals $A$ and $B$ and let $z$ be an integer. Then $\tau_z(A,B)$ is equal to maximum of $0$ and one less than the number of connected components of $\Gamma_z(A,B)$.
\end{cor}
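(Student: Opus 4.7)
The plan is to read off the corollary directly from Proposition \ref{prop:graph} together with the definition of $\tau_z(A,B)$. By definition, $\tau_z(A,B) = \max\{0, |\chi^{-1}(z)| - 1\}$, where $\chi^{-1}(z)$ is the set of tensors $a\otimes b$ in $A\otimes_S B$ satisfying $a+b = z$. So the entire content is to identify $|\chi^{-1}(z)|$ with the number of connected components of $\Gamma_z(A,B)$.

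First, I would invoke Proposition \ref{prop:graph}, which provides a bijection between the connected components of $\Gamma_z(A,B)$ and the elements $a\otimes b \in A\otimes_S B$ with $a+b=z$, via the rule that the component containing $v_i$ corresponds to $a_i\otimes(z-a_i)$. This bijection identifies the cardinality $|\chi^{-1}(z)|$ with the number of connected components of $\Gamma_z(A,B)$.

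Substituting this equality into the definition of $\tau_z(A,B)$ yields
\[
\tau_z(A,B) = \max\{0,\, c(\Gamma_z(A,B)) - 1\},
\]
where $c(\Gamma_z(A,B))$ denotes the number of connected components of $\Gamma_z(A,B)$. This is the claimed formula.

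There is no real obstacle here; the corollary is a direct repackaging of Proposition \ref{prop:graph}. The only minor point worth noting is the edge case: if no $a_i + b_j + s = z$ holds for any $s \in S$, then the graph has no edges incident to the relevant vertices and $\chi^{-1}(z)$ may be empty, in which case the $\max$ with $0$ is needed to absorb the $-1$. This is automatic from the formulation.
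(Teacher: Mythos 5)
Your proof is correct and matches the paper's (implicit) reasoning exactly: the paper states this corollary without proof as an immediate consequence of Proposition \ref{prop:graph}, which is precisely the bijection between connected components and elements of $\chi^{-1}(z)$ that you invoke before substituting into the definition of $\tau_z(A,B)$.
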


For the next proposition note that if $R$ is a $k$-subalgebra of $k[x]$, then the condition that 
$\deg(IJ)\setminus (\deg(I)+\deg(J))$ is a always a finite set.

\begin{prop}  \label{prop:MtoG}
Let $R$ be a $k$-subalgebra of $k[x_1,\hdots,x_n]$.  Let $I$ and $J$ be finitely generated fractional ideals of $R$. Choose a monomial ordering for $k[x_1,\hdots,x_n]$ and suppose that $\deg(IJ)\setminus (\deg(I)+\deg(J))$ is a finite set. Then
\[
\len(\T(I\otimes_RJ))\leq\tau(\deg(I),\deg(J))-|\deg(IJ)\setminus (\deg(I)+\deg(J))|.
\]
\end{prop}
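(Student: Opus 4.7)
My plan is to combine the short exact sequence
\[
0 \to \T(I \otimes_R J) \to I \otimes_R J \xrightarrow{\pi} IJ \to 0
\]
with a Hilbert-function-style count graded by the monomial order on $k[x_1,\ldots,x_n]$. Setting $A = \deg(I)$, $B = \deg(J)$, $S = \deg(R)$, and $\Delta = \deg(IJ) \setminus (A + B)$, and noting $\tau(A, B) = \sum_{z \in A+B}(|\chi^{-1}(z)| - 1)$, the target inequality rearranges to $\len(\T(I \otimes_R J)) + |\Delta| \leq \tau(A, B)$.

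The main construction is as follows: for each equivalence class $[a \otimes b] \in A \otimes_S B$, choose lifts $f_a \in I$ and $g_b \in J$ of the prescribed degrees and form the pure tensor $f_a \otimes g_b \in I \otimes_R J$. Using $R$-bilinearity together with the fact that for $r \in R$ with $\deg(r) = s$ we have $r \cdot f_a \equiv c \cdot f_{a + s}$ modulo strictly lower-degree elements (with $c \in k \setminus \{0\}$), the semigroup relation $(a + s) \otimes b \sim a \otimes (b + s)$ is respected up to terms of strictly smaller total degree. This produces a $k$-linear map $\Phi$ from the formal $k$-vector space with basis $A \otimes_S B$ to the associated graded of $I \otimes_R J$ for the total-degree filtration. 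By Remark \ref{rem:quotient}, the graded piece of $IJ$ in degree $z$ is $1$-dimensional if $z \in \deg(IJ)$ and zero otherwise, while $\Phi$ supplies $|\chi^{-1}(z)|$ dimensions in each degree $z \in A + B$, with distinct equivalence classes giving $k$-linearly independent contributions by Proposition \ref{prop:graph}.

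I would then apply $\pi$ degree by degree. In each degree $z \in A + B$, the $|\chi^{-1}(z)|$-dimensional space from $\Phi$ surjects onto the $1$-dimensional graded piece of $IJ$, contributing at most $\tau_z(A, B) = |\chi^{-1}(z)| - 1$ to $\T(I \otimes_R J)$. For each $z \in \Delta$ the graded piece of $IJ$ is also $1$-dimensional, but since no pure tensor has leading degree $z$, the corresponding element of $IJ$ arises as $\pi(\xi)$ for some $\xi$ built from higher-degree pure tensors whose leading terms cancel; this cancellation forces a $k$-linear dependence among those higher-degree contributors, consuming one of the ``excess'' dimensions counted by $\tau_{z'}$ for some $z' > z$ in $A + B$. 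Summing across all $z$ yields $\len(\T(I \otimes_R J)) \leq \tau(A, B) - |\Delta|$.

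The main obstacle will be verifying that distinct $z \in \Delta$ really do consume distinct excess dimensions — equivalently, that the cancellation elements $\xi$ associated to different elements of $\Delta$ are $k$-linearly independent in $I \otimes_R J$ modulo the torsion. This is a bookkeeping exercise tracking leading coefficients through the degree filtration, relying on Remark \ref{rem:quotient} as the length-via-degree-counting tool and on Proposition \ref{prop:graph} to identify equivalence classes in $A \otimes_S B$ with connected components of the bipartite graph $\Gamma_z(A,B)$.
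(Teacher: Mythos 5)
Your outline follows the same basic strategy as the paper's proof: put each element of $I\otimes_RJ$ into a normal form indexed by the classes of $A\otimes_SB$, filter by leading degree, and compare the number of class representatives against the number of degrees appearing in the image of $\pi$. One assertion should be deleted outright: Proposition \ref{prop:graph} is a statement about the combinatorial object $A\otimes_SB$ for a numerical semigroup and says nothing about $k$-linear independence of the tensors $f_a\otimes g_b$ inside $I\otimes_RJ$. Distinct classes of $A\otimes_SB$ can very well give $k$-dependent (even equal) elements of $I\otimes_RJ$ when $I$ and $J$ are not monomial --- this is exactly why the inequality can be strict. The claim is harmless here only because the upper bound needs the representatives to \emph{span} each filtration quotient, not to be independent; but as stated it signals a wrong picture of what the graded pieces look like.

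The genuine gap is the step you defer as ``bookkeeping.'' A degree-by-degree count of $\ker(\pi)$ against the graded pieces only yields $\len(\T(I\otimes_RJ))\leq\tau(A,B)$; the correction $-|\Delta|$ comes precisely from the failure of strictness of the filtration, and extracting it requires two things you have not supplied. (i) For $z\in\Delta$ the element $\xi$ with $\deg(\pi(\xi))=z$ need not immediately furnish a ``consumed'' excess dimension: its leading part may coincide with the leading part of an honest element of $\ker(\pi)$, in which case you must subtract and repeat with an element of strictly smaller leading degree, and you must prove this process terminates --- this is where the descending chain condition of the monomial order enters, and it is also what guarantees that $\ker(\pi)$ sits inside a finite-dimensional subspace so that the sum over degrees computes its length at all. (ii) You must show the consumed dimensions attached to distinct $z\in\Delta$ are independent. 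The paper avoids matching consumptions to elements of $\Delta$ altogether: it iterates $S_{j+1}=\deg(\pi(M_j))$ starting from $S_0=\{z:\tau_z(A,B)\neq 0\}$, proves stabilization at some $S_N$ via the DCC, shows $\ker(\pi)\subseteq M_N$ and $\Delta\subseteq S_N$, and then the single identity $\len(\ker(\pi))=\len(M_N)-|S_N|$ does all the accounting at once, since each $z\in S_N\setminus(A+B)$ contributes $0$ to the generator count of $M_N$ but $1$ to $|S_N|$. Without some rigorous substitute for this device --- a strictness/spectral-sequence argument for the filtration, made to terminate --- your proposal is a correct heuristic rather than a proof.
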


\begin{proof}
Since replacing $I$ with $fI$ and $J$ with $gJ$ for any $f,g$ in $R$ does not affect the values on either side of the inequality, we may assume that $I$ and $J$ are ideals. Let $A=\deg(I)$, $B=\deg(J)$ and $S=\deg(R)$. Let $<$ denote the ordering on $\mathbb{N}_0^n$ which is associated to the ordering on the monomials of $k[x_1,\hdots,x_n]$. Note that $<$ extends naturally to an ordering on $\mathbb{Z}^n$. 
%Also any descending chain of elements in $A$, $B$, $A+B$ or $S$ is finite. 
Choose $k$-bases of monic polynomials $\{f_z|\ z\in A\}$, $\{g_z|\ z\in B\}$ and $\{r_z|\ z\in S\}$ for $I$, $J$ and $R$ respectively, and write
\[ 
f_a=x^a+\sum_{i<a\in\mathbb{Z}^n}\alpha_ix^i,\quad g_b=x^b+\sum_{i<b\in\mathbb{Z}^n}\beta_ix^i\quad\textrm{and}\quad r_s=x^s+\sum_{i<s\in\mathbb{Z}^n}\epsilon_ix^i 
\]
with $\alpha_i$, $\beta_i$ and $\epsilon_i$ in $k$. For each $s$ in $S$ we have $f_{a+s}=r_sf_a+\sum_{x<a+s}\delta_xf_x$ for some $\delta_x$ in $k$. Therefore
\begin{align*}
f_{a+s}\otimes g_b&\textstyle =r_sf_a\otimes g_b+\sum_{x<a+s}\delta_xf_x\otimes g_b\\ 
&\textstyle =f_{a}\otimes r_sg_{b}+\sum_{x<a+s}\delta_xf_x\otimes g_b\\ 
&\textstyle =f_a\otimes g_{b+s}+\sum_{y<b+s}\gamma_{y}f_a\otimes g_y+\sum_{x<a+s}\delta_xf_x\otimes g_b
\end{align*}
for some $\delta_x$ and $\gamma_y$ in $k$.  It follows that for any $a,\ c$ in $A$ and $b,\ d$ in $B$ such that $a\otimes b=c\otimes d$ we have 
\[
\textstyle f_a\otimes g_b=f_c\otimes g_d+\sum_{x+y<a+b}\alpha_{xy}f_x\otimes g_y\textrm{ with }\alpha_{xy}\in k.
\]

For each $z$ in $A+B$ let $c_z=\tau_z(A,B)$, and fix $a_{z0},a_{z1},\hdots,a_{zc_z}$ in $A$ such that 
\[
\chi_{A,B}^{-1}(z)=\{a_{z0}\otimes(z-a_{z0}),\hdots, a_{zc_z}\otimes(z-a_{zc_z})\}.
\]
Let $\gamma$ be an element of  $I\otimes_R J$. Then $\gamma=\sum_{a\in A,b\in B}\alpha_{ab}f_a\otimes g_b$ for some $\alpha_{ab}$ in $k$. Starting with $z=a+b$ maximal such that $\alpha_{ab}\neq 0$ and proceeding inductively as $z$ decreases, we may replace each term $\alpha_{ab}f_a\otimes g_b$ with a sum of the form  
\[
\textstyle\alpha_{ab}f_{a_{zi}}\otimes g_{z-a_{zi}}+\sum_{x+y<z}\beta_{xy}f_x\otimes g_y,
\] so that
$\gamma=\textstyle\sum_{z\in A+B}\sum_{i=0}^{c_z}\alpha_{zi}f_{a_{zi}}\otimes g_{z-a_{zi}}$
for some $\alpha_{zi}$ in $k$.  

We may assume that there are only finitely many $z$ in $\mathbb{Z}^n$ such that $\tau_z(A,B)\neq 0$; otherwise $\tau(A,B)=\infty$ in which case the inequality in the proposition holds.

Let $S_0=\{z\in A+B|\ \tau_z(A,B)\neq 0\}$.  For each $i\geq 0$ let $M_j$ be the $k$-vector space generated by elements of the form $f_{a_{zi}}\otimes g_{z-a_{zi}}$ with $z$ in $S_j$ and let $S_{j+1}=\deg(\pi(M_j))$.  
Then $S_j\subseteq S_{j+1}$ and $M_j\subseteq M_{j+1}$ for all $j$ in $\mathbb{N}_0$.  
Note that $S_0$ is a finite set. Choose $j$ in $\mathbb{N}_0$ such that $S_h$ is a finite set for all $h\leq j$.  Since $f_{a_{zi}}g_{z-a_{zi}}$ is a finite sum of scalars times monomials for each $i$ and $z$, it follows that the set $S_{j+1}=\deg\{\sum_{z,i}\alpha_{zi}f_{a_{zi}}g_{z-a_{zi}}|\ z\in S_j\}$ is also finite; hence $M_j$ is a finite dimensional $k$-vector space for all $j$ in $\mathbb{N}_0$. Fix $j$ in $\mathbb{N}_0$ and choose 
$z_{j+1}$ in $S_{j+1}\setminus S_j$. Then $z_{j+1}=\deg(\sum_{z\in S_j, i}\beta_{zi}f_{a_{zi}}g_{z-a_{zi}})$ for some $\beta_{zi}$ in $k$.  Since $z_{j+1}$ is not in $S_j$, it follows that there exists $z_j\in S_j\setminus S_{j-1}$ such that for some $i$ we have $\beta_{z_ji}\neq 0$ and $z_j>z_{j+1}$ (set $S_{-1}=\emptyset$ for the case $j=0$). It follows that an increasing sequence $S_0\subsetneq S_1\subsetneq S_2\subsetneq \hdots$ yields a decreasing sequence $z_0>z_1>z_2>\hdots$ in $v(IJ)$.  The reason that we must get a decreasing sequence is that for every element of
 $S_j\setminus S_{j-1}$ we have a finite decreasing sequence of the form $z_0,\hdots,z_j$ and at least one of these must extend to a finite decreasing sequence of the form $z_0,\hdots,z_{j+1}$. Since $<$ satisfies the descending chain condition on $v(IJ)$, it follows that $z_0,\hdots,z_j$ cannot extend to an infinite decreasing sequence; hence the sequence 
$S_0\subsetneq S_1\subsetneq S_2\subsetneq \hdots$ stabilizes.  Choose $N$ in $\mathbb{N}_0$ such that $S_N=S_{N+1}$. 

Let $\gamma=\textstyle\sum_{z\in A+B}\sum_{i=0}^{c_z}\alpha_{zi}f_{a_{zi}}\otimes g_{z-a_{zi}}$ and suppose that there exists $y$ in $(A+B)\setminus S_N$ such that $\alpha_{y0}\neq 0$. For all $z\in (A+B)\setminus S_N$ we have $c_z:=\tau_z(A,B)=0$, as $\{z\in A+B|\ \tau_z(A,B)\neq 0\}\subseteq S_N$.  Let $y$ be maximal among all such choices.  Then we claim $\gamma$ is not in $\ker(\pi)$.  Otherwise 
\[
\textstyle\deg(\sum_{z\in A+B, z>y}\sum_{i=0}^{c_z}\alpha_{zi}f_{a_{zi}}g_{z-a_{zi}})=y\in S_N,
\]
which would be a contradiction.  Thus $\ker(\pi)\subseteq M_N$.  

Let $y$ be an element of $\deg(IJ)\setminus(\deg(I)+\deg(J))$.  Choose 
\[
\delta=\textstyle\sum_{z\in A+B}\sum_{i=0}^{c_z}\alpha_{zi}f_{a_{zi}}\otimes g_{z-a_{zi}}
\]
such that $\deg(\pi(\delta))=y$.  Assume that $y$ is not in $S_N$. Then there exists $w$ in $(A+B)\setminus S_N$ such that $\alpha_{w0}\neq 0$ with $w>y$. We may choose $w$ to be maximal.  However, since the coefficient of $x^w$ in $\pi(\delta)$ is zero, we have 
$w=\deg(\sum_{z\in A+B, z>x}\sum_{i=0}^{c_z}\alpha_{zi}f_{a_{zi}}g_{z-a_{zi}})$.  Since $w$ was chosen to be maximal this implies $w$ is in $S_N$, which is a contradiction. Thus $\deg(IJ)\setminus(\deg(I)+\deg(J))\subset S_N$. 
We have the following: 
\begin{align*}
\len(\T(I\otimes_RJ))&\stackrel{(1)}=\len(\ker(\pi))\\
&\stackrel{(2)}=\len(M_N)-\len(\pi(M_N))\\
&\stackrel{(3)}=\len(M_N)-|S_N|\\
&\textstyle\stackrel{(4)}\leq(\sum_{z\in S_N}|\chi^{-1}_{A,B}(z)|)-|S_N|\\
&\textstyle\stackrel{(5)}=(\sum_{z\in S_N\cap(A+B)}|\chi^{-1}_{A,B}(z)|)-|S_N|\\
&\textstyle\stackrel{(6)}=(\sum_{z\in S_N\cap(A+B)}(|\chi^{-1}_{A,B}(z)|-1)) + |S_N\cap (A+B)|-|S_N|\\
&\textstyle\stackrel{(7)}=(\sum_{z\in S_N\cap(A+B)}\tau_z(A,B))+|S_N\cap (A+B)|-|S_N|\\
&\textstyle\stackrel{(8)}=\tau(A,B)+|S_N\cap(A+B)|-|S_N|\\
&\textstyle\stackrel{(9)}=\tau(\deg(I),\deg(J))+|S_N\cap(\deg(I)+\deg(J))|-|S_N|\\
&\textstyle\stackrel{(10)}=\tau(\deg(I),\deg(J))-|S_N\setminus(\deg(I)+\deg(J))|\\
&\stackrel{(11)}\leq\tau(\deg(I),\deg(J))-|\deg(IJ)\setminus(\deg(I)+\deg(J))|.
\end{align*}

The first and sixth steps above are clear. The second step above follows from the inclusion $\ker(\pi)\subset M_N$.  Since $\pi(M_N)$ is a $k$-vector space, its length is just the cardinality of $\deg(\pi(M_N))$, which is $|S_N|$, and the third step follows. By construction the generators of $M_N$ (possibly non-minimal) are in one to one correspondence with $\chi^{-1}_{A,B}(S_N)$, and the fourth step follows.   
Since $|\chi^{-1}_{A,B}(z)|$ is non-zero if and only if $z$ is in $A+B$, we get the fifth and seventh steps. Since $\{z\in \mathbb{Z}|\ \tau_z(A,B)\neq 0\}\subseteq S_N\cap(A+B)$, we have the eighth step.  The ninth step simply applies the identities $A=\deg(I)$ and $B=\deg(J)$. Since $S_N$ is a finite set the tenth step follows from basic set theory. The inclusion $\deg(IJ)\setminus(\deg(I)+\deg(J))\subset S_N$ implies the last step and the result follows.
\end{proof}

\begin{lem} \label{prop:up}Let $R$ be a one-dimensional analytically irreducible residually rational ring. Let $I$ and $J$ be fractional ideals of $R$ with $f,f'$ in $I$ and $g,g'$ in $J$.  Suppose that 
$v(f)\otimes v(g)=v(f')\otimes v(g')$ in $v(I)\otimes_{v(R)}v(J)$.
Then
\[
\textstyle f\otimes g=u f'\otimes g'+\sum_{i=1}^n a_i\otimes b_i\in I\otimes_RJ
\]
where $u$ is a unit, each $a_i$ is in $I$ and each $b_i$ is in $J$ such that $v(a_ib_i)>v(fg)$.
\end{lem}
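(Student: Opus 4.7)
The plan is to induct on the length $n$ of a chain of elementary moves $(x_0,y_0)\sim(x_1,y_1)\sim\cdots\sim(x_n,y_n)$ in $v(I)\times v(J)$ witnessing the identity $v(f)\otimes v(g)=v(f')\otimes v(g')$, where each move is of the form $(s+a,b)\sim(a,s+b)$ with $s\in v(R)$. The key auxiliary fact is a single-step lemma, which I isolate first.

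For the single-step lemma, fix lifts $\phi,\phi''\in I$ and $\psi,\psi''\in J$ with $v(\phi)=x$, $v(\phi'')=x'$, $v(\psi)=y$, $v(\psi'')=y'$, where $x=s+x'$ and $y'=s+y$ (the reverse direction is handled symmetrically). Choose $r\in R$ with $v(r)=s$; then $v(r\phi'')=v(\phi)$ and $v(r\psi)=v(\psi'')$, so by Remark \ref{prop:snug}\eqref{unit} there exist units $u_1,u_2\in R$ with $\phi=u_1 r\phi''+h$ and $r\psi=u_2\psi''+k$, where $v(h)>v(\phi)$ and $v(k)>v(\psi'')$. Substituting and using $r\phi''\otimes\psi=\phi''\otimes r\psi$ in $I\otimes_R J$ yields
\[
\phi\otimes\psi=u_1u_2\,\phi''\otimes\psi''+u_1\phi''\otimes k+h\otimes\psi,
\]
and both $v(\phi'' k)$ and $v(h\psi)$ strictly exceed $v(\phi\psi)=v(\phi''\psi'')$, so the last two summands have the requisite higher-order form.

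To assemble the chain, choose lifts $\phi_i\in I$ of $x_i$ and $\psi_i\in J$ of $y_i$, with $\phi_0=f$, $\psi_0=g$, $\phi_n=f'$, $\psi_n=g'$. Repeated application of the single-step lemma produces units $u_0,\ldots,u_{n-1}\in R$ and finite ``higher-order'' tails $T_i=\sum a\otimes b$ (with $a\in I$, $b\in J$, $v(ab)>v(fg)$) such that $\phi_i\otimes\psi_i=u_i\phi_{i+1}\otimes\psi_{i+1}+T_i$. Telescoping yields
\[
f\otimes g=u\,f'\otimes g'+\sum_{i=0}^{n-1}(u_0\cdots u_{i-1})T_i,
\]
with $u=u_0\cdots u_{n-1}$ a unit in $R$; since units have $v$-value zero, every residual term $(u_0\cdots u_{i-1})a\otimes b$ is still of the form $a'\otimes b$ with $a'\in I$ and $v(a'b)>v(fg)$. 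The base case $n=0$, where $v(f)=v(f')$ and $v(g)=v(g')$, follows from two direct applications of Remark \ref{prop:snug}\eqref{unit}.

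The main technical care is in verifying that the ``higher-order'' condition $v(ab)>v(fg)$ survives both the telescoping and the subsequent left-multiplications by unit factors coming from later steps; this reduces to the observation that units in $R$ have valuation zero. A secondary bookkeeping point is that $v(\phi_i\psi_i)$ is constant along the chain, which holds because $x_i+y_i$ is preserved by every elementary move.
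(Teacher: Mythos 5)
Your proposal is correct and follows essentially the same route as the paper: a one-step computation using Remark \ref{prop:snug}\eqref{unit} to peel off a unit multiple plus higher-valuation error terms, followed by induction/telescoping along a chain of elementary moves $(s+a,b)\sim(a,s+b)$ witnessing the equality in $v(I)\otimes_{v(R)}v(J)$. Your write-up is in fact slightly more careful than the paper's about tracking the unit factors and verifying that $v(a_ib_i)>v(fg)$ is preserved under the telescoping.
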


\begin{proof} 
%Suppose $v(f)\otimes_{v(R)}v(g)=v(f')\otimes_{v(R)}v(g')$. 
%Suppose the two expressions differ only by one of the generating relations so that
Suppose that $v(f)=v(f')+s$ for some $s$ in $v(R)$.  
%It follows that $v(g')=v(g)+s$. 
Let $r$ be in $R$ such that $v(r)=s$. By Remark \ref{prop:snug}~\eqref{unit} there are units $u$ and $u'$ such that 
$v(f-urf')>v(f)$ and $v(rg-u'g')>v(g')$. Since 
\[
f\otimes_Rg 
=uu' f'\otimes g'+ (f-urf')\otimes_Rg + uf\otimes(rg-u'g'),
\]
it follows that the result holds in this special case.  Similarly the result holds when 
$v(f)+s=v(f')$.  

In general we may choose sequences $f_1,\hdots, f_n$ and 
$g_1,\hdots, g_n$ with $f=f_1$, $g=g_1$, $f'=f_n$ and $g'=g_n$ such that for all $i$, $v(f_ig_i)=v(fg)$ and there exists an element $s_i$ in $v(R)$ where either
$v(f_i)=v(f_{i+1})+s_i$ or $v(f_i)+s_i=v(f_{i+1})$. Now the general result follows by induction from the elementary case.
%By induction we may assume that the result holds for sequences of length $n-1$.   Therefore, 
%\[
%f_1\otimes g_1=u f_{n-1}\otimes g_{n-1}+\sum_i a_i\otimes b_i\textrm{ and }
%f_{n-1}\otimes g_{n-1}=u'f_{n}\otimes g_{n}+\sum_j a'_j\otimes b'_j
%\]
%where $u$ and $u'$ are units in $R$, $v(a_ib_i)>v(fg)$ and $v(a'_jb'_j)>v(fg)$.  Then 
%\[
%f_1\otimes g_1=uu'f_{n}\otimes g_{n}+\sum_i a_i\otimes b_i+\sum_jua'_j\otimes b'_j
%\]
%and the result follows.
\end{proof}

\begin{lem} 
\label{prop:ifCthenTF} Let $R$ be a one-dimensional analytically irreducible residually rational ring. 
Let $I$ and $J$ be fractional ideals of $R$ and let 
$\gamma=\sum_{i=1}^n f_i\otimes g_i$ be an element of $\T(I\otimes_R J)$. If 
%$v(f_ig_i)>\sup\{z\in\mathbb{Z}|\ \tau_z(v(I), v(J))\neq 0\}$ 
\[
v(f_ig_i)\geq 2F+2+\min(v(I))+\min(v(J))
\] 
for all $i$, then $\gamma=0$.
\end{lem}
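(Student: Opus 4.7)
\medskip

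\noindent\textbf{Proof proposal.} The plan is to exploit the conductor $(R:_K \overline{R}) = \n^{F+1}$ (Remark \ref{prop:snug}\eqref{snug}) to show that every individual tensor $f_i \otimes g_i$ collapses to an $R$-multiple of one fixed elementary tensor, and then use that $\gamma$ lies in $\ker \pi$ to conclude the coefficients sum to zero.

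Fix generators $a_0 \in I$ and $b_0 \in J$ with $v(a_0) = \alpha := \min v(I)$ and $v(b_0) = \beta := \min v(J)$. The key elementary fact is this: for $x \in K$, if $v(x) \geq F+1$ then $x \in \n^{F+1} \subseteq R$. Consequently, whenever $f \in I$ satisfies $v(f) \geq F+1+\alpha$, one has $f/a_0 \in R$; similarly for $J$ and $b_0$. So first I would split the terms into two cases. Since $v(f_i) + v(g_i) \geq 2F+2+\alpha+\beta = (F+1+\alpha)+(F+1+\beta)$, for each $i$ either $v(f_i) \geq F+1+\alpha$ (Case A) or $v(g_i) \geq F+1+\beta$ (Case B).

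In Case A, write $f_i = r_i a_0$ with $r_i \in R$; then $r_i g_i \in J$ has valuation $v(f_i g_i) - \alpha \geq 2F+2+\beta \geq F+1+\beta$, so $r_i g_i = s_i' b_0$ for some $s_i' \in R$, yielding
\[
f_i \otimes g_i = a_0 \otimes (r_i g_i) = a_0 \otimes s_i' b_0 = s_i'\,(a_0 \otimes b_0).
\]
Case B is symmetric: write $g_i = s_i b_0$ with $s_i \in R$, then $s_i f_i = t_i a_0$ with $t_i \in R$, giving $f_i \otimes g_i = t_i\,(a_0 \otimes b_0)$. Either way, there is $c_i \in R$ with $f_i \otimes g_i = c_i(a_0 \otimes b_0)$, so
\[
\gamma = \Bigl(\textstyle\sum_{i=1}^n c_i\Bigr)(a_0 \otimes b_0).
\]

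Finally, since $\gamma \in \T(I \otimes_R J) = \ker \pi$ (torsion-freeness of $IJ$ plus the argument at the start of Section 1), applying $\pi$ gives $(\sum_i c_i)\, a_0 b_0 = \sum_i f_i g_i = 0$. As $R$ is a domain and $a_0 b_0 \neq 0$, we get $\sum_i c_i = 0$, hence $\gamma = 0$. The only potentially tricky step is verifying that the two cases really exhaust all $i$ and that each reduction stays inside $R$ (not just $\overline{R}$); both follow immediately from $\n^{F+1} \subseteq R$ together with the hypothesis $v(f_i g_i) \geq 2F+2+\alpha+\beta$, so no further obstacle arises.
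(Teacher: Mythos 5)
Your proof is correct and follows essentially the same strategy as the paper's: use the conductor $(R:_K\overline{R})=\n^{F+1}$ to move factors of large valuation across the tensor sign onto minimal-valuation generators, and then apply $\pi$ and the domain hypothesis. The only cosmetic difference is that you collapse $\gamma$ all the way to $(\sum_i c_i)(a_0\otimes b_0)$ via a per-term case split, whereas the paper stops at the form $x\otimes\sum_i r_i'g_i'$; both endings yield $\gamma=0$ by the same argument.
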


\begin{proof} 
%Let $c=\sup\{z\in\mathbb{Z}|\ \tau_z(v(I), v(J))\neq 0\}$. We may index the $f_i$ and $g_i$ so that $d=v(f_1g_1)=v(f_2g_2)=\hdots =v(f_rg_r)$ and $v(f_ig_i)>d$ for $i>r$.  By Lemma \ref{prop:up} for $i=1,\hdots ,r$ we have $f_i\otimes g_i=u_if_1\otimes g_1+\sum_j a_{ij}\otimes b_{ij}$ where $u_i$ a unit in $R$ and  
%$v(a_{ij}b_{ij})>d$.  Thus $\gamma=\left(\sum_{i=1}^ru_if_1\right)\otimes g_1+\sum_{ij}a_{ij}\otimes b_{ij}+\sum_{i=r+1}^nf_i\otimes g_i$.
%Since $\gamma\in\ker(\pi)$, it follows that $\sum_{i=1}^r u_i f_1g_1=-\sum_{ij}a_{ij}b_{ij}-\sum_{r+1}^nf_ig_i$; hence
%\[
%\textstyle v(\sum_{i=1}^ru_i f_1g_1)=v(\sum_{ij}a_{ij}b_{ij}+\sum_{r+1}^nf_ig_i)>d.
%\]
Let $x$ be in $I$ and $y$ be in $J$ such that $v(x)$ and $v(y)$ are minimal. 
%By induction on $d$ we can write $\gamma$ as $\sum_{i=1}^n f'_i\otimes g'_i$ so that $v(f'_ig'_i)>2F+2+v(x)+v(y)$ for all $i$. 
If $v(f_i)<F+1+v(y)$, then $v(g_i)>F+1+v(y)$ and $g_i=r_iy$ for some $r_i$ in $\mathfrak{n}^{F+1}$. By Remark \ref{prop:snug}~\eqref{snug} we have that $r_i$ is in $R$, so
$f_i\otimes g_i=f_i\otimes r_iy=r_if_i\otimes y$.  
Since $v(r_if_i)>F+1+v(x)$, we may write $\gamma$ as $\sum_{i=1}^nf_i'\otimes g_i'$ where $v(f_i')>F+1+v(x)$ for all $i$. For each $i$ there exists 
$r'_i$ in $\mathfrak{n}^{F+1}$ such that $f_i'=r'_ix$.  Thus
\[
\textstyle\gamma=\sum_{i=1}^nf_i'\otimes g_i'=\sum_{i=1}^nr'_ix\otimes g_i'=x\otimes \sum_{i=1}^nr'_ig_i'.
\]
Thus $0=\pi(\gamma)=x\sum_{i=1}^nr'_ig_i'$; hence $\gamma=x\otimes \sum_{i=1}^nr'_ig_i'=0$.
\end{proof}

\begin{prop} \label{prop:inequality} Let $R$ be a one-dimensional analytically irreducible residually rational ring. Let $I$ and $J$ be fractional ideals of $R$. Then
\[ 
\len(\T(I\otimes_RJ))\leq \tau(v(I),v(J))-|v(IJ)\diagdown(v(I)+v(J))|.
\]
\end{prop}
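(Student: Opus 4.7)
The plan is to mirror Proposition \ref{prop:MtoG} with three substitutions: Lemma \ref{prop:up} replaces the monomial reduction used in the graded case; Lemma \ref{prop:ifCthenTF} provides the termination analog of the descending chain condition (since in the valuation setting valuations are bounded below but unbounded above); and Remark \ref{lem:quotient} converts length computations into valuation-set cardinalities.

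First I would assume $\tau(v(I),v(J)) < \infty$, else the bound is vacuous. Set $A = v(I)$, $B = v(J)$, and $N_0 = 2F + 2 + \min(A) + \min(B)$. Let $V$ denote the $R$-submodule of $I \otimes_R J$ generated by simple tensors $a \otimes b$ with $v(ab) \geq N_0$; by Lemma \ref{prop:ifCthenTF}, $V \cap \T(I \otimes_R J) = 0$. Passing to the completion of $R$ (which preserves all hypotheses and the length of the torsion) and fixing a coefficient field $k \subseteq R$ via Cohen's structure theorem, I would choose for each $z \in A + B$ canonical representatives $a_{z,0},\ldots,a_{z,c_z} \in A$ with $\chi^{-1}(z) = \{a_{z,i} \otimes (z - a_{z,i})\}$, together with lifts $f_{z,i} \in I$, $g_{z,i} \in J$. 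Iterating Lemma \ref{prop:up} (and using Remark \ref{prop:snug}(2) to decompose each coefficient $u \in R$ as $c + u'$ with $c \in k$, $u' \in \mathfrak{m}$), any $\gamma \in I \otimes_R J$ admits an expression as a $k$-linear combination of the $f_{z,i} \otimes g_{z,i}$ for $z < N_0$ plus an element of $V$. The iteration terminates because each step strictly increases the minimum valuation of the residual tensors.

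Next, define $S_0 = \{z : c_z > 0\}$ (finite by assumption), $M_j = k\text{-span}\{f_{z,i} \otimes g_{z,i} : z \in S_j,\ 0 \leq i \leq c_z\}$, and $S_{j+1} = v(\pi(M_j))$. Each $S_j$ is finite because $\dim_k M_j < \infty$ and $|v(W)| \leq \dim_k W$ for any finite-dimensional $k$-subspace $W$ of a module with valuation. By DCC on $<$ restricted to the bounded-below set $v(IJ) \subset \mathbb{Z}$, the chain stabilizes at some $N$ with $S_N = S_{N+1}$. The maximality argument of Proposition \ref{prop:MtoG}, suitably transposed (now minimizing over valuation, since low valuation plays the role of leading term), shows that the image of $\T(I \otimes_R J)$ in $\tilde X := (I \otimes_R J)/V$ lies in the image of $M_N$, and simultaneously that $v(IJ) \setminus (A + B) \subseteq S_N$. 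Replicating the graded length computation with $\dim_k$ replacing graded dimensions yields
\[
\len(\T(I \otimes_R J)) \leq \textstyle\sum_{z \in S_N}|\chi^{-1}(z)| - |S_N| = \tau(A,B) - |S_N \setminus (A + B)| \leq \tau(A,B) - |v(IJ) \setminus (A + B)|.
\]

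The main obstacle is the valuation analog of the graded dimension identity: one needs $\dim_k W = |v(W)|$ for any finite-dimensional $k$-subspace $W$ of a module with valuation. This is established by inductively applying Remark \ref{prop:snug}(2) to replace any two basis elements of equal valuation with one of strictly higher valuation, producing a basis with pairwise distinct valuations, so that $|v(W)|$ equals the number of basis elements. With this identity in hand, every step of the graded computation goes through verbatim, with the only novelty being the careful accounting of the cutoff at $N_0$ and the identification of residual terms with the submodule $V$.
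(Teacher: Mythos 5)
You take a genuinely different route from the paper, which does not transpose Proposition \ref{prop:MtoG} at all. The paper filters $I\otimes_RJ$ by the $R$-submodules $M_c=\{\sum a_i\otimes b_i\mid v(a_ib_i)\geq c\}$, uses Lemma \ref{prop:ifCthenTF} to see that $\pi$ carries $M_z$ (with $z=2F+2+\min(v(I))+\min(v(J))$) isomorphically onto $N_z=\{x\in IJ\mid v(x)\geq z\}$, and then computes $\len(\T(I\otimes_RJ))=\len((I\otimes_RJ)/M_z)-\len((IJ)/N_z)$, bounding the first term by $\sum_{c<z}\len(M_c/M_{c+1})$ with each layer controlled by Lemma \ref{prop:up} and evaluating the second by Remark \ref{lem:quotient}. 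Because $\m M_c\subseteq M_{c+1}$, each layer $M_c/M_{c+1}$ is automatically a $k$-vector space, so no coefficient field, no completion, and no stabilizing chain of sets $S_j$ is ever needed.

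That comparison exposes two genuine gaps in your argument. First, Cohen's structure theorem supplies a coefficient field only in equal characteristic, whereas the proposition admits mixed-characteristic rings: for example $R=\mathbb{Z}_p[y]/(y^2-p^3)\subseteq V=\mathbb{Z}_p[\pi]$ with $\pi^2=p$ is a one-dimensional analytically irreducible residually rational local domain with $\m=(t^2,t^3)$ for $t=\pi$ and $v(R)=\langle 2,3\rangle$, and there is no copy of $k=\mathbb{F}_p$ inside $R$, so your $\kspan$ bookkeeping has no meaning there. Second, and independently of characteristic, your stabilization step invokes the wrong principle: in the valuation setting cancellation happens among the terms of \emph{minimal} valuation and produces elements of strictly \emph{larger} valuation, so a non-stabilizing chain $S_0\subsetneq S_1\subsetneq\cdots$ would yield a strictly increasing sequence in $v(IJ)$, which is no contradiction since $v(IJ)$ is unbounded above; the descending chain condition on the bounded-below set $v(IJ)$ rules out nothing. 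What actually forces termination is the cutoff you introduced but did not use: one must set $S_{j+1}:=v(\pi(M_j))\cap(-\infty,N_0)$, absorbing everything of valuation at least $N_0$ into $V$, so that the chain lives in the finite set $v(IJ)\cap(-\infty,N_0)$; correspondingly, the quantity $|S_N|$ in your final display must be read as $\dim_k$ of the image of $M_N$ in $IJ/\pi(V)$ rather than of $\pi(M_N)$ itself, which may acquire extra dimensions of valuation at least $N_0$ through cancellation. With those repairs the transposition can be pushed through in equal characteristic, but as written the chain is not guaranteed to stabilize and the count $\sum_{z\in S_N}|\chi^{-1}(z)|-|S_N|$ is not justified.
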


\begin{proof} 
For any integer $c$ let 
\[
M_c:=\{\textstyle\sum_{i=1}^na_i\otimes_Rb_i \in I\otimes_R J|\ v(a_ib_i)\geq c\}\quad\textrm{ and }\quad
N_c:=\{x\in IJ|\ v(x)\geq c\}.
\]
Let 
$z=2F+2+\min(v(I))+\min(v(J))$. Let $M:=M_z$ and $N:=N_z$. 
Clearly $\pi(M)\subseteq N$.  
%Since $z>\sup\{d|\ \tau_d(v(I), v(J))\neq 0\}$, 
Lemma \ref{prop:ifCthenTF} 
implies that $M\cap \T(I\otimes J)=0$. Therefore, $\pi_{|_{M}}$ is injective. Let $x$ be an element of  $N$. Then $x=fg$ with $f$ in $\mathfrak{n}^{F+1+v(I)}\subseteq I$ and $g$ in $\mathfrak{n}^{F+1+v(J)}\subseteq J$. Since $f\otimes g$ is in $M$ and $\pi(f\otimes g)=x$, it follows that $\pi_{|_M}:M\to N$ is an isomorphism.
Let $\overline{\pi}:(I\otimes_RJ)/M\to (IJ)/N$ be the map induced from $\pi:I\otimes_RJ\to IJ$. The next inequality follows from Lemma \ref{prop:up}.
\begin{eqnarray*}
\len(M_c/M_{c+1})&\leq &|\{a\otimes b\in v(I)\otimes_{v(R)} v(J): \ a+b=c\}|\\
&=&
\left\{
\begin{array}{ll}
\tau_c(v(I),v(J))+1 & \textrm{if }c\in v(I)+v(J)\\
0 & \textrm{if }c\notin v(I)+v(J)
\end{array}
\right.
\end{eqnarray*}
Note that $\tau_c(v(I),v(J))=0$ if $c$ is not in $v(I)+v(J)$ or $c\geq z$. Therefore
\begin{eqnarray*}
\len((I\otimes_R J)/M)&=&\textstyle\sum_{c<z}\len(M_c/M_{c+1})\\
&\leq&\textstyle\sum_{{c<z\ \ \ \ \ \ \ \ }\atop{c\in v(I)+v(J)}}\tau_c(v(I),v(J))+1\\
&=&\tau(v(I),v(J))+|(v(I)+v(J))\diagdown v(N)|.
\end{eqnarray*}
This explains the fourth step in the next sequence.
\begin{eqnarray*}
\len(\T(I\otimes J))&\stackrel{(1)}=&\len(\ker(\pi))\\
&\stackrel{(2)}=&\len(\ker(\overline{\pi}))\\
&\stackrel{(3)}=&\len((I\otimes_R J)/M)-\len((IJ)/N)\\
&\stackrel{(4)}\leq& \tau(v(I),v(J))+|(v(I)+v(J))\diagdown v(N)|-\len((IJ)/N)\\
&\stackrel{(5)}=& \tau(v(I),v(J))+|(v(I)+v(J))\diagdown v(N)|-|v(IJ)\diagdown v(N)|\\
&\stackrel{(6)}=&\tau(v(I),v(J))-|v(IJ)\diagdown(v(I)+v(J))|.
\end{eqnarray*}
The first step comes from the discussion at the beginning of Section 1. The second step follows from the fact that $\pi$ maps $M$ isomorphically onto $N$. The third step follows from surjectivity of $\overline \pi$. The fifth step is given by the equality $\len((IJ)/N)=|v(IJ)\diagdown v(N)|$ from  Remark \ref{lem:quotient}. The last step is straightforward, and the result follows. 
\end{proof}

The next example shows that the inequality in Proposition \ref{prop:inequality} can be strict.

\begin{ex}
Let $R=k[t^4,t^5,t^6]_{(t^4,t^5,t^6)}$. Let $I=(t^4,t^5)$ and $J=(t^4,t^5+t^7)$ be fractional ideals of $R$.  
Notice that $v(I)=v(J)=\{4,5,8,\to\}$. When $z\neq 9,16$ we have $\tau_z(v(I),v(J))=0$. Also $v(I)\otimes_{v(R)}v(J)$ has exactly two elements in degree $9$ and in degree $16$. Thus $\tau(v(I),v(J))=2$. Specifically 
$5\otimes 4\neq 4\otimes 5$ in $v(I)\otimes_{v(R)}v(J)$ and 
$12\otimes 4=8\otimes 8=4\otimes 12\neq 11\otimes 5=5\otimes 11$ in $v(I)\otimes_{v(R)}v(J)$.
We have $v(IJ)=\{8,\to\}$ and 
$v(I)+v(J)=\{8,9,10,12,\to\}$.  Therefore 
\[
|v(IJ)\diagdown (v(I)+v(J))|=|\{11\}|=1.
\]
It follows from Proposition \ref{prop:inequality} that 
\[
\lambda(\T(I\otimes_R J))\leq\tau(v(I),v(J))-|v(IJ)\diagdown (v(I)+v(J))|=2-1=1.
\]
In this case it turns out that the inequality is strict and $\lambda(\T(I\otimes_R J))=0$. The difference can be accounted for by the relation $t^{4}\otimes t^{12}-t^{5}\otimes t^{11}=0$.  Specifically
$t^{4}\otimes t^{12}
=t^4\otimes(t^{10}+t^{12})-t^{4}\otimes t^{10}
=t^{9}\otimes(t^5+t^7)-t^{10}\otimes t^{4}\\
\indent\indent \ \ \ =t^5\otimes(t^9+t^{11})-t^5\otimes t^9
=t^{5}\otimes t^{11}$.
%
%Similarly let $R'=k[t^4,t^5,t^6]$. Let $I'=(t^4,t^5)$ and $J'=(t^4,t^5+t^7)$ be fractional ideals of $R'$. Here
%$\deg(I')=\{4,5,8,\to\}$, $\deg(J')=\{4,7,\to\}$, $\deg(I'J')=\{8,\to\}$ and $\deg(I')+\deg(J')=\{8,9,11,\to\}$.
%It follows from Proposition \ref{prop:MtoG} that
%\[
%\lambda(\T(I'\otimes_R J'))\leq\tau(\deg(I'),\deg(J'))-|\deg(I'J')\diagdown (\deg(I')+\deg(J'))|=2-1=1.
%\]
\end{ex}

\begin{prop}  \label{prop:rank}
Let $R$ be a one-dimensional analytically irreducible residually rational ring with maximal ideal $\m=(t^{n_1},\hdots ,t^{n_e})$ and let $R'$ be a $\mathbb{Z}^n$ standard graded $k$-subalgebra of $k[x_1,\hdots,x_n]$. Let $I$ and $J$ be monomial fractional ideals of $R$. Let $I'$ and $J'$ be finitely generated monomial fractional ideals of $R'$. Then
$\len(\T(I\otimes_RJ))=\tau(v(I),v(J))$ and $\len(\T(I'\otimes_{R'}J')=\tau(\deg(I'),\deg(J'))$.
\end{prop}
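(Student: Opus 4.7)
Both statements are inequalities $\leq \tau$ that come directly from Propositions \ref{prop:MtoG} and \ref{prop:inequality}: for monomial fractional ideals the correction terms vanish, because a product of monomial ideals is generated by monomials whose valuations (resp.\ degrees) form the Minkowski sum, giving $v(IJ) = v(I) + v(J)$ and $\deg(I'J') = \deg(I') + \deg(J')$. The task reduces to proving the reverse inequality $\geq \tau$ in each case.

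For the graded case, since $R'$ embeds as a $\mathbb{Z}^n$-graded subalgebra of $k[x_1,\ldots,x_n]$ and each graded piece of the polynomial ring is at most $1$-dimensional, $R' = k[S']$ is the semigroup algebra on $S' = \deg(R')$, and the monomial fractional ideals satisfy $I' = k[A']$ and $J' = k[B']$ with $A' = \deg(I')$ and $B' = \deg(J')$. The standard identity $k[A'] \otimes_{k[S']} k[B'] \cong k[A' \otimes_{S'} B']$ then yields $I' \otimes_{R'} J' \cong k[A' \otimes_{S'} B']$, under which $\pi$ becomes the natural projection $k[A' \otimes_{S'} B'] \twoheadrightarrow k[A' + B']$ sending $[a \otimes b] \mapsto x^{a+b}$. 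Its kernel has $k$-dimension $\tau$ and is annihilated by a power of the irrelevant maximal ideal, so it is a module over a local Artinian $k$-algebra with residue field $k$; hence $\len(\T) = \dim_k(\T) = \tau$.

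For the valuation case, first reduce to the complete case via $R \to \hat R$: completion is faithfully flat, preserves length of finite-length modules, and leaves $\tau$ unchanged since $v(I) = v(I \hat R)$ for monomial $I$. In the complete case $\hat R = k[[S]]$ with $S = v(R)$, and $\hat R$ is flat over the semigroup ring $k[S]$, so $I = \hat R \otimes_{k[S]} k[A]$ and $J = \hat R \otimes_{k[S]} k[B]$ where $A = v(I)$, $B = v(J)$. The usual base-change isomorphism gives
\[
I \otimes_{\hat R} J \;\cong\; \hat R \otimes_{k[S]} \bigl(k[A] \otimes_{k[S]} k[B]\bigr) \;\cong\; \hat R \otimes_{k[S]} k[A \otimes_S B].
\]
Flat base change preserves torsion (since $\operatorname{Frac}(k[S]) \otimes_{k[S]} \hat R = \operatorname{Frac}(\hat R)$), so $\T(I \otimes_{\hat R} J) \cong \hat R \otimes_{k[S]} \T(k[A \otimes_S B])$. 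Moreover $\mathfrak{m}_{\hat R} = \mathfrak{m}_{k[S]} \hat R$ and both rings have residue field $k$, so this base change preserves length as well. As $\T(k[A \otimes_S B])$ has $k$-dimension $\tau$ (and hence $k[S]$-length $\tau$) by the combinatorial count, we obtain $\len(\T(I \otimes_R J)) = \tau$. The main technical hurdle is checking the flat-base-change identities above: the identification $I = \hat R \otimes_{k[S]} k[A]$, torsion commuting with the base change, and length being preserved under $k[S] \to \hat R$ and under $R \to \hat R$.
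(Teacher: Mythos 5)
Your argument is correct in substance but takes a genuinely different route from the paper's. The paper obtains the upper bound exactly as you do from Propositions \ref{prop:MtoG} and \ref{prop:inequality} (in fact it need not verify that the correction terms vanish, since they are \emph{subtracted} there, so $\len(\T)\leq\tau$ holds regardless, and $v(IJ)=v(I)+v(J)$ then falls out once equality is proved); for the lower bound it works with the explicit presentation of $\T(I\otimes_RJ)$ from Theorem \ref{prop:moduleOneWay2} and exhibits, for each $z$, at least $\tau_z(v(I),v(J))$ elements of the form $t^z(\mathbf{e}_i-\mathbf{e}_{h_z})$ that remain nonzero and distinct in that quotient. You instead compute the tensor product outright, identifying rings and ideals with semigroup algebras and semigroup modules so that $I\otimes_RJ$ becomes $k[v(I)\otimes_{v(R)}v(J)]$ up to flat base change, and the torsion is read off degree by degree; this yields the equality in one stroke without needing the paper's counting argument (or even, strictly, the upper bound). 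What it costs you is two identifications that carry real content and are asserted rather than proved: (i) $\hat R=k[[S]]$, which requires a coefficient field (Cohen's structure theorem, so implicitly the equicharacteristic case), the fact that $\m\hat R$ is generated by the monomials $t^{n_i}$, and then equality rather than mere containment --- e.g.\ via $v(\hat R)=v(R)$ together with Remark \ref{lem:quotient}; and (ii) the identity $k[A]\otimes_{k[S]}k[B]\cong k[A\otimes_SB]$, which is true and not difficult (the evident maps in both directions are well defined and mutually inverse), but appears nowhere in the paper and is precisely the point where the combinatorial object $A\otimes_SB$ enters, so it must be written out. The base-change compatibilities you flag (torsion and length under $k[S]_{\m}\to k[[S]]$ and $R\to\hat R$) do all check out, and the graded case is clean since no completion is needed there. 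The paper's argument, by contrast, never leaves $R$ itself, which is why it avoids these structure-theoretic inputs.
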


\begin{proof}
Let $I=(t^{a_1},\hdots,t^{a_m})$. By Propositions  \ref{prop:MtoG} and \ref{prop:inequality} we have the inequalities
$\len(\T(I\otimes_{R}J))\leq\tau(\deg(I),\deg(J))$ and $\len(\T(I'\otimes_{R'}J'))\leq\tau(v(I'),v(J'))$.
By Theorem \ref{prop:moduleOneWay2} we have 
\[
M:=\frac{\sum_{i<j}(t^{a_i}J\cap t^{a_j}J)(\mathbf{e}_i-\mathbf{e}_j)}{\sum_{i<j}(t^{a_i}R\cap t^{a_j}R)J(\mathbf{e}_i-\mathbf{e}_j)}\cong\T(I\otimes_RJ).
\]
For each $z$ in $\deg(IJ)$ choose $h_z$ such that $z-a_{h_z}$ is an element of $\deg(J)$. Then $t^z$ is an element of  $t^{a_{h_z}}J$. If we take the quotient of $M$ by all of the distinct non-zero elements of the form $t^z(\mathbf{e_i}-\mathbf{e}_{h_z})$ starting with $z$ as large as possible and letting $z$ decrease, then at each stage the length of the quotient decreases. Thus 
$\len(M)$ is at least the number of elements of this type.
Consider the following equivalences. 
\[
\textstyle t^z(\mathbf{e}_r-\mathbf{e}_s)\in\sum_{i<j}(t^{a_i}J\cap t^{a_j}J)(\mathbf{e}_i-\mathbf{e}_j)
\iff z\in (a_r+v(J))\textrm{ and }z\in (a_s+v(J))\\
\]
\quad \ \ \ \ \ \ \ \ \ \ \ \ \ \ \ \ \ \ \ \ \ \ \  \ \ \ \ \ \ \ \ \ \ \ 
\ \ \ \ \ \ \   $\iff a_r\otimes(z-a_r),a_s\otimes(z-a_s)\in v(I)\otimes v(J)$
\\ \\
Suppose that $t^z(\mathbf{e}_r-\mathbf{e}_s)$ is an element of $\sum_{i<j}(t^{a_i}R\cap t^{a_j}R)J(\mathbf{e}_i-\mathbf{e}_j)$. Then there exists a sequence $r=i_1,i_2\hdots ,i_h=s$ such that $t^z$ is in $(t^{a_{i_j}}R\cap t^{a_{i_{j+1}}}R)J$ for $j=1,\hdots,h-1$; hence there exist $b_1,\hdots, b_{h-1}$ in $\deg(J)$ such that $z-b_j$ is an element of  $(a_{i_j}+\deg(R))\cap(a_{i_{j+1}}+\deg(R))$. Thus
\[
a_{i_j}\otimes(z-a_{i_j})=(z-b_j)\otimes b_j=a_{i_{j+1}}\otimes (z-a_{i_{j+1}})\textrm{ for }j=1,\hdots,h-1.
\]
Thus $a_r\otimes(z-a_r)$ and $a_s\otimes(z-a_s)$ represent the same element in $v(I)\otimes_{v(R)}v(J)$. It follows that for a given $z$ in $v(IJ)$ the number of distinct non-zero elements in $M$ of the form $t^z(\mathbf{e_i}-\mathbf{e}_{h_z})$ is at least 
\[
|\{a_i\otimes(z-a_i)\in v(I)\otimes v(J)\}|-1=\tau_z(v(I),v(J)).
\]
 Thus 
\[
\textstyle{\len(\T(I\otimes_RJ))}=\len(M)\geq\sum_{z\in\mathbb{Z}}\tau_z(v(I),v(J))=\tau(v(I),v(J)).
\]
A similar argument shows that ${\len(\T(I'\otimes_{R'}J'))}\geq\tau(\deg(I'),\deg(J'))$.
\end{proof}

\section{Hypersurfaces}

In this section we fix relatively prime integers $a$ and $b$ such that $b>a>1$.
Let $Z:=\mathbb{Z}^2/(b,-a)\mathbb{Z}$ be the quotient group of $\mathbb{Z}^2$. 
For any point $(x,y)$ in $\mathbb{Z}^2$ let $\overline{(x,y)}$ in $Z$ denote the coset containing the point $(x,y)$.
Let $\psi: Z\to\mathbb{Z}$ be the group isomorphism defined by $\psi(\overline{(x,y)}):=ax+by$.
Since $\psi$ is an isomorphism, it establishes an equivalence between sub-semigroups of $Z$ and sub-semigroups of $\mathbb{Z}$.  Let $S=\langle a,b\rangle$. Then $S_{Z}:=\psi^{-1}(S)$ is the sub-semigroup of $Z$ generated by 
$\psi^{-1}(0)=\overline{(0,0)}$, $\psi^{-1}(a)=\overline{(1,0)}$ and $\psi^{-1}(b)=\overline{(0,1)}$.
Given a relative ideal $A$ of $S$ we denote the relative ideal $\psi^{-1}(A)$ of $S_{Z}$ by $A_{Z}$. 

The set
\[
\B{A_Z}:=\{\overline{(x,y)}\in A_Z|\ \overline{(x-1,y-1)}\notin A_Z\}
\]
will be referred to as the {\it boundary} of $A_Z$.
The {\it Apery set} of $A$ for some $n$ in $S$ is the set $\Ap(A,n):=\{a\in A|\ a-n\notin A\}$.
Note that $\B{A_Z}=\psi^{-1}(\Ap(A,a+b))$.

\begin{ex} \label{ex:basic}
Let $S=\langle 5,7\rangle$ and
$A=(17,21,25)$. The generators of $A_Z$ are 
$\psi^{-1}(17)=\overline{(2,1)}$, $\psi^{-1}(21)=\overline{(0,3)}$ and $\psi^{-1}(25)=\overline{(5,0)}$.
We represent $Z$ on a section of the lattice in the plain which depicts $\mathbb{Z}^2$. The region depicted below extends infinitely between the parallel lines. Two points in $\mathbb{Z}^2$ are equivalent when they differ by an integer multiple of the vector $v=(7,-5)$. Every point in $\mathbb{Z}^2$ is uniquely equivalent to one of the points in the region below. We represent each element of $A_Z$ with a $\bullet$, 
each element of $S_Z\diagdown A_Z$ with a $\circ$ and each element of $Z\diagdown S_Z$ with a $"\cdot"$. 
\xymatrix@=1.0em@!0{
& & & & & & & & & & & & & & & & & & & & & & & & & \\
& & & & & & & & & & & & & & & \bullet & \bullet & & & & & & & & & \\
& & & & & & & & & & & & & & \bullet & \bullet & \bullet & \bullet & & & & & & & & \\
& & & & & & & & & & & & & \bullet & \bullet & \bullet & \bullet & \bullet & \bullet & \bullet & & & & & & \\
& & & & & & & & & & & & & \bullet& \bullet & \bullet & \bullet & \bullet & \bullet & \bullet & \bullet & & & & & \\
& & & & & & & & & & & & \bullet & \bullet & \bullet & \bullet & \bullet & \bullet & \bullet & \bullet & \bullet & \bullet &  & & & \\
& & & & & & & & & & & \bullet & \bullet & \bullet & \bullet & \bullet & \bullet & \bullet & \bullet & \bullet & \bullet & \bullet & & & & \\
& & & & & & & & & & \bullet \ar@{{-}->}[lllllddddddd] \ar@{{-}->}[rrrrruuuuuuu]& \bullet & \bullet & \bullet & \bullet & \bullet & \bullet & \bullet & \bullet & \bullet & \bullet & & & & & \\
& & & & & & & & & & \bullet & \bullet & \bullet & \bullet & \bullet & \bullet & \bullet & \bullet & \bullet & \bullet & & & & & & \\
& & & & & & & & & \cdot & \bullet & \bullet & \bullet & \bullet& \bullet & \bullet & \bullet & \bullet & \bullet & \bullet & & & & & & \\
& & & & & & & & \cdot & \cdot & \circ& \circ & \bullet & \bullet & \bullet & \bullet & \bullet & \bullet & \bullet & & & & & & & \\
& & & & & & & \ar@{{-}->}(48.7,-58)|{v} & \cdot & \cdot & \circ & \circ & \bullet & \bullet & \bullet & \bullet & \bullet & \bullet & & & & & & & & \\
& & & & & & & \cdot & \cdot & \cdot & \circ \ar[dddddd] \ar@{-}[rrrrrrr] & \circ \ar@{-}[lllll]& \circ & \circ & \circ & \bullet & \bullet & \bullet \ar@{-->}[lllllddddddd] \ar@{-->}[rrrrruuuuuuu] & & & & & & & & \\
& & & & & & \cdot & \cdot & \cdot & \cdot & \cdot \ar@{-}[uuuuuu] & \cdot & \cdot & \cdot & \cdot & \cdot & \cdot & & & & & & & & & \\
& & & & & & \cdot & \cdot & \cdot & \cdot & \cdot & \cdot & \cdot & \cdot & \cdot & \cdot & & & & & & & & & & \\
& & & & & & & \cdot & \cdot & \cdot & \cdot & \cdot & \cdot & \cdot & \cdot & & & & & & & & & & & \\
& & & & & & & & \cdot & \cdot & \cdot & \cdot& \cdot & \cdot & \cdot & & & & & & & & & & & \\
& & & & & & & & & & \cdot & \cdot & \cdot & \cdot & & & & & & & & & & & & \\
& & & & & & & & & & & \cdot & \cdot & & & & & & & & & & & & & \\
& & & & & & & & & & & & & & & & & & & & & & & & & 
}
\end{ex}

\begin{notation}
Given a relative ideal $A$ of $S$. The unique minimal generating sets for  $A_Z$ and $A$ are 
$\G{A_Z}=\{\overline{(x,y)}\in A_Z|\ \overline{(x-1,y)},\ \overline{(x,y-1)}\notin A_Z\}$ and
$\G{A}=\{z\in A|\ z-a,z-b\notin A\}$ respectively.
\end{notation}

\begin{defn} \label{def:varphi}
A path $\gamma:[0,1]\to\mathbb{R}/\mathbb{Z}$ is positively oriented if there is a strictly increasing continuous map $\gamma':[0,1]\to\mathbb{R}$ such that $\gamma$ is the composition of $\gamma'$ with the natural surjection $\mathbb{R}\to\mathbb{R}/\mathbb{Z}$. We define a negatively oriented path analogously.
%We give $\mathbb{S}^1$ an orientation. Let $\gamma:[0,1)\to\mathbb{S}^1$ be a continuous injective map. Then $\gamma$ factors as $\alpha\circ\gamma'$ such that 
%$\gamma':[0,1)\to\mathbb{R}$ is a continuous map and $\alpha: \mathbb{R}\to\mathbb{R}/\mathbb{Z}=:\mathbb{S}^1$ is the natural surjection. This factorization is unique up to shifting $\gamma'$ by an integer. As $\gamma'$ is a continuous injective map, it is either strictly increasing or strictly decreasing.  If $\gamma'$ is strictly increasing, then we say that $\gamma$ is positively oriented. Otherwise, we say that $\gamma$ is negatively oriented. 

Let $v=(b,-a)$ be a vector and let $\varphi:Z\to\mathbb{R}/\mathbb{Z}$ be the map given by
\[
\varphi(\overline{(x,y)}):=\frac{bx-ay}{a^2+b^2}+\mathbb{Z}=\frac{(x,y)\cdot v}{|v|^2}+\mathbb{Z}.
%=\frac{\cmp_v(<x,y>)}{|v|}+\mathbb{Z},
\]
%where $\cmp_v(-)$ gives the magnitude of the projection of a vector onto $v$.
In particular $\varphi$ sends the elements of $\B{A_Z}$ (resp. $\G{A_Z}$) to distinct elements of $\mathbb{R}/\mathbb{Z}$. Therefore, the elements of $\B{A_Z}$ are cyclically ordered by the order that their images occur when traversing 
$\mathbb{R}/\mathbb{Z}$ in the positively oriented direction.
%For every point $\overline{(x,y)}$ in $\B{A_Z}$ either $\overline{(x+1,y)}$ is in $\B{A_Z}$ or $\overline{(x,y-1)}$ is in $\B{A_Z}$ but not both.  Whichever of $\overline{(x+1,y)}$ or $\overline{(x,y-1)}$ is in $\B{A_Z}$ is the next point of $\B{A_Z}$ to occur after 
%$\overline{(x,y)}$ in the cyclic ordering on $\B{A_Z}$.

Let $p$ and $q$ be generators of $A_Z$. We say that $q$ follows $p$ and that $p$ precedes $q$ in $\G{A_Z}$ (resp. $\B{A_Z}$) if $\varphi(p)$ is the next element of $\varphi(\G{A_Z})$ to occur after $\varphi(q)$ when traversing $\mathbb{R}/\mathbb{Z}$ in the positively oriented direction.
% and that $p$ and $q$ are adjacent to each other in $\G{A_Z}$.
\end{defn}
\begin{defn} \label{def:interval}
The closed interval $[p,q]_A\subseteq \B{A_Z}$ is the set of all elements in $\B{A_Z}$ that successively follow one another in $\B{A_Z}$ starting with $p$ up to and including $q$. 
Similarly we define the open interval $(p,q)_A:=[p,q]_A\diagdown\{p,q\}$, and the half open intervals $(p,q]_A:=[p,q]_A\diagdown\{p\}$ and $[p,q)_A:=[p,q]_A\diagdown\{q\}$.
\end{defn}
\begin{defn}
Let 
$\MB{A_Z}=\{\overline{(x,y)}\in\B{A_Z}|\ \overline{(x-1,y)}, \overline{(x,y-1)}\in A_Z\}$.
\end{defn}

In the next example one should reference the diagram in Example \ref{ex:basic} for clarity.

\begin{ex}
Let  $S=\langle 5,7\rangle$ and
$A=(17,21,25)$ be the same as in Example \ref{ex:basic}.  Then the cyclically ordered elements of $\G{A_Z}$ are $\overline{(0,3)}$, $\overline{(2,1)}$ and $\overline{(5,0)}$. The set of maximal elements of the boundary is
$\MB{A_Z}=\{\overline{(0,5)}, \overline{(2,3)}, \overline{(5,1)}\}$.
The cyclically ordered elements of the boundary $\B{A_Z}$ are 
\[
\overline{(0,5)},\overline{(0,4)},\overline{(0,3)},\overline{(1,3)},\overline{(2,3)},\overline{(2,2)},\overline{(2,1)},\overline{(3,1)},\overline{(4,1)},\overline{(5,1)},\overline{(5,0)}\textrm{ and }\overline{(6,0)}.
\]
Notice that we do not mention $\overline{(7,0)}$, since $\overline{(7,0)}=\overline{(0,5)}$. Also we could have chosen to begin this list with any of the elements since the ordering is cyclic. Lastly the interval 
$[\overline{(5,1)},\overline{(1,3)}]_A=\{\overline{(5,1)},\overline{(5,0)},\overline{(6,0)},\overline{(0,5)},\overline{(0,4)},\overline{(0,3)},\overline{(1,3)}\}$.
\end{ex}

\begin{lem}\label{lem:ordering}
Let $A$ be a relative ideal of $S=\langle a,b\rangle$.  Let $\GZ{A_Z}=\{p_1,\hdots ,p_n\}$ such that 
$p_{i+1}$ follows $p_i$ in $\G{A_Z}$ for all $i$ in $\mathbb{Z}/n\mathbb{Z}$. Then we may choose ordered integers 
$x_1<x_2<\hdots <x_n<x_1+b$ and $y_1>\hdots >y_n>y_1-a$ such that $p_i=\overline{(x_i,y_i)}$.
% Hence
%\[
%A=(ax_1+by_1,\hdots , ax_n+by_n)\quad\textrm{ and }\quad A_Z=(\overline{(x_1,y_1)},\hdots ,\overline{(x_n,y_n)}).
%\]
\end{lem}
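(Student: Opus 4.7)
The plan is to realize the $p_i$ as ``corners'' of the lifted staircase $\tilde A := \{(x,y) \in \mathbb{Z}^2 : \overline{(x,y)} \in A_Z\}$, which is closed under translation by $(1,0)$ and $(0,1)$ (since $A_Z$ is a relative ideal of $S_Z$) and $(b,-a)$-periodic. Every lift of an element $\overline{(x,y)} \in \G{A_Z}$ is automatically a \emph{corner}, meaning $(x,y) \in \tilde A$ but $(x-1,y), (x,y-1) \notin \tilde A$, and the lifts of a given $p \in \G{A_Z}$ form a single $(b,-a)$-orbit.

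First I would pick any lift $(x_1,y_1)$ of $p_1$. Since $b \in S$, two distinct minimal generators of $A$ cannot differ by a multiple of $b$, so distinct elements of $\G{A_Z}$ correspond to distinct residues of the $x$-coordinate modulo $b$. For each $i \geq 2$ this gives a unique lift $(x_i,y_i)$ of $p_i$ with $x_i \in (x_1, x_1+b)$, so the $x_i$'s are $n$ pairwise-distinct integers in $[x_1, x_1+b)$.

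For the $y$-coordinates, each row of $\tilde A$ is rightward-closed, so no two corners share a row and the $y_i$'s are pairwise distinct. If $x_i < x_j$ but $y_i < y_j$, then $(x_i, y_j) \in \tilde A$ by the $(0,1)$-closure and $(x_j - 1, y_j) \in \tilde A$ by the $(1,0)$-closure, contradicting the corner property of $(x_j, y_j)$. Thus the $x$-increasing order on the $(x_i,y_i)$'s coincides with the $y$-decreasing order. Applying the same observation to $(x_n,y_n)$ and the periodic translate $(x_1+b, y_1-a)$, which is itself a corner, produces the endpoint bound $y_n > y_1 - a$.

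Finally, to match my ordering with the hypothesis I would lift $\varphi$ to $\tilde\varphi(x,y) := (bx - ay)/(a^2+b^2)$ and compute, for corners sorted by increasing $x$,
\[
\tilde\varphi(x_{i+1},y_{i+1}) - \tilde\varphi(x_i,y_i) = \frac{b(x_{i+1}-x_i) + a(y_i - y_{i+1})}{a^2+b^2} > 0,
\]
while $\tilde\varphi(x_1+b, y_1-a) = \tilde\varphi(x_1,y_1) + 1$. Hence the values $\tilde\varphi(x_i,y_i)$ are strictly increasing and contained in an interval of length $1$, so modulo $\mathbb{Z}$ they occur in the positively oriented cyclic order of $\varphi$. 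This agrees with the enumeration in which $p_{i+1}$ follows $p_i$, so up to the cyclic choice of $p_1$ the two labelings coincide. The main obstacle is the corner bookkeeping: checking that each $p_i$ contributes exactly one lift to the strip $(x_1, x_1+b)\times\mathbb{Z}$ and that the staircase geometry automatically forces both the $y$-strict inequalities and the endpoint bounds; once these are in place, matching the resulting order to $\varphi$ is a direct calculation.
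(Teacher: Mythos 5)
Your proposal is correct and follows essentially the same route as the paper: choose the unique lift of each generator in the horizontal strip $[x_1,x_1+b)$, use minimality (equivalently, the corner property) to get distinctness and the reversed monotonicity of the $y$-coordinates together with the endpoint bound via the translate $(x_1+b,y_1-a)$, and then verify that the $x$-increasing order matches the cyclic order by computing $\varphi$. The only cosmetic difference is that you phrase the monotonicity step via the staircase/corner geometry while the paper invokes $p_i\notin p_{i-1}+S_Z$ directly; these are the same argument.
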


\begin{proof}
Choose integers $x_1$ and $y_1$ such that $p_1=\overline{(x_1,y_1)}$. 
%We may assume that $A$ is not principal otherwise we are done.  
For each generator $p_i$ of $A_Z$ choose integers $x'_i$ and $y'_i$ such that $p_i=\overline{(x'_i,y'_i)}$.  Then  
$p_i=\overline{(x'_i+nb,y'_i-na)}$ for any integer $n$.  There exists a unique integer $n_i$ such that 
$x_1\leq x'_i+n_ib<x_1+b$.  Let $x_i=x'_i+n_ib$ and $y_i=y'_i-n_ia$.
For some pair $i,j$ if $x_i=x_j$, then it follows that either $p_i$ is in $p_j+S_Z$ or $p_j$ is in $p_i+S_Z$.  Since the $p_i$ are distinct generators, it follows that in this case $i=j$;  hence the $x_i$ are unique. Now permute the labels $x_2,x_3,\hdots,x_n$ and apply the same permutation to the labels $y_2,y_3,\hdots,y_n$ so that 
\[
x_1<x_2<\hdots <x_n<x_1+b.
\]

For $i=2,\hdots, n$ we have $x_i>x_{i-1}$ and $\overline{(x_i,y_i)}$ is not in $\overline{(x_{i-1},y_{i-1})}+A_Z$. Therefore 
$y_i<y_{i-1}$. Assume that $y_n\leq y_1-a$. Then there exists a positive integer $\ell$ such that 
$y_1-a<y_n+\ell a\leq y_1$.
It follows that $\overline{(x_1,y_1)}$ is in $\overline{(x_n-\ell b,y_n+\ell a)}+S_Z=\overline{(x_n,y_n)}+S_Z$. This contradicts the minimality of the generators; hence 
\[
y_1>y_2>\hdots >y_n>y_1-a.
\] 

 It follows from the ordering on the elements $x_i$ and the $y_i$ that 
\[
\frac{x_1b-y_1a}{a^2+b^2}<\cdots <\frac{x_nb-y_na}{a^2+b^2}<\frac{(x_1+a)b-(y_1-b)a}{a^2+b^2}=\frac{x_1b-y_1a}{a^2+b^2}+1.
\]
Therefore, $\overline{(x_1,y_1)}, \hdots ,\overline{(x_n,y_n)}$ occur one after another in the cyclic ordering on $\G{A_Z}$. Thus the permutation we applied to the labels $x_2,\hdots,x_n$ was the trivial permutation, and the result follows.
\end{proof}

\begin{thm} \label{thm:semigroup}
Let $A$ and $B$ be non-principal relative ideals of $S=\langle a,b\rangle$.  Then, 
\[
\textstyle\tau(A,B)+|\{z\in\mathbb{Z}|\ \tau_z(A,B)\neq 0\}|\geq\mu(A)\mu(B),
\]
and it follows that $\textstyle\tau(A,B)\geq \frac{1}{2}\mu(A)\mu(B)$.
\end{thm}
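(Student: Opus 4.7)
My plan is to exploit the cyclic ordering of the minimal generators of $A_Z$ and $B_Z$ provided by Lemma~\ref{lem:ordering}, together with the identification of $\chi^{-1}(z)$ with the connected components of $\Gamma_z(A,B)$ from Proposition~\ref{prop:graph}. Write the cyclically ordered generators as $p_1,\ldots,p_m \in \G(A_Z)$ with $p_i = \overline{(x_i,y_i)}$ satisfying $x_1 < \cdots < x_m < x_1+b$ and $y_1 > \cdots > y_m > y_1-a$, and analogously $q_1,\ldots,q_n \in \G(B_Z)$ with $q_j = \overline{(u_j,v_j)}$. Set $g_i = \psi(p_i)$ and $h_j = \psi(q_j)$.

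Each generator pair $(i,j)$ yields a tensor element $g_i \otimes h_j \in A\otimes_S B$, which, under the correspondence of Proposition~\ref{prop:graph}, is the connected component of $\Gamma_{z_{ij}}(A,B)$ containing the base edge $v_iw_j$, where $z_{ij} = g_i+h_j$. For each integer $z$, let $P_z = \{(i,j) : g_i+h_j = z\}$, so that $\sum_z |P_z| = \mu(A)\mu(B)$. The technical core is to prove that $|\chi^{-1}(z)| \geq |P_z|$ for every $z$, i.e., that the base edges arising from distinct pairs in $P_z$ land in distinct connected components of $\Gamma_z$. The geometric point is this: if $(i,j),(i',j') \in P_z$ are distinct with $i<i'$ in cyclic order, then the strict monotonicity of Lemma~\ref{lem:ordering} forces $u_j > u_{j'}$ and $v_j < v_{j'}$ (up to the identification by $(b,-a)$), so the pairs are transverse in the lattice. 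A connecting path in $\Gamma_z$ between $v_iw_j$ and $v_{i'}w_{j'}$ would have to traverse a non-base edge $v_kw_\ell$ satisfying $z-g_k-h_\ell \in S\setminus\{0\}$; by tracking the coordinate of $\overline{(x_k,y_k)}$ against the segment joining $p_i$ and $p_{i'}$ in the staircase shape of $A_Z$, such a $(k,\ell)$ cannot simultaneously satisfy $g_k \in A$, $h_\ell \in B$ and $z-g_k-h_\ell \in S\setminus\{0\}$ without contradicting the minimality or cyclic ordering of the generators.

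Granting this main claim, we obtain $|\chi^{-1}(z)| \geq |P_z|$ for all $z$, and in particular $|P_z| \geq 2$ forces $\tau_z(A,B) \neq 0$. Rewriting the left-hand side as $\tau(A,B) + |\{z : \tau_z \neq 0\}| = \sum_{z:\tau_z \neq 0} |\chi^{-1}(z)|$, the pairs with $|P_z| \geq 2$ already contribute at least $|P_z|$ units each. To absorb the pairs with $|P_z| = 1$, one invokes the complementary structure of $\MB(A_Z)$ and $\MB(B_Z)$, whose cardinalities equal $\mu(A)$ and $\mu(B)$ respectively and which alternate with the generators along the boundary; these produce additional torsion classes at the $z$-values lying between adjacent base pairs, with the crucial property that each isolated generator pair is matched, via the boundary alternation, to an element of a fiber $\chi^{-1}(z')$ with $|\chi^{-1}(z')| \geq 2$. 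Summing the contributions yields the desired inequality $\tau(A,B) + |\{z : \tau_z \neq 0\}| \geq \mu(A)\mu(B)$, and the consequence $\tau(A,B) \geq \tfrac{1}{2}\mu(A)\mu(B)$ follows immediately since $|\{z : \tau_z \neq 0\}| \leq \tau(A,B)$.

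The main obstacle is the geometric component-distinctness claim for the base edges: executing it rigorously requires a careful case-analysis in the $Z$-plane exploiting the convex-staircase structure of $A_Z$ and $B_Z$, and a clean formalization of what it means for the non-base edge $v_kw_\ell$ to ``cross'' the transverse pair $(p_i,q_j), (p_{i'},q_{j'})$. Once this is settled the rest is bookkeeping with the cyclic structure and the $\G/\MB$ alternation.
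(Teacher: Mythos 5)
Your overall architecture is the same as the paper's: assign to each generator pair $(i,j)$ an element of $A\otimes_S B$, show the assignment is injective and that every assigned element lies in a fiber $\chi^{-1}(z)$ with $z\in H:=\{z:\tau_z(A,B)\neq 0\}$, and then sum $\tau(A,B)+|H|=\sum_{z\in H}|\chi^{-1}(z)|\geq\mu(A)\mu(B)$. Your first claim --- that distinct pairs of minimal generators give distinct elements of $A\otimes_S B$, so $|\chi^{-1}(z)|\geq|P_z|$ --- is fine and in fact easier than you make it: minimality of $a_i$ and $b_j$ alone shows the equivalence class of $(a_i,b_j)$ is a singleton, no transversality argument needed.

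The genuine gap is your treatment of the pairs with $|P_{z_{ij}}|=1$. For such a pair the fiber $\chi^{-1}(a_i+b_j)$ may well be a singleton (e.g.\ when $a_i+b_j$ is a minimal generator of $A+B$ with a unique factorization), in which case $\tau_{z_{ij}}=0$, $z_{ij}\notin H$, and the element $a_i\otimes b_j$ contributes nothing to $\sum_{z\in H}|\chi^{-1}(z)|$. You must therefore reassign each such pair to an element $a_i\otimes(b_j+s)$ sitting in some \emph{other} degree $z'$ whose fiber provably has size at least $2$, and --- this is the crux --- you must do so injectively across \emph{all} pairs simultaneously, so that these reassigned elements do not collide with each other or with the elements $a_u\otimes b_v$ already charged to the fibers they land in. Your proposal asserts that the alternation of $\G{\cdot}$ and $\MB{\cdot}$ along the boundary "matches" each isolated pair to a large fiber, but gives no construction of the target element, no proof that its fiber has two elements, and no argument for global injectivity or non-collision. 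This is not bookkeeping: it is the entire content of the paper's proof, which defines $\delta(i,j)$ through six geometric cases on the cyclic boundary of $A_Z+B_Z$ (splitting on whether the preceding/following generator of $A_Z+B_Z$ shares the $A$-index or the $B$-index with $p_i+q_j$) and then verifies injectivity by showing the associated rectangles in the $(b,-a)$-strip are pairwise disjoint. Your closing paragraph concedes that this component is unexecuted; as written, the proposal proves the easy half of the inequality and leaves the hard half as a claim.
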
 

We will postpone the proof of Theorem \ref{thm:semigroup} until the end of the paper.

\begin{cor} \label{thm:main}
Suppose that $R=k[x^a,x^b]$ and that $R'$ is a one-dimensional analytically irreducible residually rational ring with maximal ideal $\m=(t^a,t^b)$ for some $t$ in $\overline{R}$. Let $I$ and $J$ be monomial ideals of $R$. Let $I'$ and $J'$ be monomial ideals of $R'$. Then $\len(\T(I\otimes_{R} J))\geq \frac{1}{2}\mu(I)\mu(J)$ and $\len(\T(I'\otimes_{R'}J'))\geq\frac{1}{2}\mu(I')\mu(J')$.
\end{cor}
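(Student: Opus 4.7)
The plan is to derive the corollary as an immediate consequence of Proposition \ref{prop:rank} combined with Theorem \ref{thm:semigroup}. First I would observe that $R=k[x^a,x^b]$ is a $\mathbb{Z}$-graded $k$-subalgebra of $k[x]$ with $\deg(R)=S:=\langle a,b\rangle$, and that Remark \ref{prop:snug}\eqref{max} gives $v(R')=\langle a,b\rangle=S$ for the ring $R'$. For a monomial ideal $I$ of $R$ (resp.\ $I'$ of $R'$), the minimal monomial generating set corresponds bijectively to the minimal generators of the relative ideal $\deg(I)$ (resp.\ $v(I')$) of $S$, so $\mu(I)=\mu(\deg(I))$ and $\mu(I')=\mu(v(I'))$.

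Next, by Proposition \ref{prop:rank},
$$\len(\T(I\otimes_R J))=\tau(\deg(I),\deg(J)) \quad\text{and}\quad \len(\T(I'\otimes_{R'}J'))=\tau(v(I'),v(J')).$$
Applying Theorem \ref{thm:semigroup} with the relative ideals $A=\deg(I)$, $B=\deg(J)$ of $S$ in the first case, and $A=v(I')$, $B=v(J')$ in the second, gives
$$\tau(A,B)\geq\tfrac{1}{2}\mu(A)\mu(B),$$
and substituting back yields the stated bounds. This handles the situation in which $I,J$ (resp.\ $I',J'$) are non-principal, as required by the hypothesis of Theorem \ref{thm:semigroup}; in the principal case the claim is vacuous or must be interpreted via the non-principal hypothesis from the introduction.

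The proof is essentially bookkeeping: the combinatorial heart is carried by Theorem \ref{thm:semigroup}, and the ring-theoretic bridge to it is Proposition \ref{prop:rank}. The only point that merits care is the identification $\mu(I)=\mu(\deg(I))$ (and likewise for $v$), which holds because a monomial is redundant in a monomial generating set exactly when its image in $\mathbb{Z}^n$ (resp.\ $\mathbb{Z}$) is redundant as a generator of the corresponding relative ideal of $S$. Everything else follows by direct substitution, so there is no genuine obstacle here once the two prior results are in hand.
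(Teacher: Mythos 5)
Your proposal is correct and is essentially identical to the paper's proof, which simply cites Proposition \ref{prop:rank} and Theorem \ref{thm:semigroup}; your extra remarks on the identification $\mu(I)=\mu(\deg(I))$ and on the implicit non-principality hypothesis (needed since the bound fails for principal ideals, where the tensor product is torsion-free) are sound and, if anything, more careful than the paper's one-line argument.
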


\begin{proof}
Apply Proposition \ref{prop:rank} and Theorem \ref{thm:semigroup}.
\end{proof}

\section{The inverse of an ideal}

\begin{defn} Let $S$ be a numerical semigroup with relative ideal $A$.
The inverse of $A$ is the relative ideal
$A^*:=\{z\in\mathbb{Z}|\ z+A\subseteq S\}$.
\end{defn} 

\begin{remark}
Let $R$ be a one-dimensional analytically irreducible residually rational ring and let $R'$ be a $k$ subalgebra of $k[x]$. Let $I$ and $J$ be fractional deals of $R$ and $R'$ respectively.
If $f$ is in $I^{-1}$, then $fI\subseteq R$ implies $v(f)+v(I)=v(fI)\subseteq v(R)$.  Therefore $v(f)$ is in $v(I)^*$ and $v(I^{-1})\subseteq v(I)^*$. Similarly $\deg(J^{-1})\subseteq\deg(J)^*$.  

If $(t^{a_1},\hdots,t^{a_n})$ is the maximal ideal of $R$ for some $t$ in $\overline{R}$ and $R'$ is a standard graded $k$-subalgebra of $k[x]$, then $v(I^{-1})=v(I)^*$ and $\deg(J^{-1})=\deg(J)^*$.

However, equality does not necessarily hold in the general setting. For instance
if $R=k[[t^5,t^7,t^9]]$ and $I=(t^5,t^7+t^8)$, then $5$ is in $v(I)^*$ and 
$5$ is not in $v(I^{-1})$.
\end{remark}

\begin{remark} \label{prop:monomialDual}
Let $R$ be an analytically irreducible residually rational ring with maximal ideal $\m=(t^{n_1},\hdots ,t^{n_e})$.
If $I$ is a monomial fractional ideal, then so is $I^{-1}$. If $J$ is  a reflexive fractional ideal, then $J$ is monomial if and only if $J^{-1}$ is monomial.

By Remark \ref{prop:snug}~\eqref{snug}  we have that $z$ is in $v(R)$ if and only if $t^z$ is in $R$. Therefore $fI\subseteq R$ implies $t^{v(f)}I\subseteq R$; hence if $z$ is in $v(I^{-1})$, then $t^z$ is in $I^{-1}$.  Let $I'=(t^z|\ z\in v(I^{-1}))$.  By Remark
 \ref{lem:quotient} we have $\len(I^{-1}/I')=|v(I^{-1})\setminus v(I')|=0$.  Thus $I^{-1}=I'$ is monomial. When $J$ is reflexive we get the other implication, since $(J^{-1})^{-1}=J$. 
\end{remark}

Let $s$ be an integer and let $\Gamma$ be a numerical semigroup. An \emph{arithmetic-sequence} over $\Gamma$ is a sequence of the form $(x,x+s,x+2s,\hdots,x+ns)$ such that 
$x+is$ is in $\Gamma$ for $i=0,\hdots,n$ with $n>0$.  In this case we say that the arithmetic sequence has $n$ steps. Arithmetic-sequences over $\Gamma$ with step size $s$ form a semigroup. Given arithmetic-sequences $(y,y+s,\hdots, y+as)$ and 
$(z,z+s,\hdots ,z+bs)$ over $\Gamma$, their sum $(y,\hdots,y+as)+(z,\hdots, z+bs):=(y+z,y+z+s,\hdots,y+(a+b)s)$ is also an arithmetic-sequence over $\Gamma$. We will say that an arithmetic-sequence is irreducible when it does not factor as the sum of two arithmetic-sequences. The following result is stated for semigroup rings  but an almost identical proof yields a similar result for two-generated monomial ideals over analytically irreducible residually rational rings and also for two-generated monomial ideals over a standard graded $k$-subalgebra of $k[x_1,\hdots,x_n]$.

\begin{prop} \label{prop:arithmetic}
Let $\Gamma$ be a numerical semigroup.  Let $I\cong(1,x^s)$ be a fractional ideal of $k[\Gamma]$, for some field $k$ and $s$ in $\mathbb{N}$.  Then the length of $\T(I\otimes_{k[\Gamma]} I^*))$ is equal to the number of irreducible 
arithmetic-sequences in $\Gamma$ of the form $(x,x+s,x+2s)$.     
\end{prop}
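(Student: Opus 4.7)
The plan is to apply Lemma~\ref{lem:iso2gen} to reduce computing $\len(\T(I\otimes_R I^*))$ over $R=k[\Gamma]$ to understanding the quotient $(I^2)^{-1}/(I^{-1})^2$, and then to identify both ideals as monomial fractional ideals whose monomial supports encode precisely the arithmetic-sequence data in the statement.

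After replacing $I$ by $(1,x^s)$, so that $I^2=(1,x^s,x^{2s})$, the first step is to describe $I^{-1}$ and $(I^2)^{-1}$ explicitly. Since $R$ is a $\mathbb{Z}$-graded $k$-subalgebra of $k[x]$, the graded analog of Remark~\ref{prop:monomialDual} shows that both are monomial fractional ideals, with
\begin{align*}
\deg(I^{-1})&=\{c\in\mathbb{Z}:c,c+s\in\Gamma\},\\
\deg((I^2)^{-1})&=\{c\in\mathbb{Z}:c,c+s,c+2s\in\Gamma\}.
\end{align*}
Because $I^{-1}$ is the $k$-span of its monomials, the product $(I^{-1})^2$ is itself a monomial fractional ideal, and
\[
\deg((I^{-1})^2)=\deg(I^{-1})+\deg(I^{-1})=\{c_1+c_2:c_1,c_1+s,c_2,c_2+s\in\Gamma\}.
\]

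The second step is to invoke Remark~\ref{rem:quotient} to compute
\[
\len\bigl((I^2)^{-1}/(I^{-1})^2\bigr)=\bigl|\deg((I^2)^{-1})\setminus\deg((I^{-1})^2)\bigr|.
\]
Finally I would unpack the two descriptions: $c\in\deg((I^2)^{-1})$ says exactly that $(c,c+s,c+2s)$ is an arithmetic-sequence in $\Gamma$, while $c\in\deg((I^{-1})^2)$ says exactly that this arithmetic-sequence factors as a sum of two one-step arithmetic-sequences $(c_1,c_1+s)+(c_2,c_2+s)$ in $\Gamma$. Hence the set difference enumerates precisely the irreducible arithmetic-sequences of the form $(c,c+s,c+2s)$ in $\Gamma$, as claimed.

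The step requiring the most care will be the equality $\deg((I^{-1})^2)=\deg(I^{-1})+\deg(I^{-1})$: a priori a monomial in $(I^{-1})^2$ might arise only through cancellations inside a $k$-linear combination of products rather than from a single monomial product. Since $I^{-1}$ is already a $k$-span of monomials, however, expanding any finite sum $\sum_k f_k g_k$ into its monomial support shows that every monomial appearing in $(I^{-1})^2$ does come from a product of one monomial in $I^{-1}$ with another, so the claimed equality of supports is genuine and the proof goes through.
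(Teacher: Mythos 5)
Your proposal is correct and follows essentially the same route as the paper: reduce via Lemma~\ref{lem:iso2gen} to $(I^2)^{-1}/(I^{-1})^2$, compute its length by Remark~\ref{rem:quotient} as $|\deg((I^2)^{-1})\setminus\deg((I^{-1})^2)|$, and identify these degree sets with the arithmetic-sequence data. Your extra care about $\deg((I^{-1})^2)=\deg(I^{-1})+\deg(I^{-1})$ is a point the paper passes over silently, but the argument is the same.
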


\begin{proof}
By Lemma \ref{lem:iso2gen}, $\T(I\otimes_{k[\Gamma]}I^*)\cong \frac{(I^2)^{-1}}{(I^{-1})^2}$. Therefore by Remark \ref{rem:quotient} we have 
$\len(\T(I\otimes_{k[\Gamma]}I^*))=|\deg((I^2)^-1)\setminus\deg(I^{-1})^2)|$. Note that $\deg(I^{-1})=(0,s)^*$ 
is the set of $z$ such that $(z,z+s)$ is an arithmetic-sequence in $\Gamma$. Also $\deg((I^2)^{-1})=(0,s,2s)^*$ is the 
set of $z$ such that $(z,z+s,z+2s)$ is an arithmetic sequence in $\Gamma$.   We have that 
$\deg((I^{-1})^2)=(0,s)^*+(0,s)^*$ is the set of sums $y+z$ such that $(y,y+s)$ and $(z,z+s)$ are arithmetic-
sequence in $\Gamma$. Thus $\deg((I^2)^{-1})\setminus\deg((I^{-1})^2)$ is the set of $x$ such that $(x,x+s,x+2s)$ is an irreducible sequence in $\Gamma$, and the result follows.
\end{proof}

\begin{lem} \label{prop:dim}
Let $R$ be a one-dimensional Gorenstein domain. Let $I$ and $J$ be fractional ideals of $R$ with $I\subseteq J$. Then 
$\len(I^{-1}/ J^{-1})=\len(J/ I)$.
\end{lem}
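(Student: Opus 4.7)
The strategy is to realize $I^{-1}/J^{-1}$ as $\Ext{1}{J/I}{R}$ and then use that this $\Ext$ is a length-preserving duality on finite length modules. Applying $\Hom{-}{R}$ to the short exact sequence $0\to I\to J\to J/I\to 0$ and using the identification $M^{-1}\cong \Hom{M}{R}$ established earlier in the paper, the first few terms of the resulting long exact sequence read
\[
0\to J^{-1}\to I^{-1}\to \Ext{1}{J/I}{R}\to \Ext{1}{J}{R}\to\cdots.
\]

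Next I would verify that $\Ext{1}{J}{R}=0$. Since $R$ is a one-dimensional domain and $J$ is a nonzero fractional ideal, $J$ is torsion-free, so its localization at every maximal ideal in its support has depth one; i.e., $J$ is maximal Cohen-Macaulay. Because $R$ is Gorenstein of dimension one, all positive $\Ext$ groups of an MCM module against $R$ vanish. Combined with the long exact sequence this yields
\[
I^{-1}/J^{-1}\cong \Ext{1}{J/I}{R}.
\]

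Finally, since $I$ and $J$ have equal rank, $N:=J/I$ has finite length. Localizing at each of the finitely many maximal ideals $\m\in\supp(N)$, and using that both length and $\Ext$ commute with localization, the problem reduces to showing that for a one-dimensional Gorenstein local ring $(R_\m,\m R_\m)$ and a finite length module $N_\m$, one has $\len[R_\m](\Ext[R_\m]{1}{N_\m}{R_\m})=\len[R_\m](N_\m)$. This is the content of local duality: the minimal injective resolution of $R_\m$ places $\E(R_\m/\m R_\m)$ in degree one, and for an $\m$-torsion module, $\Hom[R_\m]{N_\m}{\E(R_\m/\m R_\m)}$ is the Matlis dual, which has the same length as $N_\m$. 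The main obstacle, if one wants a self-contained proof, is this last identity; a clean alternative is to induct on $\len(J/I)$ by filtering along a composition series, reducing to the case $J/I\cong R/\m$, where $\len[R_\m](\Ext[R_\m]{1}{R_\m/\m R_\m}{R_\m})=1$ is precisely the statement that the Gorenstein type of $R_\m$ is one.
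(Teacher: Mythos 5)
Your proposal is correct and follows essentially the same route as the paper: identify $I^{-1}/J^{-1}$ with $\Ext{1}{J/I}{R}$ via the long exact sequence of $\Hom{-}{R}$, then compute its length by Matlis duality using the minimal injective resolution $0\to R\to K\to \bigoplus_\m \E_R(R/\m)\to 0$ of the one-dimensional Gorenstein ring. You are in fact slightly more careful than the paper in explicitly justifying $\Ext{1}{J}{R}=0$ via the MCM property of $J$, a step the paper leaves implicit.
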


\begin{proof}
%We denote the Matlis duality functor, $\Hom{-}{\E}$, by $(-)^{\vee}$. 
Applying $\Hom{-}{R}$ to the sequence 
$0\to I\to J\to J/ I\to 0$ 
we get the exact sequence
$(J/ I)^*\to J^*\to I^*\to \Ext{1}{J/ I}{R}\to 0$.
Since $J/ I$ is torsion and $R$ is torsion-free, $(J/ I)^*=0$; hence 
$I^{-1}/J^{-1}\cong I^*/ J^*\cong\Ext{1}{J/ I}{R}$. 
Let $E=\bigoplus_{\m\in\mspec(R)}\E_R(R/\m)$ where $\E_R(R/\m)$ is the injective hull of $R/\m$. 
Then $R\to K\to E\to 0$ is a minimal injective resolution of $R$. Applying 
$\Hom{R/I}{-}$ we get the exact sequence 
\[
\Hom{J/I}{K}\to\Hom{J/I}{E}\to\Ext{1}{J/I}{R}\to\Ext{1}{J/I}{K}.
\]
Since $K$ is injective, $\Ext{1}{J/I}{K}=0$. Since $J/I$ is torsion, $\Hom{J/I}{K}=0$. Thus $\Hom{J/I}{E}\cong \Ext{1}{J/I}{R}\cong I^{-1}/ J^{-1}$. Since Matlis duality preserves length, the result follows.
\end{proof}

\begin{defn} 
A numerical semigroup $S$ is {\it symmetric} when
$S=\{z|\ F-z\notin S\}$.
\end{defn}

In \cite{Kunz} E. Kunz showed that a one-dimensional analytically irreducible residually rational ring $R$ is Gorenstein if and only if the semigroup $v(R)$ is symmetric. The following result enriches this theory by extending the result to ideals.

\begin{prop} \label{prop:reflect}
Let $R$ be a one-dimensional analytically irreducible residually rational Gorenstein ring. Let $I$ be a fractional ideal of $R$. Then 
\[
v(I)^*=v(I^{-1})=\{z|\ F-z\notin v(I)\}.
\]
\end{prop}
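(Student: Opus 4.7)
The plan is to split the double equality into two separate claims: (a) $v(I)^* = \{z \mid F - z \notin v(I)\}$, and (b) $v(I)^* = v(I^{-1})$. Claim (a) will follow purely from Kunz's theorem that $v(R)$ is symmetric (cited in the paragraph preceding the proposition). Claim (b) will combine (a) with the Gorenstein length duality of Lemma \ref{prop:dim} to upgrade the one-sided inclusion $v(I^{-1}) \subseteq v(I)^*$ (noted in the remark preceding the proposition) to equality.

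For (a), the forward inclusion is immediate: if $z + v(I) \subseteq v(R)$ and we had $F - z \in v(I)$, then $F = z + (F-z) \in v(R)$, contradicting that $F$ is the Frobenius number. For the reverse, assume $F - z \notin v(I)$ and take any $y \in v(I)$; by symmetry of $v(R)$, $z + y \in v(R)$ iff $F - z - y \notin v(R)$, and were $F - z - y \in v(R)$, then since $v(I)$ is a relative ideal of $v(R)$ we would get $y + (F - z - y) = F - z \in v(I)$, a contradiction.

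For (b), pick $N \gg 0$ so that $I \subseteq J := t^{-N} R$; this is possible because $\n^{F+1} \subseteq R$ by Remark \ref{prop:snug}\eqref{snug}, so $t^N a \in \n^{F+1} \subseteq R$ for every generator $a$ of $I$ once $N$ is sufficiently large. Then $J^{-1} = t^N R$, with value set $v(J^{-1}) = N + v(R) \subseteq v(I^{-1}) \subseteq v(I)^*$. By Remark \ref{lem:quotient} and Lemma \ref{prop:dim},
\[
|v(I^{-1}) \setminus v(J^{-1})| = \len(I^{-1}/J^{-1}) = \len(J/I) = |v(J) \setminus v(I)|.
\]
The key step is to check that $z \mapsto F - z$ is a bijection from $v(J) \setminus v(I)$ onto $v(I)^* \setminus v(J^{-1})$: symmetry of $v(R)$ converts ``$z \in v(J) = -N + v(R)$'' into ``$F - z \notin N + v(R) = v(J^{-1})$'', while part (a) converts ``$z \notin v(I)$'' into ``$F - z \in v(I)^*$''. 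This forces $|v(I)^* \setminus v(J^{-1})| = |v(I^{-1}) \setminus v(J^{-1})|$, and the sandwich $v(J^{-1}) \subseteq v(I^{-1}) \subseteq v(I)^*$ with equal finite complements yields $v(I^{-1}) = v(I)^*$.

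The main obstacle I anticipate is recognizing that the Frobenius reflection $z \mapsto F - z$ is precisely the bijection pairing the two finite ``gap'' sets; once one realizes that part (a) and the symmetry of $v(R)$ are both reflection identities, the argument essentially writes itself. A secondary technical point is choosing a monomial reference ideal $J = t^{-N} R$ so that Lemma \ref{prop:dim} translates into a combinatorial statement about value sets, but this is handled cleanly by the conductor bound $\n^{F+1} \subseteq R$.
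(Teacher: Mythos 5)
Your proof is correct, and it reorganizes the argument in a genuinely different way from the paper. The paper first establishes $v(I^{-1})=\{z\mid F-z\notin v(I)\}$ by normalizing to an integral ideal $J=xI\subseteq R$ and squeezing $\len(R/J)=\len(J^{-1}/R)$ (Lemma \ref{prop:dim}) against the one-sided inclusion $v(J^{-1})\subseteq\{z\mid F-z\notin v(J)\}$ using Kunz symmetry; it then obtains the $v(I)^*$ equality by a separate specialization step, invoking the existence of a monomial ideal $I'$ over some ring $R'$ with $v(R')=v(R)$ and $v(I')=v(I)$, for which $v(I'^{-1})=v(I')^*$ is known. You instead prove $v(I)^*=\{z\mid F-z\notin v(I)\}$ first, by a purely combinatorial two-line argument from the symmetry of $v(R)$ and the fact that $v(I)$ is a relative ideal, and then deduce $v(I^{-1})=v(I)^*$ by the same length-versus-counting squeeze, taken relative to the reference ideal $t^{-N}R$ rather than $R$. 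The key inputs are identical (Kunz symmetry, Lemma \ref{prop:dim}, Remark \ref{lem:quotient}, and the inclusion $v(I^{-1})\subseteq v(I)^*$), but your route has the advantage of isolating the semigroup-theoretic content of the $v(I)^*$ identity and of avoiding the paper's unproved specialization claim about the existence of $I'$ and $R'$; the paper's route has the advantage of working directly inside $R$ without introducing the auxiliary ideal $t^{-N}R$ (whose legitimacy you correctly justify via the conductor bound of Remark \ref{prop:snug}\eqref{snug}).
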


\begin{proof} 
Let $x$ be an element of $I^{-1}\diagdown\{0\}$. Then  $J:=xI\subset R$.  
Let $z$ be in $v(J^{-1})$. Then there exists $f$ in $J^{-1}$ such that $v(f)=z$.  Let $g$ be an element of $K$ such that $v(g)=F-z$. Since $v(fg)=F$, it follows that $fg$ is not in $R$. Since $g$ is not in $J$ whenever $v(g)=F-z$ we have that $F-z$ is not in $v(J)$. Thus $v(J^{-1})\subseteq\{z|\ F-z\notin v(J)\}$.

This implies the inclusion $\{F-z\in v(R)|\ z\in v(J^{-1})\}\subseteq v(R)\diagdown v(J)$, which explains the fifth step below.
\begin{eqnarray*}
\len(R/ J)&\stackrel{(1)}=&\len(J^{-1}/ R)\\ 
&\stackrel{(2)}=&|v(J^{-1})\diagdown v(R)|\\
&\stackrel{(3)}=&|\{F-z\in\mathbb{Z}|\ z\in v(J^{-1})\textrm{ and }z\notin v(R)\}|\\
&\stackrel{(4)}=&|\{F-z\in v(R)|\ z\in v(J^{-1})\}| \\
&\stackrel{(5)}\leq &|v(R)\diagdown v(J)|\\
&\stackrel{(6)}=&\len(R/ J)
\end{eqnarray*}
The first step is given by Lemma \ref{prop:dim}. The second and last equalities are from Remark \ref{lem:quotient}. The third step is elementary, and the fourth equality is from \cite[Theorem]{Kunz}.

It follows that $v(R)\setminus v(J)=\{F-z\in v(R)|\ z\in v(J^{-1})\}$. If $F-z$ is not an element of $v(R)$, then $F-z$ is not in $v(J)$ and $z$ is in $v(R)\subseteq v(J^{-1})$. Thus 
$\mathbb{Z}\setminus v(J)=\{F-z\in\mathbb{Z}|\ z\in v(J^{-1})\}$, and it follows that $v(J^{-1})=\{z|\ F-z\notin v(J)\}$. As $v(I)=v(J)-v(x)$ and $v(I^{-1})=v(J^{-1})+v(x)$, we have 
\[
v(I^{-1})=\{z|\ F-z\notin v(I)\}.
\]

There exists a monomial ideal $I'$ over a one-dimensional analytically irreducible residually rational ring $R'\subseteq \overline R$ such that $v(R)=v(R')$ and $v(I)=v(I')$.  Thus $v(I)^*=v(I')^*=v(I'^{-1})=\{z|\ F-z\notin v(I')\}=\{z|\ F-z\notin v(I)\}=v(I^{-1})$. 
\end{proof}

%\begin{defn}
%Let $p_1,p_2\in Z$.  We may choose integers $x_1,\ y_1,\ x_2$ and $y_2$ such that $p_1=\overline{(x_1,y_1)}$, 
%$p_2=\overline{(x_2,y_2)}$ and $x_1\leq x_2< x_1+b$.  Then
%\[
%\Pk{p_1}{p_2}:=
%\left\{
%\begin{array}{cl}
%p_1 & \textrm{if }p_1\in p_2+S_Z\\
%p_2 & \textrm{if }p_2\in p_1+S_Z\\
%\textrm{$\overline{(x_2,y_1)}$} & \textrm{otherwise.}
%\end{array}
%\right.
%\]
%\end{defn}
%
%Note that $\Pk{p_1}{p_2}\neq \Pk{p_2}{p_1}$ unless $p_1\in p_2+S_Z$ or $p_2\in p_1+S_Z$.

\begin{thm} \label{thm:inverse}
Let $S=\langle a,b\rangle$ be a hypersurface numerical semigroup and let $A$ be a relative ideal of $S$.  Let $\GZ{A_Z}=\{p_1,\hdots ,p_n\}$ such that 
$p_{i+1}$ follows $p_i$ in $\G{A_Z}$ for all $i$ in $\mathbb{Z}/n\mathbb{Z}$. Choose integers 
$x_1<x_2<\hdots <x_n<x_1+b$ and $y_1>y_2>\hdots >y_n>y_1-a$ such that 
$p_i=\overline{(x_i,y_i)}$. Then $A^*=(-ax_1-by_n, ab-ax_{i+1}-by_{i}|\ i=1,\hdots,n-1)$.
%\begin{align*}
%&\MB{A_Z}=\{\overline{(x_{2},y_1)},\overline{(x_3,y_2)},\hdots ,\overline{(x_n,y_{n-1})},\overline{(x_1+b,y_n)}\},\\
%&\G{A^{-1}_Z}=\{\ \overline{(b,0)}-q\ |\ q\in\MB{A_Z}\}
%\end{align*}
%Note that the bijection between $\MB{A_Z}$ and $\G{A^{-1}_Z}$ is order reversing.
\end{thm}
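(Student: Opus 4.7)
The plan is to derive $A^*$ via the Gorenstein reflection of Proposition~\ref{prop:reflect}. Since $S=\langle a,b\rangle$ is symmetric, the semigroup ring $R=k[[t^a,t^b]]$ is a one-dimensional analytically irreducible residually rational Gorenstein ring; applied to any monomial fractional ideal with value semigroup $A$, Proposition~\ref{prop:reflect} yields
\[
A^*=\{z\in\mathbb{Z}\mid F-z\notin A\},\qquad F=ab-a-b.
\]
Pulling this back through $\psi$, with $F_Z:=\overline{(b-1,-1)}$, this says $\psi^{-1}(A^*)$ consists of exactly those $c\in Z$ with $F_Z-c\notin A_Z$.

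The key observation is then that the bijection $c\mapsto F_Z-c$ sends minimal generators of $\psi^{-1}(A^*)$ (those $c$ with $c-\overline{(1,0)},c-\overline{(0,1)}\notin \psi^{-1}(A^*)$) onto the \emph{maximal elements of $Z\setminus A_Z$}: those $r\notin A_Z$ with $r+\overline{(1,0)},r+\overline{(0,1)}\in A_Z$. So the problem reduces to enumerating the maximal elements of $Z\setminus A_Z$. I expect to show these are precisely $r_i:=\overline{(x_{i+1}-1,y_i-1)}$ for $i=1,\ldots,n-1$, together with the wrap-around element $r_n:=\overline{(x_1+b-1,y_n-1)}$.

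I would verify this in two steps. First, for each $r_i$ a direct coordinate computation exhibits $p_{i+1}\leq r_i+\overline{(1,0)}$ and $p_i\leq r_i+\overline{(0,1)}$ (with indices cyclic, using the equivalent lift $p_1=\overline{(x_1+b,y_1-a)}$ in the case $i=n$), so both neighbors lie in $A_Z$; and $r_i\notin A_Z$ follows by ruling out every $\mathbb{Z}^2$-lift of every $p_j$ dominating $r_i$, which is where the tight bounds $x_1<\cdots<x_n<x_1+b$ and $y_1>\cdots>y_n>y_1-a$ from Lemma~\ref{lem:ordering} become essential, as they eliminate shifts by $\pm(b,-a)$. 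Second, to see no other maximal elements exist, I would use that $r\mapsto r+\overline{(1,1)}$ is a bijection between maximal elements of $Z\setminus A_Z$ and outer corners $\MB{A_Z}$; since inner corners $\GZ{A_Z}$ and outer corners $\MB{A_Z}$ alternate around the cyclically ordered boundary $\B{A_Z}$, one has $|\MB{A_Z}|=|\GZ{A_Z}|=n$, so $r_1,\ldots,r_n$ is a complete list.

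Finally, I compute $\psi(F_Z-r_i)=\psi(\overline{(b-x_{i+1},-y_i)})=ab-ax_{i+1}-by_i$ for $i=1,\ldots,n-1$, and $\psi(F_Z-r_n)=\psi(\overline{(-x_1,-y_n)})=-ax_1-by_n$, matching the stated generators. The main obstacle I anticipate is the wrap-around case: showing $r_n\notin A_Z$ requires ruling out each of the infinitely many lifts of every $p_j$, and the bookkeeping to justify rigorously that inner and outer corners alternate around the cyclic boundary also requires some care.
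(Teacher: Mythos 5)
Your proposal is correct and follows essentially the same route as the paper: both apply the Gorenstein reflection of Proposition~\ref{prop:reflect} to get $A^*=\{z\mid F-z\notin A\}$, identify the generators of $A^*_Z$ with $\overline{(b-1,-1)}-p$ for $p$ ranging over the maximal elements of $Z\setminus A_Z$ (your $r_i$, which are the paper's $m_i-\overline{(1,1)}$ with $m_i\in\MB{A_Z}$), locate these corners from the coordinate bounds of Lemma~\ref{lem:ordering}, and compute the images under $\psi$. The extra verifications you flag (the wrap-around lift and the alternation of inner and outer corners) are exactly the points the paper handles via its lattice diagram and the rectangle with corners $(x_1,y_1)$ and $(x_1+b,y_1-a)$.
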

\begin{proof} The diagram below illustrates points in $\mathbb{Z}^2$ that represent 
elements of $A_Z$. 

\setlength{\unitlength}{.5mm}
\begin{picture}(140,140) 
\put(-1, 134){\circle*{2}}
\put(5, 134){\circle*{2}}
\put(5,131){\tiny{$m_n$}}
\put(5,128){\circle*{2}}
\put(4,118){$\vdots$}
\put(10,118){$\vdots$}
\put(5,114){\circle*{2}}
\put(11,114){\circle*{2}}
\multiput(14,116)(2,2){3}
{$\cdot$}
\put(5,114){\circle*{2}}
\put(5,108){\circle*{2}}
\put(5,105){\tiny{$p_1$}}
\put(11,108){\circle*{2}}
\put(14,106.5){$\cdots$}
\put(14,112.5){$\cdots$}
\put(25, 108){\circle*{2}}
\put(31, 108){\circle*{2}}

\put(31,105){\tiny{$m_1$}}
\put(31,102){\circle*{2}}
\put(30,92){$\vdots$}
\put(36,92){$\vdots$}
\put(31,88){\circle*{2}}
\put(31,88){\circle*{2}}
\put(37,88){\circle*{2}}
\multiput(40,90)(2,2){3}
{$\cdot$}
\put(31,82){\circle*{2}}
\put(31,79){\tiny{$p_2$}} 
\put(37,82){\circle*{2}}
\put(40,80.5){$\cdots$}
\put(40,86.5){$\cdots$}
\put(51, 82){\circle*{2}}
\put(57, 82){\circle*{2}}
\put(57,79){\tiny{$m_2$}}
\put(57, 76){\circle*{2}}

\put(50,62){$\ddots$}
\put(58.5,55){$\ddots$}

\put(73, 62){\circle*{2}}
\put(79, 62){\circle*{2}}
\put(79,59){\tiny{$m_{n-1}$}}
\put(79,56){\circle*{2}}
\put(78,46){$\vdots$}
\put(84,46){$\vdots$}
\put(79,42){\circle*{2}}
\put(85,42){\circle*{2}}
\multiput(88,44)(2,2){3}
{$\cdot$}
\put(79,42){\circle*{2}}
\put(79,36){\circle*{2}}
\put(79,33){\tiny{$p_n$}}
\put(85,36){\circle*{2}}
\put(88,34.5){$\cdots$}
\put(88,40.5){$\cdots$}
\put(99, 36){\circle*{2}}
\put(105, 36){\circle*{2}}

\put(105,33){\tiny{$m_n$}}
\put(105,30){\circle*{2}}
\put(104,20){$\vdots$}
\put(110,20){$\vdots$}
\put(105,16){\circle*{2}}
\put(105,16){\circle*{2}}
\put(111,16){\circle*{2}}
\multiput(114,18)(2,2){3}
{$\cdot$}
\put(105,10){\circle*{2}}
\put(105,7){\tiny{$p_1$}} 
\put(111,10){\circle*{2}}
\put(114,8.5){$\cdots$}
\put(114,14.5){$\cdots$}
\put(125, 10){\circle*{2}}
\put(131, 10){\circle*{2}}
\put(131,7){\tiny{$m_1$}}
\put(131, 4){\circle*{2}}
\end{picture}\\
The coset representatives of $p_i$ are labeled by $p_i$. Similarly the elements of $\mathbb{Z}^2$ labeled by $m_1,\hdots ,m_n$ correspond to the elements of $\MB{A_Z}$. 

Let $(x_1,y_1)$ be a point in $\mathbb{Z}^2$ such that $p_1=\overline{(x_1,y_1)}$. Consider the rectangle whose corners are $(x_1,y_1)$ and $(x_1+b,y_1-a)$, which are adjacent coset representatives of $p_1$. 
Then for each $i$ in $\{2,\hdots , n\}$ there is a unique point $(x_i,y_i)$ inside the rectangle that is a coset representative $p_i$. The ordering on the integers $x_i$ and $y_i$ follows from Lemma \ref{lem:ordering}.
For all $i$ in $\mathbb{Z}/n\mathbb{Z}$ any point labeled $m_i$ has the same $x$-position as the point labeled $p_{i+1}$ below it and the same $y$-position as the point labeled $p_i$ to its left.  It follows that 
$m_i=\overline{(x_{i+1},y_i)}$ for $i=1,\hdots ,n-1$ and that $m_n=\overline{(x_1+b,y_n)}$.  
%Since 
%$\GZ{A_Z}$ is a minimal generating set, it follows that $p_i\notin p_{i+1}+S_Z$ and $p_{i+1}\notin p_i+S_Z$ for all 
%$i\in\mathbb{Z}/n\mathbb{Z}$. Therefore, $\Pk{p_i}{p_{i+1}}=\overline{(x_{i+1},y_i)}=m_i$ for all $i\in\{1,\hdots ,n-1\}$ and $\Pk{p_n}{p_1}=\overline{(x_1+b,y_n)}=m_n$.  

Note that $F=ab-a-b$ and $\psi^{-1}(F)=\overline{(b-1,-1)}$. 
By Proposition \ref{prop:reflect}, $A^*=\{F-z\in\mathbb{Z}|\ z\notin A\}$; hence
$A^*_Z=\{\overline{(b-1,-1)}-p\in Z|\ p\notin A_Z\}$.
The generators of $A^*_Z$ are the elements $\overline{(b-1,-1)}-p$ such that $p$ is not in $A_Z$ and $p+\overline{(0,1)}$ and $p+\overline{(1,0)}$ are in $A_Z$. These are elements of the form $\overline{(b-1,-1)}-p$ such that $p=m_i-\overline{(1,1)}$.  Thus $\GZ{A^*_Z}$ consists of the elements 
$\overline{(b-1,-1)}-(m_i-\overline{(1,1)})=\overline{(b,0)}-m_i$ for $i=1,\hdots ,n$. For $i=1,\hdots,n-1$ we have 
$\psi(\overline{(b,0)}-m_i)=\psi\overline{(b-x_{i+1},-y_i)}=ab-ax_{i+1}-by_i$ and $\psi(\overline{(b,0)}-m_n)=\psi\overline{(-x_1,-y_n)}=-ax_1-by_n$.
\end{proof}

\begin{prop}\label{prop:numgen}
Suppose that $R=k[x^a,x^b]$ or that $R$ is a one-dimensional analytically irreducible residually rational ring with maximal ideal $\m=(t^a,t^b)$. Let $I$ be monomial ideal of $R$.  Then $I\otimes_R I^*$ has a minimal generating set such that at least $2\mu(I)-2$ of the generators are torsion elements. 
\end{prop}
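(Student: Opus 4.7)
The plan is to reduce the proposition to the inequality $\mu(II^*) \leq n^2 - 2n + 2$, where $n = \mu(I)$, and then to prove that inequality by a direct calculation with the generators of $I^*$ given by Theorem~\ref{thm:inverse}. By that theorem $v(I^*)$ has exactly $n$ minimal generators, so $\mu(I^*) = n$, and since tensoring with $R/\m$ multiplies numbers of generators, $\mu(I \otimes_R I^*) = n^2$. Applying $-\otimes_R R/\m$ to the short exact sequence
\[
0 \to \T(I \otimes_R I^*) \to I \otimes_R I^* \xrightarrow{\pi} II^* \to 0
\]
yields a right-exact sequence in which the image of $\T(I \otimes_R I^*)/\m\T(I \otimes_R I^*)$ inside $(I \otimes_R I^*)/\m(I \otimes_R I^*)$ is precisely the kernel of the surjection onto $II^*/\m II^*$, and so has $k$-dimension $n^2 - \mu(II^*)$. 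Lifting a basis of that image to torsion elements and extending to a $k$-basis of $(I \otimes_R I^*)/\m(I \otimes_R I^*)$ produces, by Nakayama, a minimal generating set of $I \otimes_R I^*$ with $n^2 - \mu(II^*)$ torsion members. Thus the proposition follows once the inequality is established.

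For the inequality, take $p_1,\ldots,p_n$ to be the cyclically ordered minimal generators of $v(I)_Z$ with $p_i = \overline{(x_i,y_i)}$ as in Lemma~\ref{lem:ordering}, and let $q_1,\ldots,q_n$ be the minimal generators of $v(I^*)$ supplied by Theorem~\ref{thm:inverse}: $q_n = -ax_1 - by_n$ and $q_i = ab - ax_{i+1} - by_i$ for $1 \leq i < n$. A direct substitution gives
\[
p_i + q_i = a\bigl(b - (x_{i+1}-x_i)\bigr) \ (i < n), \qquad p_n + q_n = a(x_n - x_1),
\]
\[
p_{i+1} + q_i = b\bigl(a - (y_i - y_{i+1})\bigr) \ (i < n), \qquad p_1 + q_n = b(y_1 - y_n).
\]
The orderings $x_1 < \cdots < x_n < x_1+b$ and $y_1 > \cdots > y_n > y_1-a$ force each parenthesised factor to be a positive integer strictly less than $b$ or $a$, respectively. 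Hence on the two disjoint families of index pairs
\[
D_a = \{(i,i) : 1 \leq i \leq n\}, \qquad D_b = \{(i+1,i) : 1 \leq i < n\} \cup \{(1,n)\},
\]
the values $p_i + q_j$ are positive multiples of $a$ and of $b$ respectively; together $D_a$ and $D_b$ account for $2n$ of the $n^2$ pairs.

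The set $v(II^*) = v(I) + v(I^*)$ is closed under addition by $a$ (since $a \in v(R)$), so its multiples of $a$ form an upward-closed chain $\{am_0, a(m_0+1), \ldots\}$; only the least element $am_0$ can be a minimal generator of $v(II^*)$, because every larger multiple lies in $a + v(II^*)$. Consequently at most one of the $n$ values coming from $D_a$ lies in $\G{v(II^*)}$, and similarly at most one from $D_b$. Any $z \in \G{v(II^*)}$ has the form $p_i + q_j$ (if $z = p_i+q_j+s$ with $0 \neq s \in v(R)$, then $z - a \in v(II^*)$ or $z - b \in v(II^*)$, contradicting minimality), so the remaining minimal generators of $v(II^*)$ must arise from the $n^2 - 2n$ pairs outside $D_a \cup D_b$. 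This yields
\[
\mu(II^*) = |\G{v(II^*)}| \leq 2 + (n^2 - 2n) = n^2 - 2n + 2,
\]
as desired. The main computational hurdle is the verification of the four displayed identities for $p_i + q_j$; once these identifications are in place the argument is bookkeeping using the closure of $v(II^*)$ under addition by $a$ and $b$.
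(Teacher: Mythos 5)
Your proposal is correct and follows essentially the same route as the paper: the same $2n$ sums $p_i+q_i$ and $p_{i+1}+q_i$ computed from Theorem \ref{thm:inverse}, and the same observation that the multiples of $a$ (resp.\ $b$) among them contribute at most one minimal generator of $v(II^*)$, giving at least $2n-2$ redundant products. Your opening reduction via Nakayama merely spells out the step the paper leaves implicit, namely that each product $t^{a_i+b_j}$ failing to be a minimal generator of $II^*$ can be traded for a torsion element in a minimal generating set of $I\otimes_RI^*$.
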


\begin{proof}
Let $A$ equal $v(I)$ or $\deg(I)$ depending on our choice of $R$. By Theorem \ref{thm:inverse} we have 
$A^*=(-ax_1-by_n, ab-ax_i-by_{i-1}|\ i=2,\hdots,n)$
for some integers $x_1<\hdots<x_n<x_1+b$ and $y_1>\hdots>y_n>y_1-a$. Notice that
$(-ax_1-by_n)+(ax_1+by_1)=b(y_1-y_n)$ and $(ab-ax_i-by_{i-1})+(ax_i+by_i)=b(a-(y_{i-1}-y_i))$. 
The fractional ideal $(b(y_1-y_n), b(a-(y_{i-1}-y_i))|\ i=2,\hdots,n)$ is principal.
Similarly  $(-ax_1-by_n)+(ax_n+by_n)=a(x_n-x_1)$, 
$(ab-ax_i-by_{i-1})+(ax_{i-1}+by_{i-1})=a(b-(x_{i}-x_{i-1}))$ and 
 $(a(x_n-x_1),a(b-(x_{i}-x_{i-1}))|\ i=2,\hdots,n)$ is principal. It follows that 
there exist at least $2n-2$ elements of $\G{A}+\G{A^*}$, which are not in $\G{A+A^*}$. Consequently, at least $2n-2$ of the generators of $I\otimes_RI^{-1}$ are in the kernel of the map
$\pi:I\otimes_RI^{-1}\to II^{-1}$. Since $\ker(\pi)=\T(I\otimes_RI^{-1})$, the result follows.
\end{proof}

\begin{question}
Suppose that $R$ is a one-dimensional hypersurface domain and that $M$ is rank $r$ torsion-free $R$-module.  Considering Propostion \ref{prop:numgen} and the fact that $\T(M\otimes_RM^*)\neq 0$ when $M$ is not free, we are led to the following question. Does the inequality $\mu(\T(M\otimes_RM^*)\geq 2(\mu(M)-r)$ hold in general? A positive answer in the case where $M$ is graded or simply an ideal would also be interesting.
\end{question}

\begin{proof}[Proof of Theorem \ref{thm:semigroup}] Fix generating sets $\G{A}=\{a_1,\hdots ,a_m\}$ and $\GZ{A_Z}=\{p_1,\hdots ,p_m\}$
such that $\psi(p_i)=a_i$  and $p_{i+1}$ follows $p_i$ in $\GZ{A_Z}$ for all $i$ in $\mathbb{Z}/m\mathbb{Z}$.
Also let $\G{B}=\{b_1,\hdots ,b_n\}$ and $\GZ{B_Z}=\{q_1,\hdots ,q_n\}$
such that $\psi(q_j)=b_j$ and $q_{j+1}$ follows $q_j$ in $\GZ{B_Z}$ for all $j$ in $\mathbb{Z}/n\mathbb{Z}$.   

We define a function $\delta:\mathbb{Z}/m\mathbb{Z}\times\mathbb{Z}/n\mathbb{Z}\to A\otimes_S B$ and show that it is injective.  Since 
$\delta$ is injective, we have 
\[
|\delta^{-1}(\chi^{-1}(z))|\leq|\chi^{-1}(z)|\leq \tau_z(A,B)+1.
\]
Furthermore we show that for all $c\otimes d$ in $\im(\delta)$ there exist distinct elements $e\otimes f$ and $c\otimes d$ in $A\otimes_SB$ with $c+d=e+f$. Therefore for all $z\in\im(\delta)$ we have $\tau_z(A,B)\neq 0$. Let $H=\{z\in\mathbb{Z}|\ \tau_z(A,B)\neq 0\}$.
 %$\frac{1}{2}|\delta^{-1}(\chi^{-1}(z))|\leq \tau_z(A,B)$ for all $z\in\mathbb{Z}$. 
From this we deduce the statement of the Theorem:
\[
\textstyle\tau(A,B)+|H|=\sum_{z\in H}\tau_z(A,B)+1\geq \sum_{z\in H}|\delta^{-1}(\chi^{-1}(z))|= \mu(A)\mu(B).
\]
Since $\tau(A,B)\geq |H|$, it follows that $\tau(A,B)\geq\frac{1}{2}\mu(A)\mu(B)$.

In each of the following six cases we assume conditions on elements $(i,j)$ in 
$\mathbb{Z}/m\mathbb{Z}\times\mathbb{Z}/n\mathbb{Z}$. In each case we assume that none of the previous cases occurs. 

{\bf Case 1}: Suppose $a_i+b_j$ is not an element of $\G{A+B}$. Then there exists 
$u\neq i$ and $v\neq j$ such that $a_i+b_j=a_u+b_v+s$ 
for some $s$ in $S$.  Since $a_i$ and $b_j$ are minimal generators, it follows that $\delta(i,j):=a_i\otimes b_j\neq a_u\otimes (b_v+s)$ in $A\otimes_SB$.

For the remainder of the proof suppose that $a_i+b_j$ is an element of $\G{A+B}$. Let $C$ be the relative ideal generated by $\G{A+B}\diagdown\{a_i+b_j\}$.

{\bf Case 2:} Suppose there exists $(u,v)\neq(i,j)$ such that $a_u+b_v$ is not in $C$. In this case 
$a_u+b_v=a_i+b_j+s$ for some $s$ in $S$.  Let $\delta(i,j)=a_i\otimes(b_j+s)$. Since $a_u$ and $b_v$ are minimal generators, we have that $a_u\otimes b_v\neq a_i\otimes (b_j+s)$ in $A\otimes_SB$.

%The following picture illustrates this case. The unlabeled points are coset representatives in 
%$\mathbb{Z}^2$ of the elements adjacent to $p_i+q_j$ in $\G{A_Z+B_Z}$.\\
%\setlength{\unitlength}{.5mm}
%\begin{picture}(90,40) 
%\put(28, 33){\circle*{2}}
%\multiput(30,33)(4,0){15} 
%{\line(1,0){2}}
%\put(63.5,23){\circle*{2}}
%\put(65.5,20){\tiny{$p_u+q_v$}}
%\put(87.5,5){\circle*{2}}
%\multiput(87.5,7)(0,4){7} 
%{\line(0,1){2}}
%\put(50,13){\circle*{2}}
%\put(32,10){\tiny{$p_i+q_{j}$}}
%\put(50,13){\line(1,0){38}}
%\put(50,13){\line(0,1){20}}
%\end{picture}\\
%In general a representative of $p_u+q_v$ must occur at point in the rectangle above excluding any boundary points along the dashed lines, otherwise $p_u+q_v$ would be in $C$.\\ 

Fix $g$ and $h$ such that $p_g+q_h$ precedes $p_i+q_j$ in $\G{A_Z+B_Z}$.

{\bf Case 3:} Suppose $g\neq i$ and $h\neq j$. Choose integers $x_1$, $x_2$, $y_1$ and $y_2$ with $x_1<x_2<x_1+b$ and $y_1>y_2>y_1-a$, such that $p_g+q_h=\overline{(x_1,y_1)}$ and $p_i+q_j=\overline{(x_2,y_2)}$.  Then 
$m:=\overline{(x_2,y_1)}$ is in $\MB{A_Z+B_Z}$.  Let $c=x_2-x_1$ and $d=y_1-y_2$.  Then 
$\psi(m)=a_g+b_h+ca=a_i+b_j+db$.

The following graph illustrates the relative positions of the elements $(x_1,y_1)$ and $(x_2,y_2)$ in the plain. Each point is labeled both by its coordinates and by the element of $Z$ that it represents. 

\setlength{\unitlength}{.5mm}
\begin{picture}(90,40) 
\put(25, 33){\circle*{2}}
\put(6,30){\tiny{$(x_1,y_1)$}}
\put(7,36){\tiny{$p_g+q_h$}}
\multiput(27,33)(4,0){10} 
{\line(1,0){2}}
\put(67.5,33){\circle*{2}}
\put(69.5,30){\tiny{$(x_{2},y_{1})$}}
\put(69.5,35){\tiny{$m$}}

\put(67.5,5){\circle*{2}}
\put(69.5,2){\tiny{$(x_{2},y_{2})$}}
\put(69.5,7){\tiny{$p_i+q_{j}$}}

\multiput(67.5,7)(0,4){7} 
{\line(0,1){2}}
\end{picture}

We wish to show that $a_g\otimes (b_h+ca)\neq a_i\otimes (b_j+db)$ in $A\otimes_S B$.
We claim that none of the elements of the form $p_u+q_h$ with $u\neq g$ or $p_g+q_v$ with $v\neq h$ are on the interval $[p_g+q_h,p_i+q_j]_{A+B}$; see Definition \ref{def:interval}.  None of these elements are on the interval $(m,p_i+q_j]_{A+B}$ because otherwise we would be in Case 2. By the minimality of the generating sets, it follows that 
$p_u$ is not in $p_g+S_Z$ and $q_v$ is not in $q_h+S_Z$.  Therefore, 
$p_u+q_h$ and $p_g+q_v$ are not in $p_g+q_h+S_Z$.  It follows that none of the elements in question are on the interval $[p_g+q_h,m]_{A+B}$.  From this we conclude that 
$a_g\otimes (b_h+ca)\neq a_u\otimes (b_h+s)$ and $a_g\otimes (b_h+ca)\neq a_g\otimes (b_v+s)$ for any $u\neq g$, $v\neq h$ and $s$ in $S$.
%$m\notin p_u+q_h+S_Z$ for all $u\neq g$ and $m\notin p_g+q_v+S_Z$ for all $v\neq h$.  
%Therefore, $a_g+b_h+ca=\psi(m)$ is not in $a_u+b_h+S$ or $a_g+b_v+S$ for any $u\neq g$ or any 
%$v\neq h$.
Consequently, 
\[
\textstyle\delta(i,j):=a_i\otimes (b_j+db)\neq a_g\otimes (b_h+ca)\in A\otimes_S B.
\]

Fix $\ell$ and $r$ such that $p_{\ell}+q_r$ follows $p_i+q_j$ in $\G{A_Z+B_Z}$.

{\bf Case 4:} Suppose that $\ell\neq i$ and $r\neq j$. Then there exist positive integers $c$ and $d$ simultaneously minimal such that $a_i+b_j+ca=a_{\ell}+b_r+db$. The proof that 
\[
\delta(i,j):=a_i\otimes (b_j+ca)\neq a_{\ell}\otimes(b_r+db)\in A\otimes_S B
\]
mirrors the argument in Case 3.

{\bf Case 5:} Suppose $g=i$. 
Since $A+B$ is not principle, it follows that $h\neq j$. We will show that 
$h=j-1$. The elements $p_i+q_h$, $p_i+q_{j-1}$ and $p_i+q_j$ are minimal generators of $p_i+B_Z$. Since $q_{j-1}$ precedes $q_{j}$ in $\G{A_Z}$, it follows that $p_i+q_{j-1}$ precedes $p_i+q_j$ in $\G{p_i+B_Z}$. Applying Lemma \ref{lem:ordering} to the generators of $p_i+B_Z$ we may choose integers $x_h\leq x_{j-1}<x_j<x_h+b$ and $y_h\geq y_{j-1}>y_j>y_h-a$ such that 
$p_i+q_h=\overline{(x_h,y_h)}$, $p_i+q_{j-1}=\overline{(x_{j-1},y_{j-1})}$ and $p_i+q_j=\overline{(x_j,y_j)}$. 
However, since $p_i+q_h$ precedes $p_i+q_j$ in $\G{A_Z+B_Z}$, Lemma \ref{lem:ordering} implies that  $p_i+q_{j-1}$ is not in $\alpha+S$ for any $\alpha$ in $\G{A_Z+B_Z}\setminus\{p_i+q_h\}$; hence $p_i+q_{j-1}$ is an element of  $p_i+q_h+S$ and $h=j-1$.

{\bf Case 5.1:} Suppose that $p_{i+1}+q_{j-1}$ is an element of  $p_i+q_j+S_Z$. Then there exists 
$s$ in $S$ such that $a_i+b_j+s=a_{i+1}+b_{j-1}$. Since $a_{i+1}$ and $b_{j-1}$ are both minimal generators, it follows that 
\[
\delta(i,j):=a_i\otimes(b_j+s)\neq a_{i+1}\otimes b_{j-1}\in A\otimes_SB.
\]

The graph below indicates the relative positions of the points involved in this case. In general a coset representative of $p_{i+1}+q_{j-1}$ will occur at some point in the enclosed region below, which excludes the dashed line.\\ 
\setlength{\unitlength}{.5mm}
\begin{picture}(90,37) 
\put(25, 28){\circle*{2}}
\put(1,31){\tiny{$p_i+q_{j-1}$}}
\multiput(27,28)(4,0){19} 
{\line(1,0){2}}
\put(102,27.7){$\hdots$}
\put(50,3){\circle*{2}}
\put(32,6){\tiny{$p_i+q_{j}$}}
\put(50,3){\line(1,0){51}}
\put(102,2.9){$\hdots$}
\put(50,3) {\line(0,1){25}}
\put(70,15){\circle*{2}}
\put(72,16){\tiny{$p_{i+1}+q_{j-1}$}}
\end{picture}

{\bf Case 5.2:} Suppose $p_{i+1}+q_{j-1}$ is not in $p_i+q_j+S_Z$.
Consider the relative ideal
\[
D_Z:=(p_i+q_{j-1},\ p_i+q_j,\ p_{i+1}+q_v|\ v\in\mathbb{Z}/n\mathbb{Z}\ )
\]
of $S_Z$. We have that $p_i+q_{j-1}$, $p_i+q_j$ and $p_{i+1}+q_{j-1}$  are elements of  $\G{D_Z}$ such that $p_i+q_j$ is in $[p_{i}+q_{j-1},p_{i+1}+q_{j-1}]_D$. Choose $z$ in $\mathbb{Z}/n\mathbb{Z}$ such that $p_{i+1}+q_z$ is the first element of the form 
$p_{i+1}+q_v$ with $v$ in $\mathbb{Z}/n\mathbb{Z}$ to occur after 
$p_i+q_j$ in $\B{D_Z}$. 
Then $p_i+q_j$ is an element of $[p_{i}+q_{j-1},p_{i+1}+q_{z}]_D$.
%Then $p_i+q_{j-1}$, $p_i+q_j$, $p_{i+1}+q_z$ and $p_{i+1}+q_{j-1}$ occur in order in $\B{D_Z}$, possibly with $z=j-1$.
% Note that $p_{i+1}+q_z$ might not be a minimal generator of $D_Z$.
% since it could be that $p_{i+1}+q_z=p_i+q_j+\overline{(c,0)}$ for some $c\in\mathbb{N}$. 
Let 
\[
E_Z:=(p_i+q_j,\ p_u+q_{z}|\ u\in\mathbb{Z}/m\mathbb{Z}\ )
\] 
be a relative ideal of $S_Z$. Now  it follows that $p_i+q_j$ is in $[p_i+q_{z}, p_{i+1}+q_{z}]_E$, since the shift from $\varphi(p_{i+1}+q_{z})$ to $\varphi(p_{i+1}+q_{j-1})$ in $\mathbb{R}/\mathbb{Z}$ is the same as the shift from  $\varphi(p_{i}+q_{z})$ to $\varphi(p_{i}+q_{j-1})$; see Definition \ref{def:varphi}.

We may choose integers $x_0<x_1<x_2<x_0+b$, and $y_0>y_1\geq y_2>y_0-a$ such that
$p_i+q_z=\overline{(x_0,y_0)}$, $p_i+q_j=\overline{(x_1,y_1)}$ and $p_{i+1}+q_z=\overline{(x_2,y_2)}$.  
Let $m=\overline{(x_2,y_1)}$, $c=x_2-x_1$ and $d=y_1-y_2$.  Then we define $\delta(i,j)=a_i\otimes(b_j+ca)$.  
%We claim that 
%$m\notin p_u+q_z+S_Z$ for $u\neq i+1$ and $m\notin p_{i+1}+q_v+S_Z$ for $v\neq z$. 
By our choice of $z$ we have $p_{i+1}+q_v$ is not in $[p_i+q_j,p_{i+1}+q_z]_E$ for all $v\neq z$; hence $m$ is not in $p_{i+1}+q_v+S_Z$ for $v\neq z$. We claim that $m$ is not in $p_u+q_z+S_Z$ for $u\neq i+1$ consider the following diagrams, which encompass the possible relationships between $(x_0,y_0)$, $(x_1,y_1)$ and $(x_2,y_2)$.  The dashed line corresponds to the preimage of $\B{E_Z}$ in $\mathbb{Z}^2$.

\setlength{\unitlength}{.5mm}
\begin{picture}(110,45) 
\put(25, 33){\circle*{2}}
\put(6,30){\tiny{$(x_0,y_0)$}}
\put(7,36){\tiny{$p_i+q_z$}}
\multiput(27,33)(4,0){5} 
{\line(1,0){2}}
\multiput(25,35)(0,4){3} 
{\line(0,1){2}}

\put(45, 17){\circle*{2}}
\put(26,14){\tiny{$(x_1,y_1)$}}
\put(27,20){\tiny{$p_i+q_j$}}
\multiput(47,17)(4,0){5} 
{\line(1,0){2}}
\multiput(45,19)(0,4){4} 
{\line(0,1){2}}

\put(67,5){\circle*{2}}
\put(48,2){\tiny{$(x_2,y_2)$}}
\put(43,7){\tiny{$p_{i+1}+q_z$}}
\multiput(67,7)(0,4){3} 
{\line(0,1){2}}
\multiput(69,5)(4,0){6} 
{\line(1,0){2}}

\put(67,17){\circle*{2}}
\put(69,19){\tiny{$m$}}
\put(69,15){\tiny{$(x_2,y_1)$}}

\end{picture}
\setlength{\unitlength}{.5mm}
\begin{picture}(110,45) 
\put(25, 28){\circle*{2}}
\put(6,25){\tiny{$(x_0,y_0)$}}
\put(7,31){\tiny{$p_i+q_z$}}
\multiput(27,28)(4,0){5} 
{\line(1,0){2}}
\multiput(25,30)(0,4){3} 
{\line(0,1){2}}

\put(45, 12){\circle*{2}}
\put(26,9){\tiny{$(x_1,y_1)$}}
\put(27,15){\tiny{$p_i+q_j$}}
\multiput(47,12)(4,0){10} 
{\line(1,0){2}}
\multiput(45,14)(0,4){4} 
{\line(0,1){2}}

\put(72,12){\circle*{2}}
\put(64,15){\tiny{$m=p_{i+1}+q_z$}}
\put(52,7){\tiny{$(x_2,y_1)=(x_2,y_2)$}}
\end{picture}

%The dashed lines occur along coset representatives of elements of $\B{E_Z}$. 
Since all points of the form $p_u+q_z$ with $u\neq i,i+1$ occur outside of the interval $[p_i+q_z,p_{i+1}+q_z]_E$,
it follows that $m$ is not in $p_u+q_z+S_Z$ for $u\neq i+1$. Therefore,
\[
\delta(i,j)=a_i\otimes(b_j+ca)\neq a_{i+1}\otimes(b_z+db)\in A\otimes_SB.
\]

{\bf Case 6:} Suppose $h=j$.  The construction for this case is similar to Case 5. We get that $p_{i-1}+q_{j}$ is the generator of  $A_Z+B_Z$ which precedes $p_i+q_j$ in $\G{A+B}$.

{\bf Case 6.1:} Suppose that $p_{i-1}+q_{j+1}$ is an element of $p_i+q_j+S_Z$. Then there exists 
$s$ in $S$ such that $a_i+b_j+s=a_{i-1}+b_{j+1}$.  Also, 
\[
\delta(i,j):=a_i\otimes(b_j+s)\neq a_{i-1}\otimes b_{j+1}\in A\otimes_SB.
\]

{\bf Case 6.2:} Suppose $p_{i-1}+q_{j+1}$ is not in $p_i+q_j+S_Z$. Given the relative ideal
\[
D_Z=(p_{i-1}+q_{j},\ p_i+q_j,\ p_{u}+q_{j+1}|\ u\in\mathbb{Z}/m\mathbb{Z}\ ),
\]
choose $z$ in $\mathbb{Z}/m\mathbb{Z}$ such that $p_z+q_{j+1}$ is the first element of the form 
$p_v+q_{j+1}$ with $v$ in $\mathbb{Z}/m\mathbb{Z}$ to occur after 
$p_i+q_j$ in $\B{D_Z}$. 
Choose integers $x_1,\ x_2,\ y_1$ and $y_2$ with $x_1<x_2<x_1+b$ and $y_1\geq y_2>y_1-a$, such that $p_i+q_j=\overline{(x_1,y_1)}$ and $p_z+q_{j+1}=\overline{(x_2,y_2)}$.  
Let $c=x_2-x_1$ and $d=y_1-y_2$. Then 
\[
\delta(i,j):=a_i\otimes(b_j+ca)\neq a_{z}\otimes(b_{j+1}+db)\in A\otimes_SB.
\]

{\bf Injectivity of $\delta$:} It remains to show that $\delta$ is injective.  In Case 1 we have $\delta(i,j)=a_i\otimes b_j\neq a_u\otimes b_v+s$ for all $(u,v)$ in $\mathbb{Z}/m\mathbb{Z}\times\mathbb{Z}/n\mathbb{Z}$ with $(u,v)\neq(i,j)$ and $s$ in $S$. 
Therefore, $\delta^{-1}(a_i\otimes b_j)=\{(i,j)\}$. In Cases $2, 4, 5$ and 
$6$ we have shown that $\delta(i,j)=a_i\otimes (b_j+s)$  such that there are integers $x_1<x_3\leq x_2$ and $y_1>y_2\geq y_3>y_1-a$ where $p_g+q_h=\overline{(x_1,y_1)}$, 
$\psi^{-1}(a_i+b_j+s)=\overline{(x_2,y_2)}$ and $p_i+q_j=\overline{(x_3,y_3)}$.   The following graph illustrates the relationship of these points.

\setlength{\unitlength}{.5mm}
\begin{picture}(90,37) 
\put(25, 28){\circle*{2}}
\put(5,31){\tiny{$p_g+q_{h}$}}
\put(3,25){\tiny{$(x_1,y_1)$}}
\multiput(27,28)(4,0){22} 
{\line(1,0){2}}
\put(50,3){\circle*{2}}
\put(32,6){\tiny{$p_i+q_{j}$}}
\put(28,0){\tiny{$(x_3,y_3)$}}
\put(50,3){\line(1,0){63}}
\put(50,3) {\line(0,1){25}}

\put(70,15){\circle*{2}}
\put(72,17){\tiny{$\psi^{-1}(a_i+b_j+s)$}}
\put(72,11){\tiny{$(x_2,y_2)$}}
\end{picture}\\
%Among these cases $(x_2,y_2)$ could occur at any lattice point inside the enclosed region with the exception of points along the dashed line. 
Since the rectangular regions represented above containing are non-overlapping as $(i,j)$ varies, it follows that if $(i,j)$ and $(i',j')$ are distinct and each fit the criteria for either Case $2,\ 4,\ 5$ or $6$, then 
\[
\psi^{-1}\circ\chi\circ\delta(i,j)\neq \psi^{-1}\circ\chi\circ\delta(i',j');
\]
hence 
$\delta(i,j)\neq\delta(i',j')$. 
When $(i,j)$ is in Case 3 and $(i',j')$ is not in Case 1, if 
\[
\psi^{-1}\circ\chi\circ\delta(i,j)=\psi^{-1}\circ\chi\circ\delta(i',j'),
\]
then 
$(i',j')$ is in Case 4 and $(i',j')=(g,h)$, where $p_g+q_h$ precedes $p_i+q_j$ in $\B{A_Z+B_Z}$.  In this case we have shown that
\[
\delta(i,j)=a_i\otimes (b_j+db)\neq a_g\otimes (b_h+ca)=\delta(g,h).
\]
Therefore, $\delta$ is injective and the result follows.
\end{proof}

%
%
%\section{Vanishing}
%
%\begin{prop} \label{prop:tauVanishing}
%Let $S$ be a numerical semigroup with relative ideals $A$ and $B$ such that $A$ is minimally generated by $a_1,a_2,\hdots,a_h$.  If 
%\[
%((a_i|i\in P)\cap(a_j|j\notin P))+B=((a_i|i\in P)+B)\cap((a_j|j\notin P)+B)
%\]
%for all $P\subset\{1,\hdots ,h\}$ then $\tau(A,B)=0$.
%\end{prop}
%
%\begin{proof}
%Suppose $\tau(A,B)\neq 0$. Choose $z\in\mathbb{Z}$ so that 
%$\tau_z(A,B)\neq 0$. Let $x=\tau_z(A,B)$.  Let 
%$\chi^{-1}(z)=\{c_0\otimes(z-c_0),c_1\otimes(z-c_1),\hdots , c_x\otimes(z-c_x)\},$
%\[
%C:=(a_i|\ a_i\otimes(z-a_i)=c_0\otimes(z-c_0)\in A\otimes_SB),\quad\textrm{and}\quad
%D:=(a_j|\ a_j\notin C).
%\]
%Given $a_i\in C$ we have $z-a_i\in B$. Hence $z\in C+B$.  Assume that $c_1\in C$. Then $c_1=a_i+s$ for some $a_i\in C$. Thus $c_1\otimes(z-c_1)=a_i\otimes(z-a_i)=c_0\otimes(z-c_0)$. This is a contradiction. So $c_1\notin C$.  It follows that $c_1\in D$ and $z=c_1+(z-c_1)\in D+B$.  Thus,
%$z\in (C+B)\cap(D+B)$.  
%
%Assume that $z\in (C\cap D)+B$.  Then there exists $y\in C\cap D$ such that $z-y\in B$.  Furthermore,
%$y=a_i+s_1=a_j+s_2$ with $a_i\in C$, $a_j\in D$ and $s_1,s_2\in S$. Since $a_j\otimes(z-a_j)\neq c_0\otimes(z-c_0)$, the following sequence of equalities yields a contradiction:
%\[
%c_0\otimes(z-c_0)=a_i\otimes(z-a_i)
%=(a_i+s_1)\otimes(z-y)
%=(a_j+s_2)\otimes(z-y)
%=a_j\otimes(z-a_j)
%\]
%It follows that $z\notin (C\cap D)+B$ and hence $(C\cap D)+B=(C+B)\cap(D+B)$.
%\end{proof}

\section*{Acknowledgments}
During the course of this research the author was partially supported by a GAANN grant from the Department of Education and a GENIL-YTR grant from the University of Granada. I am grateful to Marco D'Anna, Pedro Garc\'ia-S\'anchez, Srikanth Iyengar, Manoj Kummini and Roger Wiegand 
for useful feedback about this research. Srikanth showed me how to simplify some of my early proofs using commutative diagrams. Roger convinced me to keep going when I would have otherwise given up, and Pedro helped me to see that my ideas involving semigroup ideals were significant independent of their relation to torsion. Additionally Roger and Pedro helped me to find financial support during the course of this research.

\end{document}